\providecommand{\U}[1]{\protect\rule{.1in}{.1in}}
\newtheorem{theorem}{Theorem}
\newtheorem{definition}[theorem]{Definition}
\newtheorem{lemma}[theorem]{Lemma}
\newtheorem{proposition}[theorem]{Proposition}
\newtheorem{remark}[theorem]{Remark}
\newenvironment{proof}[1][Proof]{\noindent\textbf{#1.} }{\ \rule{0.5em}{0.5em}}
\renewcommand{\thefootnote}{\fnsymbol{footnote}}
\begin{document}

\author{Tianyang Nie$\;^{\ast}$\bigskip\\{\small Laboratoire de Math\'{e}matiques, Universit\'{e} de Bretagne Occidentale,}\\{\small 29285 Brest C\'{e}dex 3, France}\\{\small ~School of Mathematics, Shandong University, Jinan, Shandong
250100, China}}
\title{A stochastic approach to a new type of parabolic variational inequalities}
\maketitle
\date{}

\begin{abstract}
We study the following quasilinear partial differential equation with two subdifferential operators:
\[
\left\{
\begin{array}
[c]{l}
{\displaystyle\frac{\partial u}{\partial s}(s,x)}+(\mathcal{L}u)(s,x,u(s,x),(\nabla u(s,x))^{\ast}\sigma(s,x,u(s,x)))\medskip\\
\qquad+f(s,x,u(s,x),(\nabla u(s,x))^{\ast}\sigma(s,x,u(s,x)))\in \partial\varphi(u(s,x))+\langle \partial\psi(x),\nabla u(s,x) \rangle, \medskip\\
\qquad\qquad\qquad\qquad\qquad\qquad\qquad\qquad\qquad\qquad\qquad\qquad (s,x) \in[0,T]\times Dom\psi,\medskip\\
u(T,x)=g(x),\quad x\in Dom\psi,
\end{array}
\right.
\]
where for $u\in C^{1,2}\big([0,T]\times Dom\psi\big)$ and $(s,x,y,z)\in [0,T]\times Dom\psi\times Dom\varphi\times\mathbb{R}^{1\times d}$,
\[(\mathcal{L}u)(s,x,y,z):=\frac{1}{2}\sum_{i,j=1}^{n}(\sigma\sigma^{\ast})_{i,j}(s,x,y)\frac{\partial^{2}u}{\partial x_{i}\partial x_{j}}(s,x)
+\sum_{i=1}^{n}b_{i}(s,x,y,z)\frac{\partial u}{\partial x_{i}}(s,x).
\]
The operator $\partial\psi$ (resp. $\partial\varphi$) is the subdifferential of the
convex lower semicontinuous function $\psi:\mathbb{R}^{n}\rightarrow
(-\infty,+\infty]$ (resp. $\varphi:\mathbb{R}\rightarrow(-\infty,+\infty]$).

We define the viscosity solution for such kind of partial differential equations and prove the uniqueness of the viscosity solutions when $\sigma$ does not depend on $y$. To prove the existence of a viscosity solution, a stochastic representation formula of Feymann-Kac type will be developed. For this end, we investigate a fully coupled forward-backward stochastic variational inequality.
\end{abstract}

\footnotetext[1]{{\scriptsize The work of this author is supported
by the Marie Curie ITN Project, \textquotedblleft Deterministic and
Stochastic Controlled Systems and Application\textquotedblright,
FP7-PEOPLE-2007-1-1-ITN, No. 213841-2.}}
\renewcommand{\thefootnote}{\fnsymbol{footnote}}
\footnotetext{\textit{{\scriptsize E-mail addresses:}}
{\scriptsize nietianyang@163.com}}

\textbf{AMS Subject Classification: }60H10, 60H30, 49J40\medskip

\textbf{Keywords:} Forward-backward stochastic differential equations, variational
ine\-qua\-li\-ties, subdifferential operators, viscosity solutions.

\section{Introduction}
Crandall and Lions introduced the notion of viscosity solution in \cite{CL-83}, and in the later work of Crandall, Ishii and Lions \cite{CIL-92}, they gave a systematically investigation of the viscosity solution for second order partial differential equations (PDEs), which provides a powerful tool to study PDEs and related problems. Pardoux and Peng were the first to give a stochastic interpretation
for the viscosity solutions of semilinear PDEs (see \cite{PP-92} and \cite{P-91}) via their original work on nonlinear backward stochastic differential equations (BSDEs), \cite{PP-90}. This relation between BSDEs and PDEs was investigated by different authors. Let us emphasize that Pardoux and Tang \cite{PT-99} studied the link between the solution of fully coupled forward-backward stochastic differential equations (FBSDEs) and the associated quasilinear parabolic PDEs. El Karoui, Kapoudjian, Pardoux, Peng and Quenez \cite{EKPPQ-97} studied reflected BSDEs in one dimensional and by combing it with a forward stochastic differential equation (SDE), they gave a probabilistic interpretation to the viscosity solution of the related obstacle problem for a parabolic PDE. As a generalisation, Cvitani\'{c} and Ma \cite{MC-01} studied reflected FBSDEs and used them to give a probabilistic interpretation for the viscosity solution of quasilinear variational inequalities with a Neumann boundary condition.

On the other hand, related with multi-dimensional reflected SDEs and BSDEs, stochastic variational inequalities (SVIs) were considered by Bensoussan and R\u{a}\c{s}canu in \cite{BR-94,BR-97}, Asiminoaei and R\u{a}\c{s}canu \cite{AR-97} (For more details, the reader is referred to \cite{PR-11}); BSDEs with subdifferential operators (which are called backward stochastic variational inequalities,
BSVIs) were studied by Pardoux and R\u{a}\c{s}canu \cite{PR-98}. Moreover, the authors of \cite{PR-98} obtained a  generalized Feymann-Kac type formula, which gives a probabilistic interpretation for the viscosity solution of parabolic variational inequalities (PVIs). Maticiuc, Pardoux, R\u{a}\c{s}canu and Z\v{a}linescu \cite{MPRZ-10} extended such PVIs to systems of PVIs. In our paper, motivated by \cite{PR-98} and \cite{Z-02}, we consider the following type of PVI
\[
\left\{
\begin{array}
[c]{l}%
{\displaystyle\frac{\partial u}{\partial s}(s,x)}+(\mathcal{L}u)(s,x,u(s,x),(\nabla u(s,x))^{\ast}\sigma(s,x,u(s,x)))\medskip\\
\qquad+f(s,x,u(s,x),(\nabla u(s,x))^{\ast}\sigma(s,x,u(s,x)))\in \partial\varphi(u(s,x))+\langle \partial\psi(x),\nabla u(s,x) \rangle, \medskip\\
\qquad\qquad\qquad\qquad\qquad\qquad\qquad\qquad\qquad\qquad\qquad\qquad (s,x) \in[0,T]\times Dom\psi,\medskip\\
u(T,x)=g(x),\quad x\in Dom\psi.
\end{array}
\right.
\]
This type of PVI is new since it is driven by two subdifferential operators, one operating over the state and the other operating in the domain and perturbing the direction of the gradient. We define the viscosity solution of such kind of PVIs and prove the uniqueness of the viscosity solutions when $\sigma$ does not depend on $y$. Indeed, by extending and adapting the approaches of Barles, Buckdahn and Pardoux \cite{BBP-97} and Cvitani\'{c} and Ma \cite{MC-01}, we prove the uniqueness of the viscosity solutions in the class of Lipschitz continuous functions. To prove the existence of a viscosity solution, a stochastic representation formula of Feymann-Kac type will be developed. For this end, we investigate the following general fully coupled FBSDEs with subdifferential operators in both the forward and the backward equations,
\[
\left\{
\begin{array}
[c]{l}%
dX_{t}+\partial\psi(X_{t})dt\ni b(t,X_{t},Y_{t},Z_{t})dt+\sigma(t,X_{t},Y_{t},Z_{t})dB_{t},\medskip\\
-dY_{t}+\partial\varphi(Y_{t})dt\ni f(t,X_{t},Y_{t},Z_{t})dt-Z_{t}dB_{t},\quad t\in[0,T],\medskip\\
X_{0}=x, \quad Y_{T}=g(X_{T}).
\end{array}
\right.
\]
We call this kind equations forward-backward stochastic variational inequalities (FBSVIs). Notice that this type of inequalities includes, as a special case, coupled systems composed of a forward and a backward equation, both reflected at the boundary  of a closed convex set. Such kind of FBSVIs are worthy of investigation themselves.

As concerns the fully coupled forward-backward stochastic differential equations (FBSDEs), a generalization of non-coupled forward-backward systems studied by Pardoux and Peng in \cite{PP-92} and \cite{P-91}, using the contraction mapping method, Antonelli \cite{A-93} was the first to prove the existence and the uniqueness for such equations on a small time interval.
To show the solvability of FBSDEs on arbitrary time interval, Ma, Protter and Yong \cite{MPY-94} introduced  the so-called Four-Step-Scheme,
which was inspired by the pioneering work of Ma and Yong \cite{MY-95}. In their approach, the study of FBSDE reduces to the problem of a certain parabolic PDE. However, for this approach one needs that the coefficients are deterministic and the diffusion coefficient has to be non-degenerate. Based on this approach, Delarue got more general results in \cite{D-02}. Without the above conditions, but with a monotonicity assumption,  Hu and Peng \cite{HP-95} used the continuation method to prove that the FBSDE has a unique adapted solution.  Peng and Wu \cite {PW-99} extended \cite{HP-95} to the multidimensional case, while Yong \cite{Y-97} weakened the monotonicity assumptions. On the other hand, Pardoux and Tang \cite{PT-99} obtained the solvability of the FBSDE under some natural monotonicity conditions different from those in \cite{HP-95} and \cite{Y-97}, by using the contraction mapping method. Moreover, they studied the connection between the solution of FBSDEs and associated quasilinear parabolic PDEs. Recently, Zhang \cite{Z-06} introduced a new approach and new general conditions to get the wellposedness of FBSDEs via the induction method and Ma, Wu, Zhang, Zhang \cite{MWZZ-11} found a unified scheme to show the wellposedness of the FBSDEs in a general non-Markovian framework. In the spirit of Pardoux and Tang \cite{PT-99}, Cvitani\'{c} and Ma \cite{MC-01} studied reflected FBSDEs and used them to give a probabilistic interpretation for the viscosity solution of quasilinear variational inequalities with a Neumann boundary condition.

In our paper, we will prove the existence and the uniqueness for FBSVIs, i.e., for coupled systems composed of a forward SVI and a BSVI. Unlike \cite{MC-01}, our FBSVI is more general. Indeed, our FBSVIs cover the case of reflected FBSDEs, where the reflection of the forward as well as the backward equation takes place at the border of closed convex sets. In addition, the backward equation in our case can be multidimensional. Compared with \cite{PR-98}, our FBSVI is fully coupled and the forward equation also includes a subdifferential operator, which induce some difficulties. Indeed, we study the penalized FBSDE using Yosida approximation for lower semicontinuous (l.s.c.) functions to approach our FBSVI. $L^{p}$-estimates for the solution of the penalized FBSDE on the whole interval are necessary (see the proof of our Proposition \ref{Boundedness proposition of two operators} and \ref{Lipschitz proposition of two operators}). However, the method in Cvitani\'{c} and Ma \cite{MC-01} can only give $L^{2}$-estimates. Consequently, we should adapt the induction method introduced by Delarue in \cite{D-02} to obtain the $L^{p}$-estimates in our framework (see Proposition \ref{proposition for high order estimates}).

Moreover, we will prove that the function $u$ defined through the solution of our FBSVI (see (\ref{definition of u})) is a viscosity solution of our new kind of quasilinear PVI. But because of the existence of the subdifferential operators, the continuity of $u$ is not obvious at all. Therefore, we give a detailed proof in Proposition \ref{Property of u}. For this, we separate the proof into two steps: To prove that $u$ is right continuous w.r.t. $t$ and continuous w.r.t. $x$ in step 1, as well as left continuous w.r.t. $t$ and continuous w.r.t. $x$ in step 2.

The paper is organized  as follows: In Section 2 we formulate the problem and we give the definition of the viscosity solution of our new kind of PVIs. Section 3 is devoted to prove the uniqueness of the viscosity solutions of PVIs.  In order to show the existence of the viscosity solution, in Section 4, we study general FBSVIs. More precisely, we obtain the existence and uniqueness of the solutions of FBSVIs. To do this, in subsection 4.1, we give the assumptions. In subsection 4.2, we introduce the penalized FBSDEs which are got by Yosida approximation for the subdifferential operators of the FBSVIs. Moreover, a priori estimates are established. Subsection 4.3 is devoted to the uniform $L^{p}$-estimates of the penalized FBSDEs. For this, we establish first the $L^{p}$-estimates on a small time interval and then we extend them to the whole interval by adapting the method developed by Delarue \cite{D-02}. In subsection 4.4, based on the classical estimates for solving forward SVIs and BSVIs (see Proposition \ref{Boundedness proposition of two operators} and \ref{Lipschitz proposition of two operators}), we prove the existence and the
uniqueness of the solution of FBSVIs. In Section 5, we prove that the function $u$ defined as in (\ref{definition of u}) is the viscosity solution of such PVI. Finally, in the appendix, we provide a priori estimates for the penalized FBSDEs and some auxiliary results.
\section{Formulation of the problem}
We consider the following quasilinear PVI:
\begin{equation}\label{PVI}
\left\{
\begin{array}
[c]{l}%
{\displaystyle\frac{\partial u}{\partial s}(s,x)}+(\mathcal{L}u)(s,x,u(s,x),(\nabla u(s,x))^{\ast}\sigma(s,x,u(s,x)))\medskip\\
\qquad+f(s,x,u(s,x),(\nabla u(s,x))^{\ast}\sigma(s,x,u(s,x)))\in \partial\varphi(u(s,x))+\langle \partial\psi(x),\nabla u(s,x) \rangle, \medskip\\
\qquad\qquad\qquad\qquad\qquad\qquad\qquad\qquad\qquad\qquad\qquad\qquad (s,x) \in[0,T]\times Dom\psi,\medskip\\
u(T,x)=g(x),\quad x\in Dom\psi,
\end{array}
\right.
\end{equation}
where the operator $\mathcal{L}$ is defined by
\[(\mathcal{L}v)(s,x,y,z):=\frac{1}{2}\sum_{i,j=1}^{n}(\sigma\sigma^{\ast})_{i,j}(s,x,y)\frac{\partial^{2}v}{\partial x_{i}\partial x_{j}}(s,x)
+\sum_{i=1}^{n}b_{i}(s,x,y,z)\frac{\partial v}{\partial x_{i}}(s,x).
\]
for $v\in C^{1,2}\big([0,T]\times \mathbb{R}^{n})$.
The functions $b:[0,T]\times\mathbb{R}^{n}\times\mathbb{R}\times\mathbb{R}^{1\times d}\rightarrow \mathbb{R}^{n}$,
$\sigma:[0,T]\times\mathbb{R}^{n}\times\mathbb{R}\rightarrow \mathbb{R}^{n\times d}$,
$f:[0,T]\times\mathbb{R}^{n}\times\mathbb{R}\times\mathbb{R}^{1\times d}\rightarrow \mathbb{R}$ and
$g:\mathbb{R}^{n}\rightarrow \mathbb{R}$  are jointly continuous.
The operator $\partial\psi$ (resp. $\partial\varphi$) is the subdifferential of function $\psi$ (resp. $\varphi$) which satisfies:
\begin{itemize}
\item[$\left(  H^{\prime}_{1}\right)  $] The function $\psi:\mathbb{R}^{n}\rightarrow(-\infty,+\infty]$ is convex l.s.c.
with $0\in Int(Dom\psi)$ and $\psi(z)\geq\psi(0)=0$, for all $z\in\mathbb{R}^{n}$.

\item[$\left(  H^{\prime}_{2}\right)  $] The function $\varphi:\mathbb{R}\rightarrow(-\infty,+\infty]$ is convex l.s.c. such that
$\varphi(y)\geq\varphi(0)=0$ for all $y\in\mathbb{R}$.
\end{itemize}
We put
\[
\begin{array}
[c]{l}
Dom~\psi=\{u\in\mathbb{R}^{n}:\psi(u)<\infty\},\medskip\\
\partial\psi(u)=\{u^{\ast}\in\mathbb{R}^{n}:\langle u^{\ast},v-u\rangle+\psi(u)\leq
\varphi(v),\; v\in\mathbb{R}^{n}\},\medskip\\
Dom(\partial\psi)=\{u\in\mathbb{R}^{n}:\partial\psi(u)\neq\emptyset
\},
\end{array}
\]
and we write
$
(u,u^{\ast})\in\partial\psi \text{ if } u\in Dom(\partial\psi
) \text{ and } u^{\ast}\in\partial\psi(u).
$
Here $\langle \cdot,\cdot\rangle$ denotes the scalar product in $\mathbb{R}^{n}$.

We also mention that the multivalued subdifferential operator $\partial\psi$ is a
monotone operator, i.e. $\langle u^{\ast}-v^{\ast},u-v\rangle\geq0,\;\text{for all }(u,u^{\ast}) ,(v,v^{\ast})\in
\partial\psi.$

\footnotetext[1]{{\scriptsize Let $u\in C\big([0,T]\times Dom\psi\big)$ and  $(t,x)\in [0,T]\times Dom\psi$. Denote by $\mathcal{P}^{2,+}u(t,x)$ the set of triples $(p,q,X)\in \mathbb{R}\times\mathbb{R}^{n}\times S(n)$ ($S(n)$ denotes the set of all $n\times n$ symmetric nonnegative matrices), such that
\[
u(s,y)\leq u(t,x)+p(s-t)+\langle q,y-x\rangle+\frac{1}{2}\langle X(y-x),y-x\rangle
+o\big(|s-t|+|y-x|^{2}\big), ~(s,y)\rightarrow(t,x)
\]
we call $\mathcal{P}^{2,+}u(t,x)$ the parabolic super-jet of $u$ at $(t,x)$. Similarly, we define the parabolic sub-jet of $u$ at $(t,x)$, denoted by $\mathcal{P}^{2,-}u(t,x)$ as the set of triples $(p,q,X)\in \mathbb{R}\times\mathbb{R}^{n}\times S(n)$ such that
\[
u(s,y)\ge u(t,x)+p(s-t)+\langle q,y-x\rangle+\frac{1}{2}\langle X(y-x),y-x\rangle
+o\big(|s-t|+|y-x|^{2}\big),~(s,y)\rightarrow(t,x).
\]}}

Now we give the definition of a viscosity solution of PVI (\ref{PVI}) in the language of sub- and super-jets:
\begin{definition}\label{definition of viscosity by superjet subjet}
Let $u\in C\big([0,T]\times Dom\psi\big)$ satisfies $u(T,x)=g(x),~x\in Dom\psi$. The function $u$ is called a viscosity subsolution (resp. supersolution) of PVI (\ref{PVI}), if for all $(t,x)\in [0,T]\times Dom\psi$, $u(t,x)\in Dom\varphi$ and for any $(p,q,X)\in\mathcal{P}^{2,+}u(t,x)^{\ast}$ (resp. $(p,q,X)\in\mathcal{P}^{2,-}u(t,x)$),
\begin{equation}\label{subsolution}
\begin{array}
[c]{rl}
-p-\frac{1}{2}Tr(\sigma\sigma^{\ast}(t,x,u(t,x))X)-\langle b(t,x,u(t,x),q^{\ast}\sigma(t,x,u(t,x))),q\rangle\medskip\\
-f(t,x,u(t,x),q^{\ast}\sigma(t,x,u(t,x)))\leq -\varphi_{-}^{\prime}(u(t,x))-\partial\psi_{\ast}(x,q)
\end{array}
\end{equation}
(resp.
\begin{equation}\label{supersolution}
\begin{array}
[c]{rl}
-p-\frac{1}{2}Tr(\sigma\sigma^{\ast}(t,x,u(t,x))X)-\langle b(t,x,u(t,x),q^{\ast}\sigma(t,x,u(t,x))),q\rangle\medskip\\
-f(t,x,u(t,x),q^{\ast}\sigma(t,x,u(t,x)))\ge -\varphi_{+}^{\prime}(u(t,x))-\partial\psi^{\ast}(x,q)).
\end{array}
\end{equation}
Here $\varphi_{-}^{\prime}(y)$ (resp. $\varphi_{+}^{\prime}(y)$) denotes the left (resp. right) derivative of $\varphi$ at point $y$, and
\[\partial\psi_{\ast}(x,q):=\liminf\limits_{(x^{\prime},q^{\prime})\rightarrow (x,q),~x^{\ast}\in\partial\psi(x^{\prime})}\langle x^{\ast},q^{\prime}\rangle,\quad (x,q)\in
Dom\psi\times \mathbb{R}^{d},
\]
and $\partial\psi^{\ast}(x,q):=-\partial\psi_{\ast}(x,-q)$ (for $\partial\psi_{\ast}$ and $\partial\psi^{\ast}$, see also \cite{Z-02}).

The function $u$ is called a viscosity solution of PVI (\ref{PVI}) if it is both a viscosity sub- and super-solution.
\end{definition}

\begin{remark}\label{viscosity solution in closed superjet}
Using the definition of $\partial\psi_{\ast}(x,q)$, and the fact that $y\mapsto\varphi_{-}^{\prime}(y)$ is left continuous in $Int(Dom\varphi)(\subset\mathbb{R})$ and increasing in $Dom\varphi$, we see that (\ref{subsolution}) is not only satisfied for $(p,q,X)\in\mathcal{P}^{2,+}u(t,x)$, but also for all
$(p,q,X)\in\overline{\mathcal{P}}^{2,+}u(t,x)$. Similarly, (\ref{supersolution}) holds also for $(p,q,X)\in\overline{\mathcal{P}}^{2,-}u(t,x)$.
\end{remark}
We shall also use the following equivalent definition of a viscosity solution.
\begin{definition}\label{definition of viscosity by smooth function}
Let $u\in C\big([0,T]\times Dom\psi\big)$ satisfies $u(T,x)=g(x), x\in Dom\psi$. The function $u$ is called a viscosity subsolution (resp. supersolution) of PVI (\ref{PVI}), if for all $(t,x)\in [0,T)\times Dom\psi$, $u(t,x)\in Dom(\varphi),$ and if whenever $\Phi\in C^{1,2}([0,T]\times Dom\psi)$ and $(t,x)\in [0,T)\times Dom\psi$ is a local maximum (resp. minimum) point of $u-\Phi$, we have
\[
\begin{array}
[c]{l}
{\displaystyle\frac{\partial \Phi}{\partial s}(t,x)}+(\mathcal{L}u)(t,x,u(t,x),(\nabla \Phi(t,x))^{\ast}\sigma(t,x,u(t,x)))\medskip\\
\qquad+f(t,x,u(t,x),(\nabla \Phi(t,x))^{\ast}\sigma(t,x,u(t,x)))\ge \varphi_{-}^{\prime}(u(t,x))+\partial\psi_{\ast}(x,\nabla \Phi(t,x))
\end{array}
\]
(resp.
\[
\begin{array}
[c]{l}
{\displaystyle\frac{\partial \Phi}{\partial s}(t,x)}+(\mathcal{L}u)(t,x,u(t,x),(\nabla \Phi(t,x))^{\ast}\sigma(t,x,u(t,x)))\medskip\\
\qquad+f(t,x,u(t,x),(\nabla \Phi(t,x))^{\ast}\sigma(t,x,u(t,x)))\leq\varphi_{+}^{\prime}(u(t,x))+\partial\psi^{\ast}(x,\nabla \Phi(t,x))).
\end{array}
\]

Finally, the function $u$ is called a viscosity solution of PVI (\ref{PVI}) if it is both a viscosity sub- and super-solution.
\end{definition}
\section{Uniqueness of viscosity solutions of PVIs}
In this section, we prove the uniqueness of the viscosity solution of PVI (\ref{PVI}) by extending and adapting the approaches of Barles, Buckdahn and Pardoux \cite{BBP-97} and Cvitani\'{c} and Ma \cite{MC-01}.
\begin{theorem}\label{uniqueness of pde}
We assume that $Dom\psi$ is locally compact, $b,\sigma,f,g$ are all jointly continuous. Moreover, suppose that $b,f$ are Lipschitz continuous w.r.t. $(x,y,z)$ and $\sigma(t,x,y)$ does not depend on $y$ as well as Lipschitz continuous w.r.t. $x$. Then under the assumptions of $(H^{\prime}_{1})$ and $(H^{\prime}_{2})$, PVI (\ref{PVI}) has at most one viscosity solution in the class of functions which are Lipschitz continuous in $x$ uniformly w.r.t. $t$ and continuous in $t$.
\begin{remark}
We mention that it is sufficient to show that if $u$ is a subsolution and $v$ is a supersolution such that $u(T,x)=g(x)=v(T,x),~x\in Dom\psi$ and $u,v$ are Lipschitz continuous in $x$ uniformly w.r.t. $t$ and continuous in $t$, then $u\leq v$ for all $(t,x)\in[0,T]\times Dom\psi$.

Since $u$ and $v$ are continuous, we only need to show that $u\leq v$ for all $(t,x)\in(0,T)\times Int(Dom\psi)$. Let us define for each $r>0$ a subset of $Dom\psi$£º
\[(Dom\psi)_{r}:=\{x\in Dom\psi\quad \big|\quad d(x,\partial Dom\psi)\ge r\},
\]
where $d(x,\partial Dom\psi)=\inf\limits_{x^{\prime}\in\partial Dom\psi}|x-x^{\prime}|$.
Let us choose $r_{0}>0$ such that $(Dom\psi)_{r_{0}}\neq\emptyset$ for all $0<r\leq r_{0}$.
Then it suffices to show that for every $0<r\leq r_{0}$, we have $u\leq v$ on $(t,x)\in(0,T)\times(Dom\psi)_{r}$.
\end{remark}
\end{theorem}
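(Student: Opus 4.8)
The plan is a doubling-of-variables argument in the spirit of Barles--Buckdahn--Pardoux \cite{BBP-97} and Cvitani\'{c}--Ma \cite{MC-01}, preceded by two reductions. By the Remark it suffices to fix $0<r\le r_{0}$ and to show that any viscosity subsolution $u$ and supersolution $v$ of (\ref{PVI}) that are Lipschitz in $x$ uniformly in $t$, continuous in $t$, and agree with $g$ at $t=T$, satisfy $u\le v$ on $(0,T)\times(Dom\psi)_{r}$. The first reduction I would make is an exponential change of unknown: replace $u,v$ by $e^{Kt}u,\,e^{Kt}v$ for a large constant $K$ (to be fixed below, depending only on the Lipschitz constants of $b,f$ and of $u,v$ in $x$). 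Because $\sigma$ does not depend on the state variable, this transformation preserves the structure of (\ref{PVI}): $b,f$ are replaced by $b(t,x,e^{-Kt}y,e^{-Kt}z)$ and $e^{Kt}f(t,x,e^{-Kt}y,e^{-Kt}z)$, still Lipschitz with the \emph{same} constants $L_{b},L_{f}$ in $(y,z)$; $\varphi$ is replaced by the $t$-dependent convex function $e^{2Kt}\varphi(e^{-Kt}\cdot)$, which is harmless; the operator $\langle\partial\psi(x),\nabla\cdot\rangle$ and the quantities $\partial\psi_{\ast},\partial\psi^{\ast}$ are \emph{unchanged} thanks to the positive homogeneity in $q$ and the identity $e^{Kt}e^{-Kt}=1$; and a new zeroth-order term $-Ky$, strictly decreasing in $y$, is created. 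So I may assume from the outset that $f(t,x,y,z)=f_{0}(t,x,y,z)-Ky$ with $f_{0}$ Lipschitz of constant $L_{f}$ in $(y,z)$, the point being that $-Ky$ will dominate the Lipschitz-in-state dependence of $b$ and $f_{0}$ once $K$ is large.

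\noindent Next I would assume for contradiction that $\theta:=\sup_{(0,T)\times(Dom\psi)_{r}}(u-v)>0$ (it is finite by the linear growth of $u,v$, equals the supremum over $[0,T]\times(Dom\psi)_{r}$ by continuity, and is not attained at $t=T$ since $u=v=g$ there), and double variables:
\[
\Upsilon(t,x,y):=u(t,x)-v(t,y)-\frac{|x-y|^{2}}{2\varepsilon}-\beta\Big((1+|x|^{2})^{1/2}+(1+|y|^{2})^{1/2}\Big),\qquad \varepsilon,\beta>0.
\]
Using the linear growth of $u,v$ and the local compactness of $Dom\psi$, for fixed $\varepsilon,\beta$ this attains its maximum over $[0,T]\times\overline{(Dom\psi)_{r}}\times\overline{(Dom\psi)_{r}}$ at a point $(\bar t,\bar x,\bar y)$; since $Dom\psi$ is convex and $r>0$ we have $\overline{(Dom\psi)_{r}}\subset Int(Dom\psi)$, so $\bar x,\bar y$ are interior points, and the time-endpoints $t\in\{0,T\}$ are excluded in the standard way (the terminal condition at $T$, a penalization near $0$), so I may take $\bar t\in(0,T)$. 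The classical estimates then give $|\bar x-\bar y|\le C\varepsilon$ (here the Lipschitz continuity of $u,v$ in $x$ is used), $\beta\big((1+|\bar x|^{2})^{1/2}+(1+|\bar y|^{2})^{1/2}\big)\to0$, $u(\bar t,\bar x)-v(\bar t,\bar y)\to\theta$, and, for $\varepsilon$ small, $\bar x,\bar y$ confined to a compact $\mathcal K\subset Int(Dom\psi)$ on which $\partial\psi$ is bounded. I would then apply the parabolic theorem on sums to obtain $a\in\mathbb{R}$, $X,Y\in S(n)$ and gradient slots $q_{x},q_{y}$ with $|q_{x}-p_{\varepsilon}|,|q_{y}-p_{\varepsilon}|\le\beta$ (where $p_{\varepsilon}:=\varepsilon^{-1}(\bar x-\bar y)$), $(a,q_{x},X)\in\overline{\mathcal P}^{2,+}u(\bar t,\bar x)$, $(a,q_{y},Y)\in\overline{\mathcal P}^{2,-}v(\bar t,\bar y)$, and the usual matrix inequality between $X$ and $Y$.

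\noindent Subtracting (\ref{subsolution}) at $(\bar t,\bar x)$ from (\ref{supersolution}) at $(\bar t,\bar y)$ (legitimate for closed jets by Remark \ref{viscosity solution in closed superjet}), I would dispose of the three troublesome groups of terms thus. \emph{(i)} Since $u(\bar t,\bar x)>v(\bar t,\bar y)$ and $\varphi$ (now $\varphi_{\bar t}$) is convex, $\varphi_{-}^{\prime}(u(\bar t,\bar x))\ge\varphi_{+}^{\prime}(v(\bar t,\bar y))$, so the $\partial\varphi$-terms fall on the favourable side. \emph{(ii)} By local boundedness and closedness of the graph of $\partial\psi$ on $\mathcal K$, $\partial\psi_{\ast}(\bar x,q_{x})=\min_{\xi\in\partial\psi(\bar x)}\langle\xi,q_{x}\rangle$ and $\partial\psi^{\ast}(\bar y,q_{y})=\max_{\eta\in\partial\psi(\bar y)}\langle\eta,q_{y}\rangle$; taking a minimizer $\xi_{0}$ and a maximizer $\eta_{0}$, the monotonicity $\langle\xi_{0}-\eta_{0},\bar x-\bar y\rangle\ge0$ together with $|q_{x}-p_{\varepsilon}|,|q_{y}-p_{\varepsilon}|\le\beta$ gives
\[
\partial\psi_{\ast}(\bar x,q_{x})-\partial\psi^{\ast}(\bar y,q_{y})\ \ge\ \langle\xi_{0}-\eta_{0},p_{\varepsilon}\rangle-C\beta\ \ge\ -C\beta,
\]
so this group too is favourable up to $o_{\beta}(1)$. \emph{(iii)} Because $\sigma$ is Lipschitz in $x$ \emph{and independent of the state}, the matrix inequality yields $\tfrac12 Tr\big(\sigma\sigma^{\ast}(\bar t,\bar x)X-\sigma\sigma^{\ast}(\bar t,\bar y)Y\big)\le\tfrac{C}{\varepsilon}|\bar x-\bar y|^{2}+C\beta\le C\varepsilon+C\beta=o(1)$. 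What remains is
\[
K\big(u(\bar t,\bar x)-v(\bar t,\bar y)\big)\le\big\langle b(\bar t,\bar x,u(\bar t,\bar x),q_{x}^{\ast}\sigma(\bar t,\bar x))-b(\bar t,\bar y,v(\bar t,\bar y),q_{y}^{\ast}\sigma(\bar t,\bar y)),p_{\varepsilon}\big\rangle+\big(f_{0}(\bar t,\bar x,\cdot)-f_{0}(\bar t,\bar y,\cdot)\big)+o_{\varepsilon}(1)+o_{\beta}(1),
\]
where the $f_{0}$-arguments are as in the $b$-term. Since $u,v$ are Lipschitz in $x$ so that $|p_{\varepsilon}|\le C$, and $|q_{x}^{\ast}\sigma(\bar t,\bar x)-q_{y}^{\ast}\sigma(\bar t,\bar y)|\le C|\bar x-\bar y|+C\beta=o(1)$, the right-hand side is at most $C_{0}|u(\bar t,\bar x)-v(\bar t,\bar y)|+o_{\varepsilon}(1)+o_{\beta}(1)$, where one checks that the $e^{\pm Kt}$ factors from the change of unknown cancel in this bound, so $C_{0}$ depends only on the Lipschitz constants of the \emph{original} $b,f,u,v$. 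Taking $K:=C_{0}+1$ then forces $u(\bar t,\bar x)-v(\bar t,\bar y)\le o_{\varepsilon}(1)+o_{\beta}(1)$, and letting $\beta\downarrow0$ then $\varepsilon\downarrow0$ contradicts $u(\bar t,\bar x)-v(\bar t,\bar y)\to\theta>0$. Hence $u\le v$, as required.

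\noindent The hard part, I expect, is step (ii): the term $\langle\partial\psi(x),\nabla u\rangle$ is the genuinely new feature of (\ref{PVI}), and it must be checked that it is truly unaffected by the exponential change of unknown, that the representation $\partial\psi_{\ast}(x,q)=\min_{\xi\in\partial\psi(x)}\langle\xi,q\rangle$ holds in $Int(Dom\psi)$ (which is exactly why one localizes to $(Dom\psi)_{r}$ and uses the local compactness of $Dom\psi$, so that $\partial\psi$ is locally bounded with closed graph there), and — the crucial structural point — that the single quadratic penalty $|x-y|^{2}/2\varepsilon$ produces the \emph{same} gradient slot $p_{\varepsilon}$ in the two jets, which is what makes the monotonicity of $\partial\psi$ applicable with the correct sign. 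The second essential point is the hypothesis that $\sigma$ is independent of the state: without it the estimate in (iii) would carry an extra, non-vanishing term proportional to $|u(\bar t,\bar x)-v(\bar t,\bar y)|$, and the scheme would break down.
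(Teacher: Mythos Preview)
Your structural analysis is largely correct: the treatment of the $\partial\varphi$ terms via convexity, the handling of $\partial\psi_{\ast}-\partial\psi^{\ast}$ via monotonicity (once you know $\bar x,\bar y\in Int(Dom\psi)$ and can use Lemma~\ref{Z 2002 lemma}), and the observation that $\sigma$ must be state-independent for step (iii) are all right. Your route also differs from the paper's: the paper does \emph{not} perform an exponential change of unknown, but instead first shows via a local doubling argument (Lemma~\ref{viscosity subsolution for one kind of pde}, on the bounded set $(Dom\psi)_{r,R}$) that $\omega=u-v$ is a viscosity subsolution of the linear equation~(\ref{one kind of pde}), and then constructs a strict supersolution $\chi(t,x)=\exp\{[\tilde C(T-t)+\tilde A]\eta(x)\}$ with $\eta(x)\sim(\log|x|)^{2}$ (Lemma~\ref{an exactly supersolution}) and compares.

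There is, however, a genuine gap in your argument that this two-step decomposition is designed to close. You assert that $\theta=\sup(u-v)$ is finite ``by the linear growth of $u,v$'', but Lipschitz continuity of $u$ and $v$ separately does \emph{not} bound their difference: $(u-v)(t,x)$ can grow like $(L_{u}+L_{v})|x|$. Consequently, for small $\beta$ your penalty $\beta\big((1+|x|^{2})^{1/2}+(1+|y|^{2})^{1/2}\big)$, which also has only linear growth, fails to coerce $\Upsilon$ to $-\infty$ along the diagonal $x=y\to\infty$, and the maximum of $\Upsilon$ need not exist. Everything downstream (localization of $(\bar x,\bar y)$ to a compact $\mathcal K$, boundedness of $\partial\psi$ there, the $o_{\beta}(1)$ errors) rests on this. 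The paper avoids the difficulty by (a) doing the doubling on a bounded set $(Dom\psi)_{r,R}$, where maxima trivially exist, to obtain the auxiliary subsolution property for $\omega$, and (b) choosing a comparison function $\chi$ whose growth $\exp(C(\log|x|)^{2})$ is strictly faster than linear, so that $(\omega-\varepsilon\chi)e^{-\tilde K(T-t)}$ is bounded above for every $\varepsilon>0$. If you want to salvage the single-step approach, you must replace $\beta(1+|x|^{2})^{1/2}$ by a penalty of this super-linear type and redo the error analysis accordingly; but at that point you are essentially reproducing Lemma~\ref{an exactly supersolution}.
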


Before proving the Theorem \ref{uniqueness of pde}, we recall the following lemma:
\begin{lemma}[Lemma 2.3 \cite{Z-02}]\label{Z 2002 lemma}
Let us denote by $\overline{Dom\psi}$ the closure of $Dom\psi$. We have $\partial\psi_{\ast}(x,q)=\inf_{x^{\ast}\in\partial\psi(x)}\langle x^{\ast},q\rangle$, for $(x,q)\in Int(Dom\psi)\times\mathbb{R}^{n}$ or for $(x,q)\in\partial(Dom\psi)\times\mathbb{R}^{n}$ with $\inf_{n\in N_{\overline{Dom\psi}}(x)}\langle n,q\rangle>0$. Here
\[
N_{\overline{Dom\psi}}(x):=\left\{ x_{\ast}\in\mathbb{R}^{n}\Big||x_{\ast}|^{2}=1,\langle x_{\ast}, z-x\rangle\leq0, \text { for all } z\in \overline{Dom\psi}\right\}.
\]
\end{lemma}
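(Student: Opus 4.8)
The plan is to establish the two inequalities
\[
\partial\psi_{\ast}(x,q)\le\inf_{x^{\ast}\in\partial\psi(x)}\langle x^{\ast},q\rangle
\quad\text{and}\quad
\partial\psi_{\ast}(x,q)\ge\inf_{x^{\ast}\in\partial\psi(x)}\langle x^{\ast},q\rangle .
\]
Throughout I write $L(x,q):=\inf_{x^{\ast}\in\partial\psi(x)}\langle x^{\ast},q\rangle$ and $\delta:=\inf_{n\in N_{\overline{Dom\psi}}(x)}\langle n,q\rangle$, with the convention $\inf\emptyset=+\infty$ for both. The value of this convention is that it merges the two stated cases into one: for $x\in Int(Dom\psi)$ every unit vector $x_{\ast}$ admits a point $x+tx_{\ast}\in Dom\psi$ with $\langle x_{\ast},(x+tx_{\ast})-x\rangle=t>0$, so $N_{\overline{Dom\psi}}(x)=\emptyset$ and $\delta=+\infty>0$; while the boundary hypothesis is exactly $\delta>0$. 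Hence in either stated case $\delta>0$, and this single positivity is all that the argument will use.

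The first inequality is immediate. If $\partial\psi(x)=\emptyset$ it is trivial since $L(x,q)=+\infty$; otherwise, for each $x^{\ast}\in\partial\psi(x)$ the constant choice $(x',q',x^{\ast})=(x,q,x^{\ast})$ is admissible in the defining $\liminf$, so $\partial\psi_{\ast}(x,q)\le\langle x^{\ast},q\rangle$, and taking the infimum over $x^{\ast}\in\partial\psi(x)$ yields $\partial\psi_{\ast}(x,q)\le L(x,q)$.

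For the reverse inequality I would pick a sequence $(x'_{k},q'_{k})\to(x,q)$ with $x^{\ast}_{k}\in\partial\psi(x'_{k})$ realizing the liminf, i.e. $\langle x^{\ast}_{k},q'_{k}\rangle\to\partial\psi_{\ast}(x,q)=:\ell$ (such a sequence exists because interior points of $Dom\psi$ accumulate at $x$ and carry nonempty subdifferentials). The heart of the proof, and the only place $\delta>0$ is needed, is to control unbounded subgradients at a boundary point. Suppose $|x^{\ast}_{k}|\to\infty$ along a subsequence and set $n_{k}:=x^{\ast}_{k}/|x^{\ast}_{k}|\to n_{0}$ with $|n_{0}|=1$ after a further extraction. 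Dividing the subgradient inequality $\langle x^{\ast}_{k},z-x'_{k}\rangle\le\psi(z)-\psi(x'_{k})\le\psi(z)$ by $|x^{\ast}_{k}|$ — here I use $\psi(x'_{k})\ge\psi(0)=0$ from $(H'_{1})$ — and letting $k\to\infty$ gives $\langle n_{0},z-x\rangle\le0$ first for all $z\in Dom\psi$ and then, by density, for all $z\in\overline{Dom\psi}$; thus $n_{0}\in N_{\overline{Dom\psi}}(x)$ and $\langle n_{0},q\rangle\ge\delta>0$. Consequently $\langle x^{\ast}_{k},q'_{k}\rangle=|x^{\ast}_{k}|\langle n_{k},q'_{k}\rangle\to+\infty$, which forces $\ell=+\infty$.

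It remains to combine this blow-up dichotomy with the closed-graph (maximal monotonicity) property of the subdifferential. If $L(x,q)<\infty$, then by the first inequality $\ell=\partial\psi_{\ast}(x,q)\le L(x,q)<\infty$, so the blow-up above cannot occur and $\{x^{\ast}_{k}\}$ is bounded; extracting $x^{\ast}_{k}\to\bar x^{\ast}$ and passing to the limit in $\langle x^{\ast}_{k},z-x'_{k}\rangle+\psi(x'_{k})\le\psi(z)$, using the lower semicontinuity of $\psi$, yields $\bar x^{\ast}\in\partial\psi(x)$ and hence $\ell=\langle\bar x^{\ast},q\rangle\ge L(x,q)$. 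If instead $L(x,q)=+\infty$, i.e. $\partial\psi(x)=\emptyset$, the same closed-graph passage shows $\{x^{\ast}_{k}\}$ cannot be bounded (a bounded subsequence would manufacture a subgradient at $x$), so the blow-up applies and $\ell=+\infty=L(x,q)$. In every case $\partial\psi_{\ast}(x,q)\ge L(x,q)$, which with the first inequality gives the claimed identity. I expect the delicate steps to be the normalization argument producing $n_{0}\in N_{\overline{Dom\psi}}(x)$ and the careful bookkeeping of the convention $\inf\emptyset=+\infty$, which is what lets the interior and boundary cases be handled by a single line of reasoning.
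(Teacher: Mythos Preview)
The paper does not prove this lemma; it is merely quoted from \cite{Z-02} (Z\u{a}linescu, 2002) and used as a black box in the uniqueness argument. So there is no ``paper's own proof'' to compare against.

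That said, your argument is correct and self-contained. The two-inequality split is the natural approach, and the key technical point---that an unbounded sequence of subgradients, after normalization, limits to an outward normal in $N_{\overline{Dom\psi}}(x)$---is handled cleanly. Your use of $\psi\ge 0$ from $(H'_1)$ to control $\psi(z)-\psi(x'_k)$ before dividing by $|x^{\ast}_k|$ is legitimate here; it could be replaced by the weaker fact that a proper convex l.s.c.\ function is bounded below on bounded sets, but there is no need. The closed-graph step (passing to the limit in the subgradient inequality via lower semicontinuity of $\psi$) is standard, and your case split on whether $L(x,q)$ is finite correctly disposes of the possibility $\partial\psi(x)=\emptyset$ on the boundary. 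The unification of the interior and boundary cases via $\delta=+\infty$ when $N_{\overline{Dom\psi}}(x)=\emptyset$ is a nice touch that streamlines the exposition.
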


In addition, we need the following lemma generalizing Lemma 7.2 \cite{MC-01}.
\begin{lemma}\label{viscosity subsolution for one kind of pde}
Suppose that the assumptions of Theorem \ref{uniqueness of pde} are satisfied and $u$ is a subsolution and $v$ is a supersolution of (\ref{PVI}) such that $u(T,x)=g(x)=v(T,x),~x\in Dom\psi$. Moreover, we assume that $u,v$ are Lipschitz continuous in $x$, uniformly w.r.t. $t$, and continuous in $t$. Then for all $0<r\leq r_{0}$, the function $\omega:=u-v$ is a viscosity subsolution of the following equation:
\begin{equation}\label{one kind of pde}
\left\{
\begin{array}
[c]{l}min\{\omega,F_{u,v}(t,x,\omega,\omega_{t},D\omega,D^{2}\omega)+\partial\psi_{\ast}(x,D\omega)\}=0,\qquad
 (t,x) \in[0,T)\times (Dom\psi)_{r},\medskip\\
\omega(T,x)=0,\quad x\in (Dom\psi)_{r},
\end{array}
\right.
\end{equation}
where
\begin{equation}\label{F depend on u and v}
F_{u,v}(t,x,r,p,q,X):=-p-\frac{1}{2}Tr\{\sigma\sigma^{\ast}(t,x)X\}-\langle b(t,x,u(t,x),0),q\rangle-\tilde{K}[|r|+|q|\cdot|\sigma(t,x)|].
\end{equation}
Here $\tilde{K}>0$ is a constant depending only on the Lipschitz constants of the function $b,f,u,v$.
\end{lemma}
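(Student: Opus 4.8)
The plan is to verify directly that $\omega:=u-v$ satisfies the viscosity‑subsolution inequality for $(\ref{one kind of pde})$, using the doubling‑of‑variables technique and the Crandall--Ishii theorem on sums, in the spirit of \cite{BBP-97} and \cite{MC-01}. Fix $0<r\le r_{0}$, a test function $\Phi\in C^{1,2}([0,T]\times Dom\psi)$, and a point $(t_{0},x_{0})\in[0,T)\times(Dom\psi)_{r}$ at which $\omega-\Phi$ has a local maximum. If $\omega(t_{0},x_{0})\le 0$ the defining inequality of $(\ref{one kind of pde})$ is trivial, so we assume $\omega(t_{0},x_{0})>0$ and must show
\[
F_{u,v}\bigl(t_{0},x_{0},\omega(t_{0},x_{0}),\Phi_{t}(t_{0},x_{0}),\nabla\Phi(t_{0},x_{0}),D^{2}\Phi(t_{0},x_{0})\bigr)+\partial\psi_{\ast}\bigl(x_{0},\nabla\Phi(t_{0},x_{0})\bigr)\le 0.
\]
Adding $|t-t_{0}|^{2}+|x-x_{0}|^{4}$ to $\Phi$ (which does not affect its jet at $(t_{0},x_{0})$) we may assume the maximum is strict and attained on a compact box $\overline{Q}:=[t_{0}-a,t_{0}+a]\times\overline{B}(x_{0},a)$ with $a\le r/2$ and $[t_{0}-a,t_{0}+a]\subset[0,T)$. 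Since $x_{0}\in(Dom\psi)_{r}\subset Int(Dom\psi)$, Lemma \ref{Z 2002 lemma} allows us to use $\partial\psi_{\ast}(x,q)=\inf_{x^{\ast}\in\partial\psi(x)}\langle x^{\ast},q\rangle$ near $x_{0}$, and $\partial\psi(x_{0})$ is nonempty, compact and convex. Finally I record that, $\omega$ being Lipschitz in $x$ and $(\Phi_{t},\nabla\Phi,D^{2}\Phi)(t_{0},x_{0})\in\mathcal{P}^{2,+}\omega(t_{0},x_{0})$, one has $|\nabla\Phi(t_{0},x_{0})|\le L$, where $L$ is the sum of the $x$‑Lipschitz constants of $u$ and $v$.

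Next I would double the space variable: for $n\ge1$ let $(t_{n},x_{n},y_{n})$ maximise $\Upsilon_{n}(t,x,y):=u(t,x)-v(t,y)-\Phi(t,x)-\tfrac{n}{2}|x-y|^{2}$ over $\overline{Q}\times\overline{B}(x_{0},a)$. By the standard estimates $(t_{n},x_{n},y_{n})\to(t_{0},x_{0},x_{0})$, $n|x_{n}-y_{n}|^{2}\to 0$, $u(t_{n},x_{n})\to u(t_{0},x_{0})$, $v(t_{n},y_{n})\to v(t_{0},x_{0})$; comparing $\Upsilon_{n}(t_{n},x_{n},y_{n})$ with $\Upsilon_{n}(t_{n},x_{n},x_{n})$ and using the Lipschitz bound on $v$ gives $|p_{n}|\le 2\,\mathrm{Lip}_{x}(v)$ for $p_{n}:=n(x_{n}-y_{n})$, so along a subsequence $p_{n}\to\bar p$. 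For $n$ large $(t_{n},x_{n},y_{n})$ is interior, and the parabolic theorem on sums applied to $u(t,x)-v(t,y)-[\Phi(t,x)+\tfrac{n}{2}|x-y|^{2}]$ yields reals with $a_{1}^{n}-a_{2}^{n}=\Phi_{t}(t_{n},x_{n})$ and matrices $X_{n},Y_{n}\in S(n)$ such that $(a_{1}^{n},\nabla\Phi(t_{n},x_{n})+p_{n},X_{n})\in\overline{\mathcal{P}}^{2,+}u(t_{n},x_{n})$, $(a_{2}^{n},p_{n},Y_{n})\in\overline{\mathcal{P}}^{2,-}v(t_{n},y_{n})$, together with the usual matrix inequality. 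Writing $(\ref{subsolution})$ for the subsolution $u$ at $(t_{n},x_{n})$ and $(\ref{supersolution})$ for the supersolution $v$ at $(t_{n},y_{n})$ — legitimate for these closed jets by Remark \ref{viscosity solution in closed superjet} — and subtracting, one obtains, with $u_{n}=u(t_{n},x_{n})$, $v_{n}=v(t_{n},y_{n})$, $q_{n}^{u}=\nabla\Phi(t_{n},x_{n})+p_{n}$, $q_{n}^{v}=p_{n}$ and $B_{n}^{\bullet},f_{n}^{\bullet}$ the first‑order terms of $(\ref{subsolution})$--$(\ref{supersolution})$,
\[
-\Phi_{t}(t_{n},x_{n})-\tfrac12\bigl(Tr\{\sigma\sigma^{\ast}(t_{n},x_{n})X_{n}\}-Tr\{\sigma\sigma^{\ast}(t_{n},y_{n})Y_{n}\}\bigr)-(B_{n}^{u}-B_{n}^{v})-(f_{n}^{u}-f_{n}^{v})\le -\varphi'_{-}(u_{n})+\varphi'_{+}(v_{n})-\partial\psi_{\ast}(x_{n},q_{n}^{u})-\partial\psi_{\ast}(y_{n},-q_{n}^{v}),
\]
where $\partial\psi^{\ast}(y,q)=-\partial\psi_{\ast}(y,-q)$ was used.

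The remaining task is to pass to the limit in this inequality, via four estimates. (i) \emph{Second‑order term.} The matrix inequality gives $Tr\{\sigma\sigma^{\ast}(t_{n},x_{n})X_{n}\}-Tr\{\sigma\sigma^{\ast}(t_{n},y_{n})Y_{n}\}\le Tr\{\sigma\sigma^{\ast}(t_{n},x_{n})D^{2}\Phi(t_{n},x_{n})\}+Cn|\sigma(t_{n},x_{n})-\sigma(t_{n},y_{n})|^{2}$, and since $\sigma$ depends only on $(t,x)$ and is Lipschitz in $x$, the last term is $O(n|x_{n}-y_{n}|^{2})\to0$; this is exactly where the hypothesis ``$\sigma$ independent of $y$'' is indispensable, for otherwise $|\sigma(t_{n},x_{n})-\sigma(t_{n},y_{n})|$ would involve $|u_{n}-v_{n}|\to\omega(t_{0},x_{0})>0$. (ii) \emph{The $\varphi$‑terms.} Since $u_{n}-v_{n}\to\omega(t_{0},x_{0})>0$ we have $v_{n}<u_{n}$ for large $n$, hence $\varphi'_{+}(v_{n})\le\varphi'_{-}(u_{n})$ (both finite) by monotonicity of the one‑sided derivatives of the convex $\varphi$, so $-\varphi'_{-}(u_{n})+\varphi'_{+}(v_{n})\le0$. (iii) \emph{The subdifferential terms.} Superadditivity of $q\mapsto\partial\psi_{\ast}(x,q)$ gives $\partial\psi_{\ast}(x_{n},q_{n}^{u})\ge\partial\psi_{\ast}(x_{n},\nabla\Phi(t_{n},x_{n}))+\partial\psi_{\ast}(x_{n},p_{n})$, while $-\partial\psi_{\ast}(x_{n},p_{n})-\partial\psi_{\ast}(y_{n},-p_{n})=\sup_{x^{\ast}\in\partial\psi(x_{n}),\,y^{\ast}\in\partial\psi(y_{n})}\langle y^{\ast}-x^{\ast},p_{n}\rangle=\sup_{x^{\ast},y^{\ast}}\bigl(-n\langle x^{\ast}-y^{\ast},x_{n}-y_{n}\rangle\bigr)\le0$ by monotonicity of $\partial\psi$; hence $-\partial\psi_{\ast}(x_{n},q_{n}^{u})-\partial\psi_{\ast}(y_{n},-q_{n}^{v})\le-\partial\psi_{\ast}(x_{n},\nabla\Phi(t_{n},x_{n}))$, and the lower semicontinuity of $\partial\psi_{\ast}$ (immediate from its ``$\liminf$'' definition) gives $\liminf_{n}\partial\psi_{\ast}(x_{n},\nabla\Phi(t_{n},x_{n}))\ge\partial\psi_{\ast}(x_{0},\nabla\Phi(t_{0},x_{0}))$. (iv) \emph{The first‑order terms.} A routine Lipschitz estimate, using $|p_{n}|\le2\,\mathrm{Lip}_{x}(v)$, the Lipschitz continuity of $b,f$ in $(x,y,z)$, and crucially $|\nabla\Phi(t_{0},x_{0})|\le L$ — which converts the $|q|^{2}|\sigma|$‑type errors arising when the argument $q^{\ast}\sigma$ of $b$ is replaced by $0$ into $|q|\,|\sigma|$‑type ones — bounds $\limsup_{n}(B_{n}^{u}-B_{n}^{v}+f_{n}^{u}-f_{n}^{v})$ by $\langle b(t_{0},x_{0},u(t_{0},x_{0}),0),\nabla\Phi(t_{0},x_{0})\rangle+\tilde K\bigl[\omega(t_{0},x_{0})+|\nabla\Phi(t_{0},x_{0})|\,|\sigma(t_{0},x_{0})|\bigr]$, with $\tilde K$ depending only on the Lipschitz constants of $b,f,u,v$.

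By (ii) and (iii) the right‑hand side of the displayed inequality is bounded above by $-\partial\psi_{\ast}(x_{n},\nabla\Phi(t_{n},x_{n}))$; moving this term to the left, using (i) and (iv) on the resulting left‑hand side and passing to $\liminf_{n}$ (with the lower semicontinuity of $\partial\psi_{\ast}$) yields $F_{u,v}(t_{0},x_{0},\omega(t_{0},x_{0}),\Phi_{t}(t_{0},x_{0}),\nabla\Phi(t_{0},x_{0}),D^{2}\Phi(t_{0},x_{0}))+\partial\psi_{\ast}(x_{0},\nabla\Phi(t_{0},x_{0}))\le0$, which is the required inequality; the terminal condition $\omega(T,\cdot)=g-g=0$ is immediate. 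I expect the main obstacles to be estimate (i), where the assumption that $\sigma$ does not depend on the $u$‑variable is essential, and estimate (iii), where the monotonicity of $\partial\psi$ (through $-n\langle x^{\ast}-y^{\ast},x_{n}-y_{n}\rangle\le0$) together with the lower semicontinuity of $\partial\psi_{\ast}$ takes over the role played by the reflection/Neumann arguments for reflected FBSDEs in \cite{MC-01}.
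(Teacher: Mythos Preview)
Your proposal is correct and follows essentially the same route as the paper's proof: doubling the space variable, applying the Crandall--Ishii theorem on sums, subtracting the sub- and supersolution inequalities, and then handling separately the second-order term (via the matrix inequality and the Lipschitz property of $\sigma$ in $x$), the $\varphi$-terms (via monotonicity of one-sided derivatives), the $\partial\psi$-terms (via monotonicity of $\partial\psi$ and Lemma~\ref{Z 2002 lemma}), and the first-order drift and $f$-terms (via Lipschitz estimates exploiting the bound on $|\nabla\Phi|$). The only cosmetic differences are that the paper carries an explicit parameter $\delta$ from Theorem~8.3 of \cite{CIL-92} and sends $\delta\to0$ before $\alpha\to\infty$, and it packages your estimates (i) and (iv) by citing Lemma~7.2 of \cite{MC-01} rather than spelling them out.
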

\begin{proof}
Fix $r\in(0,r_{0})$ and $(t_{0},x_{0})\in [0,T)\times (Dom\psi)_{r}$. Let $\Phi\in C^{1,2}([0,T]\times Dom\psi)$ be such that $(t_{0},x_{0})$ is a maximal point of $\omega-\Phi$ and $\omega(t_{0},x_{0})=\Phi(t_{0},x_{0})$. We assume w.l.o.g. that $(t_{0},x_{0})$ is a strict, global maximum point of $\omega-\Phi$. Moreover, the Lipschitz property of $u$ and $v$ allows to assume that $D\Phi$ is uniformly bounded: $|D\Phi|\leq K_{u,v}$. We are going to prove that
\begin{equation}\label{inequality going to prove}
min\{\omega(t_{0},x_{0}),F_{u,v}(t_{0},x_{0},\omega,\Phi_{t},D\Phi,D^{2}\Phi)+\partial\psi_{\ast}(x_{0},D\Phi)\}\leq0.
\end{equation}
Now for arbitrarily given $\alpha>0$, we define
\[\Psi_{\alpha}(t,x,y):=u(t,x)-v(t,y)-\frac{\alpha}{2}|x-y|^{2}-\Phi(t,x).
\]
We choose $R>0$ sufficiently large such that $(t_{0},x_{0})\in(Dom\psi)_{r,R}$, where $(Dom\psi)_{r,R}:=\{x\in(Dom\psi)_{r}\big||x|\leq R\}$. Then there exists $(t_{\alpha},x_{\alpha},y_{\alpha})\in[0,T]\times(Dom\psi)_{r,R}^{2}$ such that
$\Psi_{\alpha}(t_{\alpha},x_{\alpha},y_{\alpha})=\max_{[0,T]\times(Dom\psi)_{r,R}^{2}}\Psi_{\alpha}(t,x,y)$.

From Proposition 3.7 \cite{CIL-92} we have
\[
\left\{
\begin{array}
[c]{l}%
(i)\qquad (t_{\alpha},x_{\alpha},y_{\alpha})\rightarrow (t_{0},x_{0},x_{0}), \text{ as } \alpha\rightarrow\infty;\medskip\\
(ii)\qquad \alpha|x_{\alpha}-y_{\alpha}|^{2}\text{ is bounded and tends to zero, as } \alpha\rightarrow\infty.
\end{array}
\right.
\]
Moreover, from $\omega(t_{\alpha},x_{\alpha})-\Phi(t_{\alpha},x_{\alpha})\leq \omega(t_{0},x_{0})-\Phi(t_{0},x_{0})=0$ we have
\[
\begin{array}
[c]{l}
0= \omega(t_{0},x_{0})-\Phi(t_{0},x_{0})=\Psi_{\alpha}(t_{0},x_{0},x_{0})\leq \Psi_{\alpha}(t_{\alpha},x_{\alpha},y_{\alpha})\medskip\\
\quad=\omega(t_{\alpha},x_{\alpha})-\Phi(t_{\alpha},x_{\alpha})+v(t_{\alpha},x_{\alpha})-v(t_{\alpha},y_{\alpha})-\frac{\alpha}{2}|x_{\alpha}-y_{\alpha}|^{2}
\medskip\\
\quad\leq v(t_{\alpha},x_{\alpha})-v(t_{\alpha},y_{\alpha})-\frac{\alpha}{2}|x_{\alpha}-y_{\alpha}|^{2},
\end{array}
\]
from where we deduce that
$\alpha|x_{\alpha}-y_{\alpha}|\leq 2\left|\frac{v(t_{\alpha},x_{\alpha})-v(t_{\alpha},y_{\alpha})}{x_{\alpha}-y_{\alpha}}\right|\leq 2K_{v}$, where $K_{v}$ is the Lipschitz constant of $v$ w.r.t. $x$.

We can assume that $u(t_{0},x_{0})>v(t_{0},x_{0})$; if not, (\ref{inequality going to prove}) holds obviously. Thus, from the continuity of $u$ and $v$, it follows that, for some $\alpha_{0}>0$, we have
 $u(t_{\alpha},x_{\alpha})>v(t_{\alpha},y_{\alpha})$ for $\alpha\ge \alpha_{0}$.

Let $\alpha>\alpha_{0}$. From Theorem 8.3  \cite{CIL-92}, we obtain that, for any $\delta>0$, there exists $(X^{\delta},Y^{\delta})\in S(n)\times S(n)$ and $c^{\delta}\in\mathbb{R}^{n}$ such that
\[
\begin{array}
[c]{l}%
(c^{\delta}+{\displaystyle\frac{\partial\Phi}{\partial t}(t_{\alpha},x_{\alpha})}, \alpha(x_{\alpha}-y_{\alpha})+D\Phi(x_{\alpha},y_{\alpha}),X^{\delta})\in\bar{\mathcal{P}}^{2,+}u(t_{\alpha},x_{\alpha});\medskip\\
(c^{\delta}, \alpha(x_{\alpha}-y_{\alpha}),Y^{\delta})\in\bar{\mathcal{P}}^{2,-}v(t_{\alpha},y_{\alpha})
\end{array}
\]
and
\[
\left(
\begin{array}{cc}
 X^{\delta} & 0 \\
 0 & -Y^{\delta}
\end{array}
\right)\leq A+\delta A^{2},
\]
where $A=\left(
\begin{array}{cc}
 D^{2}\Phi(t_{\alpha},x_{\alpha})+\alpha &  -\alpha \\
  -\alpha &  \alpha
\end{array}\right)$.

From Definition \ref{definition of viscosity by superjet subjet} and Remark \ref{viscosity solution in closed superjet} it follows that
\[
\begin{array}
[c]{l}
 c^{\delta}+{\displaystyle\frac{\partial\Phi}{\partial t}(t_{\alpha},x_{\alpha})}+\frac{1}{2}Tr(\sigma\sigma^{\ast}(t_{\alpha},x_{\alpha})X^{\delta})\medskip\\
\qquad +\langle b(t_{\alpha},x_{\alpha},u(t_{\alpha},x_{\alpha}),[\alpha(x_{\alpha}-y_{\alpha})+D\Phi(t_{\alpha},x_{\alpha})]^{\ast}\sigma(t_{\alpha},x_{\alpha})),
\alpha(x_{\alpha}-y_{\alpha})+D\Phi(t_{\alpha},x_{\alpha})\rangle\medskip\\
\qquad\qquad+f(t_{\alpha},x_{\alpha},u(t_{\alpha},x_{\alpha}),[\alpha(x_{\alpha}-y_{\alpha})+D\Phi(t_{\alpha},x_{\alpha})]^{\ast}\sigma(t_{\alpha},x_{\alpha}))
\medskip\\
\quad\ge \varphi_{-}^{\prime}(u(t_{\alpha},x_{\alpha}))+\partial\psi_{\ast}(x_{\alpha},\alpha(x_{\alpha}-y_{\alpha})+D\Phi(t_{\alpha},x_{\alpha})),
\end{array}
\]
and
\[
\begin{array}
[c]{l}%
c^{\delta}+\frac{1}{2}Tr(\sigma\sigma^{\ast}(t_{\alpha},y_{\alpha})Y^{\delta})+\langle b(t_{\alpha},y_{\alpha},v(t_{\alpha},y_{\alpha}),[\alpha(x_{\alpha}-y_{\alpha})]^{\ast}\sigma(t_{\alpha},y_{\alpha})),
\alpha(x_{\alpha}-y_{\alpha})\rangle\medskip\\
+f(t_{\alpha},y_{\alpha},v(t_{\alpha},y_{\alpha}),[\alpha(x_{\alpha}-y_{\alpha})]^{\ast}\sigma(t_{\alpha},y_{\alpha}))
\leq \varphi_{+}^{\prime}(v(t_{\alpha},y_{\alpha}))+\partial\psi^{\ast}(y_{\alpha},\alpha(x_{\alpha}-y_{\alpha})).
\end{array}
\]
Then we have
\begin{equation}\label{subtracting inequality}
\begin{array}
[c]{l}\varphi_{+}^{\prime}(v(t_{\alpha},y_{\alpha}))-\varphi_{-}^{\prime}(u(t_{\alpha},x_{\alpha}))\medskip\\
\qquad\quad+\partial\psi^{\ast}(y_{\alpha},\alpha(x_{\alpha}-y_{\alpha})) -\partial\psi_{\ast}(x_{\alpha},\alpha(x_{\alpha}-y_{\alpha})+D\Phi(t_{\alpha},x_{\alpha}))\ge-\frac{\partial\Phi}{\partial t}(t_{\alpha},x_{\alpha})\medskip\\
~-\left\{\frac{1}{2}Tr(\sigma\sigma^{\ast}(t_{\alpha},x_{\alpha})X^{\delta})
-\frac{1}{2}Tr(\sigma\sigma^{\ast}(t_{\alpha},y_{\alpha})Y^{\delta})\right\}\qquad\qquad\qquad\qquad(=:-I_{1}^{\alpha,\delta})\medskip\\
~-\Big\{\langle b(t_{\alpha},x_{\alpha},u(t_{\alpha},x_{\alpha}),[\alpha(x_{\alpha}-y_{\alpha})+D\Phi(t_{\alpha},x_{\alpha})]^{\ast}\sigma(t_{\alpha},x_{\alpha})),
\alpha(x_{\alpha}-y_{\alpha})+D\Phi(t_{\alpha},x_{\alpha})\rangle\medskip\\
\qquad\quad-b(t_{\alpha},y_{\alpha},v(t_{\alpha},y_{\alpha}),[\alpha(x_{\alpha}-y_{\alpha})]^{\ast}\sigma(t_{\alpha},y_{\alpha})),
\alpha(x_{\alpha}-y_{\alpha})\rangle\Big\}\qquad(=:-I_{2}^{\alpha,\delta})\medskip\\
~-\Big\{f(t_{\alpha},x_{\alpha},u(t_{\alpha},x_{\alpha}),
[\alpha(x_{\alpha}-y_{\alpha})+D\Phi(t_{\alpha},x_{\alpha})]^{\ast}\sigma(t_{\alpha},x_{\alpha}))\medskip\\
\qquad\quad-f(t_{\alpha},y_{\alpha},v(t_{\alpha},y_{\alpha}),[\alpha(x_{\alpha}-y_{\alpha})]^{\ast}\sigma(t_{\alpha},y_{\alpha}))\Big\}
\qquad\qquad\qquad\quad~~(=:-I_{3}^{\alpha,\delta}).
\end{array}
\end{equation}
We observe that the same argument as in Lemma 7.2 \cite{MC-01} yields
\begin{equation}\label{limit property for the sum}
\begin{array}
[c]{l}
\lim\limits_{\alpha\rightarrow\infty}\lim\limits_{\delta\rightarrow0}(I_{1}^{\alpha,\delta}+I_{2}^{\alpha,\delta}+I_{3}^{\alpha,\delta})
\medskip\\
\leq \frac{1}{2}Tr(\sigma\sigma^{\ast}(t_{0},x_{0})D^{2}\Phi(t_{0},x_{0}))
+\langle b(t_{0},x_{0},u(t_{0},x_{0}),0),D\Phi(t_{0},x_{0})\rangle\medskip\\
\quad+\tilde{K}\left\{|u(t_{0},x_{0})-v(t_{0},y_{0})|+|D\Phi(t_{0},x_{0})|\cdot|\sigma(t_{0},x_{0})|\right\}.
\end{array}
\end{equation}
Let us calculate now the left-hand side of (\ref{subtracting inequality}). Since $u(t_{\alpha},x_{\alpha})>v(t_{\alpha},y_{\alpha})$, we have
\begin{equation}\label{inequality for varphi}
\varphi_{+}^{\prime}(v(t_{\alpha},y_{\alpha}))-\varphi_{-}^{\prime}(u(t_{\alpha},x_{\alpha}))\leq 0.
\end{equation}
Morover, since $\psi$ is convex, $x_{\alpha},y_{\alpha}\in Int(Dom\psi)$ and
$\langle y^{\ast},\alpha(x_{\alpha}-y_{\alpha})\rangle
-\langle x^{\ast},\alpha(x_{\alpha}-y_{\alpha})\rangle \leq 0$, for $x^{\ast}\in\partial\psi(x_{\alpha})$, $y^{\ast}\in\partial\psi(y_{\alpha})$.
By using Lemma \ref{Z 2002 lemma}, it follows that
\begin{equation}\label{inequality for psi}
\begin{array}
[c]{l}
\partial\psi^{\ast}(y_{\alpha},\alpha(x_{\alpha}-y_{\alpha})) -\partial\psi_{\ast}(x_{\alpha},\alpha(x_{\alpha}-y_{\alpha})+D\Phi(t_{\alpha},x_{\alpha}))\medskip\\
=\sup_{y^{\ast}\in\partial\psi(y_{\alpha})}\langle y^{\ast},\alpha(x_{\alpha}-y_{\alpha})\rangle
-\inf_{x^{\ast}\in\partial\psi(x_{\alpha})}\langle x^{\ast},\alpha(x_{\alpha}-y_{\alpha})+D\Phi(t_{\alpha},x_{\alpha})\rangle\medskip\\
\leq\sup_{y^{\ast}\in\partial\psi(y_{\alpha})}\langle y^{\ast},\alpha(x_{\alpha}-y_{\alpha})\rangle
-\inf_{x^{\ast}\in\partial\psi(x_{\alpha})}\langle x^{\ast},\alpha(x_{\alpha}-y_{\alpha})\rangle\medskip\\
\qquad\qquad\qquad-\inf_{x^{\ast}\in\partial\psi(x_{\alpha})}\langle x^{\ast},D\Phi(t_{\alpha},x_{\alpha})\rangle\medskip\\
\leq-\inf_{x^{\ast}\in\partial\psi(x_{\alpha})}\langle x^{\ast},D\Phi(t_{\alpha},x_{\alpha})\rangle\medskip\\
=-\partial\psi_{\ast}(x_{\alpha},D\Phi(t_{\alpha},x_{\alpha})).
\end{array}
\end{equation}
Finally, letting $\delta\rightarrow0$ and after letting $\alpha\rightarrow\infty$ in (\ref{subtracting inequality}), by considering (\ref{limit property for the sum})-(\ref{inequality for psi}), we obtain
\[
\begin{array}
[c]{l}
-{\displaystyle\frac{\partial\Phi}{\partial t}(t_{0},x_{0})}-\frac{1}{2}Tr(\sigma\sigma^{\ast}(t_{0},x_{0})D^{2}\Phi(t_{0},x_{0}))
-\langle b(t_{0},x_{0},u(t_{0},x_{0}),0),D\Phi(t_{0},x_{0})\rangle\medskip\\
\quad-\tilde{K}\left\{|u(t_{0},x_{0})-v(t_{0},y_{0})|+|D\Phi(t_{0},x_{0})|\cdot|\sigma(t_{0},x_{0})|\right\}
\leq -\partial\psi_{\ast}(x_{0},D\Phi(t_{0},x_{0})).
\end{array}
\]
Therefore, from (\ref{F depend on u and v}),
\[
F_{u,v}(t_{0},x_{0},\omega,{\displaystyle\frac{\partial\Phi}{\partial t}},D\Phi,D^{2}\Phi)+\partial\psi_{\ast}(x_{0},D\Phi)\leq 0,
\]
evaluated at $(t_{0},x_{0})$.\hfill
\end{proof}
\bigskip

Using the approach in Lemma 3.8 \cite{BBP-97}, we construct a suitable supersolution for (\ref{one kind of pde}).
\begin{lemma}\label{an exactly supersolution}
For any $\tilde{A}>0$, there exists $\tilde{C}>0$ such that the function
\[
\chi(t,x)=\exp\left\{[\tilde{C}(T-t)+\tilde{A}]\eta(x)\right\},
\]
with
\[\eta(x)=\left\{\log\left[(|x|^{2}+1)^{1/2}\right]+1\right\}^{2},
\]
satisfies
\[min\{\chi(t,x),F_{u,v}(t,x,\chi,\frac{\partial\chi}{\partial t},D\chi,D^{2}\chi)+\partial\psi_{\ast}(x,D\chi)\}>0, \quad \text{ in } [t_{1},T]\times(Dom\psi)_{r},
\]
where $t_{1}=(T-\tilde{A}/\tilde{C})^{+}$.
\end{lemma}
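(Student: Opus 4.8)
The plan is to verify the two members of the minimum separately. The bound $\chi(t,x)>0$ is immediate, since $\chi$ is an exponential. For the second member I would introduce the abbreviations $\theta(t):=\tilde{C}(T-t)+\tilde{A}$ and $\mu(x):=\log\big[(|x|^{2}+1)^{1/2}\big]+1$, so that $\mu\geq1$, $\eta=\mu^{2}\geq1$ and $\chi=e^{\theta\eta}$. A direct computation gives
\[
\frac{\partial\chi}{\partial t}=-\tilde{C}\,\eta\,\chi,\qquad D\chi=\theta\,\chi\,D\eta,\qquad D^{2}\chi=\theta\,\chi\,D^{2}\eta+\theta^{2}\,\chi\,D\eta\,(D\eta)^{\ast},
\]
with $D\eta(x)=2\mu(x)(1+|x|^{2})^{-1}x$, a nonnegative scalar multiple of $x$. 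The decisive observation --- the mechanism behind Lemma 3.8 of \cite{BBP-97} --- is the restriction to the slab $[t_{1},T]$ with $t_{1}=(T-\tilde{A}/\tilde{C})^{+}$: there $T-t\leq\tilde{A}/\tilde{C}$, hence
\[
\tilde{A}\leq\theta(t)\leq2\tilde{A}\qquad\text{on }[t_{1},T],
\]
so every factor of $\theta$ or $\theta^{2}$ arising below is bounded in terms of $\tilde{A}$ alone, not of $\tilde{C}$.

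Next I would dispose of the subdifferential term. Since $D\chi$ is a nonnegative multiple of $x$ and $q\mapsto\partial\psi_{\ast}(x,q)$ is positively homogeneous, $\partial\psi_{\ast}(x,D\chi)$ is a nonnegative multiple of $\partial\psi_{\ast}(x,x)$. For $x\in(Dom\psi)_{r}\subset Int(Dom\psi)$, Lemma \ref{Z 2002 lemma} gives $\partial\psi_{\ast}(x,x)=\inf_{x^{\ast}\in\partial\psi(x)}\langle x^{\ast},x\rangle$; writing the subdifferential inequality of $\psi$ at $z=0$ and using $\psi(0)=0\leq\psi(x)$ from $(H^{\prime}_{1})$ yields $\langle x^{\ast},x\rangle\geq\psi(x)\geq0$ for every $x^{\ast}\in\partial\psi(x)$. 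Hence $\partial\psi_{\ast}(x,D\chi)\geq0$ on $(Dom\psi)_{r}$, and it remains only to show $F_{u,v}(t,x,\chi,\frac{\partial\chi}{\partial t},D\chi,D^{2}\chi)>0$ on $[t_{1},T]\times(Dom\psi)_{r}$.

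For this I would insert the derivatives above into $F_{u,v}$, divide by $\chi>0$, and estimate term by term, using: the elementary bounds $|D\eta(x)|\leq2\mu(x)$, $\|D^{2}\eta(x)\|\leq C\mu(x)(1+|x|^{2})^{-1}$ and $\langle\sigma\sigma^{\ast}D\eta,D\eta\rangle=|\sigma^{\ast}D\eta|^{2}\leq C|\sigma(t,x)|^{2}\mu(x)^{2}(1+|x|^{2})^{-1}$; the inequality $(1+|x|)^{2}(1+|x|^{2})^{-1}\leq2$; the linear growth in $x$ of $\sigma(t,\cdot)$, of $b(t,\cdot,u(t,\cdot),0)$ and of $u(t,\cdot)$ (consequences of joint continuity on the compact $[0,T]$ and the Lipschitz hypotheses of Theorem \ref{uniqueness of pde}); and $\tilde{A}\leq\theta\leq2\tilde{A}$. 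This gives
\begin{align*}
\chi^{-1}F_{u,v}\!\left(t,x,\chi,\tfrac{\partial\chi}{\partial t},D\chi,D^{2}\chi\right)
&=\tilde{C}\,\eta-\tfrac{\theta}{2}Tr\{\sigma\sigma^{\ast}D^{2}\eta\}-\tfrac{\theta^{2}}{2}\langle\sigma\sigma^{\ast}D\eta,D\eta\rangle\\
&\quad-\theta\langle b(t,x,u(t,x),0),D\eta\rangle-\tilde{K}-\tilde{K}\theta|D\eta|\,|\sigma|\;\geq\;(\tilde{C}-M)\,\eta-\tilde{K},
\end{align*}
where $M$ depends only on $\tilde{A}$, $\tilde{K}$, $n$ and the Lipschitz/growth constants of $b,\sigma,u$, but not on $\tilde{C}$. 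Since $\eta\geq1$, choosing $\tilde{C}:=M+\tilde{K}+1$ makes the right-hand side $\geq1>0$ on all of $[t_{1},T]\times(Dom\psi)_{r}$, which proves the lemma. I expect the one genuine obstacle to be the quadratic term $\tfrac{\theta^{2}}{2}\langle\sigma\sigma^{\ast}D\eta,D\eta\rangle$: without the time localization it would be of order $\tilde{C}^{2}$ and could not be absorbed by $\tilde{C}\eta$, and it is precisely the logarithmic growth of $\eta$ that forces $|\sigma|^{2}|D\eta|^{2}=O(\eta)$ while the slab $[t_{1},T]$ keeps $\theta\leq2\tilde{A}$.
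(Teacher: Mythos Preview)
Your proposal is correct and follows essentially the same route as the paper: compute $D\chi,D^{2}\chi$, use $(H_{1}')$ together with Lemma~\ref{Z 2002 lemma} to get $\partial\psi_{\ast}(x,D\chi)\geq0$ on $Int(Dom\psi)$, then exploit the time localization $\theta\leq2\tilde{A}$ on $[t_{1},T]$ and the linear growth of $b,\sigma,u$ to bound $\chi^{-1}F_{u,v}$ from below by $\tilde{C}\eta$ minus terms of order $\eta$ or $\eta^{1/2}$ with constants independent of $\tilde{C}$. The paper's proof is the same argument with slightly different bookkeeping of the constants.
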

\begin{proof}
A straightforward computation yields
\[
\begin{array}
[c]{l}
D\chi(t,x)=\left[\tilde{C}(T-t)+\tilde{A}\right]\chi(t,x)D\eta(x)=\left[\tilde{C}(T-t)+\tilde{A}\right]\chi(t,x)\frac{2[\eta(x)]^{1/2}}{1+|x|^{2}}x, \medskip\\
|D\chi(t,x)|\leq 4\tilde{A}\frac{[\eta(x)]^{1/2}}{[1+|x|^{2}]^{1/2}}\chi(t,x), \qquad |D^{2}\chi(t,x)|\leq 16(\tilde{A}^{2}+\tilde{A})\frac{\eta(x)}{[1+|x|^{2}]}\chi(t,x).
\end{array}
\]
Since $x\in(Dom\psi)_{r}\subset Int(Dom\psi)$ and $\langle x^{\ast},x\rangle\ge0$ for $x^{\ast}\in\partial\psi(x)$, it follows that
\[
\partial\psi_{\ast}(x,D\chi)=\inf_{x^{\ast}\in\partial\psi(x)}\langle x^{\ast}, D\chi(t,x)\rangle=\left[\tilde{C}(T-t)+\tilde{A}\right]\chi(t,x)\frac{2[\eta(x)]^{1/2}}{1+|x|^{2}}\inf_{x^{\ast}\in\partial\psi(x)}\langle x^{\ast}, x\rangle
\ge0.
\]
Since $b,\sigma$ grow at most linearly at infinity and $u$ is Lipschitz in $x$, uniformly w.r.t. $t$, we have, evaluating at $(t,x)$,
\[
\begin{array}
[c]{l}
F_{u,v}(t,x,\chi,{\displaystyle\frac{\partial\chi}{\partial t}},D\chi,D^{2}\chi)+\partial\psi_{\ast}(x,D\chi)\ge F_{u,v}(t,x,\chi,{\displaystyle\frac{\partial\chi}{\partial t}},D\chi,D^{2}\chi)\medskip\\
= -{\displaystyle\frac{\partial\chi}{\partial t}}-\frac{1}{2}Tr\{\sigma\sigma^{\ast}(t,x)D^{2}\chi\}-\langle b(t,x,u(t,x),0),D\chi\rangle-\tilde{K}[|\chi|+|D\chi|\cdot|\sigma(t,x)|]\medskip\\
\ge \chi(t,x)
\left[\tilde{C}\eta(x)-16(\tilde{A}^{2}+\tilde{A})C\eta(x)-4\tilde{A}C[\eta(x)]^{1/2}-\tilde{K}-4\tilde{A}\tilde{K}C[\eta(x)]^{1/2}\right],
\end{array}
\]
where $C$ is a constant independent of $\tilde{C}$. Since $\eta(x)\ge1$, we can choose $\tilde{C}$ large enough such that the quantity in the brackets is strictly positive. Consequently, taking into account that $\chi(t,x)>0$, we can conclude:
\[min\{\chi(t,x),F_{u,v}(t,x,\chi,\frac{\partial\chi}{\partial t},D\chi,D^{2}\chi)+\partial\psi_{\ast}(x,D\chi)\}>0, \quad \text{ in } [t_{1},T]\times(Dom\psi)_{r}.
\]
\hfill
\end{proof}

\begin{proof}[Proof of Theorem \ref{uniqueness of pde}] We only need to show that $\omega\leq 0$ on $[0,T]\times(Dom\psi)_{r}$ for any $r\in(0,r_{0}]$.
Now let us choose $\tilde{A}$ and $\tilde{C}$ as in Lemma \ref{an exactly supersolution}. Recalling that $\omega=u-v$ is Lipschitz in $x$, uniformly w.r.t. $t$, we remark that
\[\lim\limits_{|x|\rightarrow\infty}|\omega(t,x)|\exp\left\{-\tilde{A}\left[\log\left((|x|^{2}+1)^{1/2}\right)\right]^{2}\right\}=0,
\]
uniformly w.r.t. $t\in[0,T]$. This implies that, for all $\varepsilon>0$, $\left[\omega(t,x)-\varepsilon \chi(t,x)\right]e^{-\tilde{K}(T-t)}$ is bounded from above in $[t_{1},T]\times(Dom\psi)_{r}$, with $\tilde{K}$ as in (\ref{F depend on u and v}). Now we define
\[
M_{\varepsilon}^{r}:=\max\limits_{[t_{1},T]\times(Dom\psi)_{r}}\left[\omega(t,x)-\varepsilon \chi(t,x)\right]e^{-\tilde{K}(T-t)},
\]
and we suppose that the maximum $M_{\varepsilon}^{r}$ is achieved at some point $(t_{0},x_{0})$. We claim that $M_{\varepsilon}^{r}\leq0$ for all $r,\varepsilon>0$. This holds obviously true if $t_{0}=T$. Indeed, $M_{\varepsilon}^{r}=-\varepsilon \chi(T,x)\leq0$. Thus, we can assume that $t_{0}\in[0,T)$. Now we suppose that $M_{\varepsilon}^{r}>0$ for some $r,\varepsilon>0$, we will construct a contradiction. In fact, if we define
\[
\Phi(t,x):=\varepsilon \chi(t,x)+M_{\varepsilon}^{r}e^{\tilde{K}(T-t)}=\varepsilon \chi(t,x)+[\omega(t_{0},x_{0})-\varepsilon \chi(t_{0},x_{0})]e^{-\tilde{K}(t-t_{0})},
\]
we have $\Phi\in C^{1,2}([0,T]\times(Dom\psi)_{r})$, $\Phi(t_{0},x_{0})=\omega(t_{0},x_{0})$ and $\omega(t,x)-\Phi(t,x)\leq0$, for all $(t,x)\in[0,T]\times(Dom\psi)_{r}$. From $M_{\varepsilon}^{r}>0$, we have $\omega(t_{0},x_{0})>\varepsilon \chi(t_{0},x_{0})>0$. Then, from Lemma \ref{viscosity subsolution for one kind of pde} it follows that
\[
\begin{array}
[c]{l}
F_{u,v}(t_{0},x_{0},\Phi,{\displaystyle\frac{\partial\Phi}{\partial t}},D\Phi,D^{2}\Phi)+\partial\psi_{\ast}(x_{0},D\Phi)\leq0.
\end{array}
\]
Moreover, since, at $(t_{0},x_{0})$,
\[
\begin{array}
[c]{l}
0\ge F_{u,v}(t_{0},x_{0},\Phi,{\displaystyle\frac{\partial\Phi}{\partial t}},D\Phi,D^{2}\Phi)+\partial\psi_{\ast}(x_{0},D\Phi)\medskip\\
=-{\displaystyle\frac{\partial\Phi}{\partial t}}-\frac{1}{2}Tr\{\sigma\sigma^{\ast}(t_{0},x_{0})D^{2}\Phi\}-\langle b(t_{0},x_{0},u(t_{0},x_{0}),0),D\Phi\rangle
\medskip\\
\quad-\tilde{K}[|\Phi|+|D\Phi|\cdot|\sigma(t_{0},x_{0})|]+\partial\psi_{\ast}(x_{0},D\Phi)\medskip\\
=-\varepsilon{\displaystyle\frac{\partial\chi}{\partial t}}+\tilde{K}M_{\varepsilon}^{r}e^{\tilde{K}(T-t)}-\varepsilon\frac{1}{2}Tr\{\sigma\sigma^{\ast}(t_{0},x_{0})D^{2}\chi\}-\varepsilon\langle b(t_{0},x_{0},u(t_{0},x_{0}),0),D\chi\rangle
\medskip\\
\quad-\tilde{K}[\varepsilon|\chi|+M_{\varepsilon}^{r}e^{\tilde{K}(T-t)}+\varepsilon|D\chi|\cdot|\sigma(t_{0},x_{0})|]+\varepsilon\partial\psi_{\ast}(x_{0},D\chi)
\medskip\\
=\varepsilon \left[F_{u,v}(t_{0},x_{0},\chi,{\displaystyle\frac{\partial\chi}{\partial t}},D\chi,D^{2}\chi)+\partial\psi_{\ast}(x_{0},D\chi)\right]
\end{array}
\]
we have
\[
F_{u,v}(t_{0},x_{0},\chi,\frac{\partial\chi}{\partial t},D\chi,D^{2}\chi)+\partial\psi_{\ast}(x_{0},D\chi)\leq0,
\]
which contradicts Lemma \ref{an exactly supersolution}. Therefore, $M_{\varepsilon}^{r}\leq0$ which implies that $\omega(t,x)\leq\varepsilon \chi(t,x)$, for
all $(t,x)\in[t_{1},T]\times(Dom\psi)_{r}$. Letting $\varepsilon\rightarrow0$, we get $\omega(t,x)\leq0$ for
all $(t,x)\in[t_{1},T]\times(Dom\psi)_{r}$. Applying successively the similar argument on the interval $[t_{2},t_{1}]$, if necessary, where $t_{2}=(t_{1}-\tilde{A}/\tilde{C})^{+}$, and then if $t_{2}>0$, on $[t_{3},t_{2}]$, where $t_{3}=(t_{2}-\tilde{A}/\tilde{C})^{+}$, etc., we obtain, finally,
\[
\omega(t,x)\leq0,\text{ for all } (t,x)\in[0,T]\times(Dom\psi)_{r}, ~r\in(0,r_{0}].
\]
\hfill
\end{proof}
\section{FBSVIs}
In this section, in order to prepare our existence result for PVI (\ref{PVI}), we study one general kind of FBSVIs in order to give a probabilistic interpretation for the viscosity solution of PVI (\ref{PVI}).

Let $(\Omega,\mathcal{F},\mathbb{P})$ be a complete probability space and endowed with an $\mathbb{R}^{d}$-valued standard Brownian motion $\{B_{t}\}_{t\ge0}$. We denote by $\{\mathcal{F}_{t}\}_{t\ge0}$ the filtration generated by the Brownian motion $B$ and augmented by the class of $P$-null sets of $\mathcal{F}$.

We consider the following FBSVI:
\begin{equation}\label{FBSDE with two operators}
\left\{
\begin{array}
[c]{l}%
dX_{t}+\partial\psi(X_{t})dt\ni b(t,X_{t},Y_{t},Z_{t})dt+\sigma(t,X_{t},Y_{t},Z_{t})dB_{t},\medskip\\
-dY_{t}+\partial\varphi(Y_{t})dt\ni f(t,X_{t},Y_{t},Z_{t})dt-Z_{t}dB_{t},\quad t\in[0,T],\medskip\\
X_{0}=x, \quad Y_{T}=g(X_{T}),
\end{array}
\right.
\end{equation}
where the processes $X,Y,Z$ take values in $\mathbb{R}^{n},\mathbb{R}^{m}$ and $\mathbb{R}^{m\times d}$, respectively, and the functions $b,\sigma,f$ and $g$ satisfy standard assumptions which we will give later. The operator $\partial\psi$ (resp. $\partial\varphi$) is the subdifferential of the
convex l.s.c. function $\psi:\mathbb{R}^{n}\rightarrow
(-\infty,+\infty]$ (resp. $\varphi:\mathbb{R}^{m}\rightarrow
(-\infty,+\infty]$).

Let us introduce some spaces of processes, which will be needed in what follows.

Let $t\in[0,T]$, $k\ge1$. We define $BV([t,T];\mathbb{R}^{k})$ as the space of the functions $u:[t,T]\rightarrow \mathbb{R}^{k}$ with finite total variation on $[t,T]$, denoted by $\updownarrow u\updownarrow_{[t,T]}$. Moreover we endow this space with the norm
\[\|u\|_{BV([t,T];\mathbb{R}^{k})}=|u(t)|+\updownarrow u\updownarrow_{[t,T]}.
\]
We also introduce the space $L^{1}(\Omega;BV([t,T];\mathbb{R}^{k}))$ of the stochastic processes $u:\Omega\times[t,T]\rightarrow\mathbb{R}^{k}$ such that
\[E\|u\|_{BV([t,T];\mathbb{R}^{k})}<\infty.
\]
The space $M^{p}_{k}[t,T],~p\ge 2$, denotes the Hilbert space of $\mathbb{R}^{k}$-valued $\{\mathcal{F}_{s}\}$-progressively measurable processes $\{u(s), ~s\in[t,T]\}$ such that
\[\|u\|_{M^{p}[t,T]}:=\left(E\left(\int_{t}^{T}|u(s)|^{2}ds\right)^{p/2}\right)^{1/p}<\infty.
\]
When $p=2$, we set $\|\cdot\|_{M[t,T]}:=\|\cdot\|_{M^{p}[t,T]}$. For $\lambda\in\mathbb{R}$, we define an equivalent norm on $M^{2}_{k}[t,T]$:
\[\|u\|_{M_{\lambda}[t,T]}:=\left(E\int_{t}^{T}e^{-\lambda s}|u(s)|^{2}ds\right)^{1/2}.
\]
Moreover, let $H[t,T]$ be the subset of $M^{2}_{n}[t,T]$ consisting of all the continuous processes. For $\beta>0$ and $\lambda\in\mathbb{R}$, we denote its completion under the norm
\[\|u\|_{\lambda,\beta,[t,T]}:=e^{-\lambda T}E|u(T)|^{2}+\beta\|u\|_{M_{\lambda}[t,T]}
\]
by  $\bar{H}[t,T]$. The notation $\bar{H}[t,T]$ takes into account that the completion of $H[t,T]$ under the norm $\|\cdot\|_{\lambda,\beta,[t,T]}$ does not depend on $\lambda$ and $\beta$.

Let $S^{2}_{k}[t,T]$ be the set of all continuous $\{\mathcal{F}_{s}\}$-progressively measurable processes $\{u(s), ~s\in[t,T]\}$ which values in $\mathbb{R}^{k}$ such that
\[\|u\|_{S[t,T]}:=\left(E\sup\limits_{t\leq s\leq T}|u(s)|^{2}\right)^{1/2}<\infty.
\]
We also introduce an equivalent norm on $S^{2}_{k}[t,T]$:
\[\|u\|_{S_{\lambda}[t,T]}:=\left(E\sup\limits_{t\leq s\leq T}e^{-\lambda s}|u(s)|^{2}\right)^{1/2}.
\]
In what follows, if $t=0$, we simplify the notations by writing, for example: $M^{2}_{k}:=M^{2}_{k}[0,T], ~H:=H[0,T]$.

Let us give the following definition.
\begin{definition}\label{definition of the solution of FBSDE with two operators}
A quintuple $(X,Y,Z,V,U)$ of processes is called an adapted solution of FBSVI (\ref{FBSDE with two operators}), if the following conditions are satisfied:
\[%
\begin{array}
[c]{rl}%
(a_{1}) & X\in S^{2}_{n},~Y\in S^{2}_{m},~Z\in M^{2}_{m\times d},~V\in S^{2}_{n}\cap L^{1}(\Omega;BV([0,T];\mathbb{R}^{n})),V_{0}=0, ~U\in M^{2}_{m},\medskip\\
(a_{2})
 & X_{t}\in Dom\psi, ~d\mathbb{P}\otimes dt~a.e. ~and~\psi(X)\in L^{1}(\Omega\times[0,T];\mathbb{R}),\medskip\\
 & {\displaystyle\int_{s}^{t}}\langle z-X_{r},dV_{r}\rangle+{\displaystyle\int_{s}^{t}}\psi(X_{r})dr\leq (t-s)\psi(z),~ \text{ for all } z\in\mathbb{R}^{n}, ~ ~0\leq s\leq t\leq T, ~a.s.\medskip\\
(a_{3})& (Y_{t},U_{t})\in\partial\varphi,~d\mathbb{P}\otimes dt~a.e.\text{ on
}\Omega\times[0,T],\medskip\\
(a_{4}) &
X_{t}+V_{t}=x+{\displaystyle\int_{0}^{t}}b(s,X_{s},Y_{s},Z_{s})ds+{\displaystyle\int_{0}^{t}}\sigma(s,X_{s},Y_{s},Z_{s})dB_{s},\medskip\\
& Y_{t}+{\displaystyle\int_{t}^{T}}U_{s}ds=g(X_{T})+{\displaystyle\int
_{t}^{T}}f( s,X_{s},Y_{s},Z_{s})ds-{\displaystyle\int_{t}^{T}}Z_{s}dB_{s},~ t\in[0,T],\;a.s.
\end{array}
\]
\end{definition}
\subsection{Assumptions}
Now we give the following standard assumptions:
\begin{itemize}
\item[$\left(  H_{1}\right)  $] Let $\psi:\mathbb{R}^{n}\rightarrow(-\infty,+\infty]$ be a convex l.s.c.
function with $0\in Int(Dom\psi)$ and $\psi(z)\geq\psi(0)=0$, for all $z\in\mathbb{R}^{n}$. Moreover, the initial point $x$ from (\ref{FBSDE with two operators}) belongs to $Dom\psi$.

\item[$\left(  H_{2}\right)  $] Let $\varphi:\mathbb{R}^{m}\rightarrow(-\infty,+\infty]$ be a convex l.s.c.
function s.t.
$\varphi(y)\geq\varphi(0)=0$ for all $y\in\mathbb{R}^{m}$.

\item[$(H_{3})$] The coefficients $b,\sigma$ and $f$ are defined on $\Omega\times[0,T]\times\mathbb{R}^{n}\times\mathbb{R}^{m}\times\mathbb{R}^{m\times d}$, s.t. $b(\cdot,\cdot,x,y,z)$,\\$\sigma(\cdot,\cdot,x,y,z)$ and $f(\cdot,\cdot,x,y,z)$ are $\{\mathcal{F}_{t}\}$-progressively measurable processes, for all fixed $(x,y,z)\in\mathbb{R}^{n}\times\mathbb{R}^{m}\times\mathbb{R}^{m\times d}$. The coefficient $g$ is defined on $\Omega\times\mathbb{R}^{n}$ and $g(\cdot,x)$ is $\mathcal{F}_{T}$-measurable, for all fixed $x\in\mathbb{R}^{n}$.

\item[$\left(  H_{4}\right)  $] The mapping $y\mapsto f(\omega,t,x,y,z):\mathbb{R}^{m}\rightarrow\mathbb{R}^{m}$ is continuous and there exists a constant $L\ge0$ and $\eta\in M^{2}_{1}$, such that for all $(\omega,t,y)$, $|f(\omega,t,0,y,0)|\leq \eta(\omega,t)+L|y|$ (which implies that $f(\cdot,\cdot,0,0,0)\in M^{2}_{m}$). Moreover, $b(\cdot,\cdot,0,0,0)\in M^{2}_{n}$, $\sigma(\cdot,\cdot,0,0,0)\in M^{2}_{n\times d}$ and $E|g(\cdot,0)|^{2}<\infty$.

\item[$\left(  H_{5}\right)  $] There exist positive constants $K, k_{1}, k_{2}$ and a constant $\gamma\in\mathbb{R}$, such that for all $t,x,x_{1},x_{2}$,\\$y,y_{1},y_{2}$,$z,z_{1},z_{2}$, a.s.
\[
\begin{array}
[c]{rl}
(i) & | b(t,x_{1},y_{1},z_{1})-b(t,x_{2},y_{2},z_{2})| \leq
K(|x_{1}-x_{2}| +| y_{1}-y_{2}|
+|z_{1}-z_{2}| ),\medskip\\
(ii)& |\sigma(t,x_{1},y_{1},z_{1})-\sigma(t,x_{2},y_{2},z_{2})|^{2} \leq
K^{2}(| x_{1}-x_{2}|^{2} +| y_{1}-y_{2}|^{2})
+k_{1}^{2}|z_{1}-z_{2}|^{2},\medskip\\
(iii)& |g(x_{1})-g(x_{2})|\leq
k_{2}| x_{1}-x_{2}|,\medskip\\
(iv) & | f(t,x_{1},y,z_{1})-f(t,x_{2},y,z_{2})| \leq
K(|x_{1}-x_{2}|+|z_{1}-z_{2}| ),\medskip\\
(v) & \langle f(t,x,y_{1},z)-f(t,x,y_{2},z),y_{1}-y_{2}\rangle\leq\gamma|y_{1}-y_{2}|^{2}.
\end{array}
\]
\end{itemize}
\begin{remark}
(1) We shall also introduce the following conditions:
\begin{itemize}
\item[$\left(  H^{\prime}_{4}\right)  $] The mapping $y\mapsto f(\omega,t,x,y,z):\mathbb{R}^{m}\rightarrow\mathbb{R}^{m}$ is continuous and there exist constants $L\ge0$, $\rho_{0}>0$ and a process $\eta\in M^{3+\rho_{0}}_{1}$, such that for all $(\omega,t,y)$, $|f(\omega,t,0,y,0)|\leq \eta(\omega,t)+L|y|$ (which implies that $f(\cdot,\cdot,0,0,0)\in M^{3+\rho_{0}}_{m}$). Moreover, $b(\cdot,\cdot,0,0,0)\in M^{3+\rho_{0}}_{n}$, $\sigma(\cdot,\cdot,0,0,0)\in M^{3+\rho_{0}}_{n\times d}$ and $E|g(\cdot,0)|^{3+\rho_{0}}<\infty$.
\item[$\left(  H^{\prime}_{5}\right)  $] $k_{1}=0$, i.e., $\sigma$ does not depend on $z$.
\end{itemize}

(2) For simplification, we put $b^{0}(s):=b(\cdot,s,0,0,0)$, $\sigma^{0}(s):=\sigma(\cdot,s,0,0,0)$, $f^{0}(s):=f(\cdot,s,0,0,0)$ and $g^{0}:=g(\cdot,0)$.
\end{remark}

These assumptions will be completed by compatibility hypotheses which were introduced by Cvitani\'{c} and Ma \cite{MC-01}:
\[
\begin{array}
[c]{ll}
\left(  C1\right)   & 0\leq k_{1}k_{2}< 1,\medskip\\
\left(  C2\right)   & \text{If } k_{2}=0 \text{ then there exists }\alpha\in(0,1), \text{ s.t. }  ~\mu(\alpha,T)KC_{3}<\bar{\lambda}_{1},\medskip\\
\left(  C3\right)   & \text{If } k_{2}>0 \text{ then there exists }\alpha\in(k^{2}_{1}k^{2}_{2},1), \text{ s.t. } ~\mu(\alpha,T)k_{2}^{2}<1 \text{ and } \bar{\lambda}_{1}\ge\frac{KC_{3}}{k_{2}^{2}}.
\end{array}
\]
Here
\begin{equation}\label{notations}
\begin{array}
[c]{ll}%
\mu(\alpha,T):=K(C_{1}+K)B(\bar{\lambda}_{2},T)+\frac{A(\bar{\lambda}_{2},T)}{\alpha}(KC_{2}+k_{1}^{2}),\medskip\\
A(\lambda,t)=e^{-(\lambda\wedge0)t},~B(\lambda,t)={\displaystyle\int_{0}^{t}}e^{-\lambda s}ds,~t\in[0,T],\medskip\\
\bar{\lambda}_{1}=\lambda-K(2+C_{1}^{-1}+C_{2}^{-1})-K^{2},~\bar{\lambda}_{2}=-\lambda-2\gamma-K(C_{3}^{-1}+C_{4}^{-1}),\medskip\\
\text{for } \lambda\in\mathbb{R} \text{ and constants }C_{1},C_{2},C_{3},C_{4}>0.
\end{array}
\end{equation}
\begin{remark} We mention that in Cvitani\'{c} and Ma \cite{MC-01}, $\bar{\lambda}_{1}=\frac{KC_{3}}{k_{2}^{2}}$ in $(C3)$. However, it turns out that $\bar{\lambda}_{1}\ge\frac{KC_{3}}{k_{2}^{2}}$ is enough. See the proof of Proposition \ref{Priori estimate for X and Y} and Theorem \ref{Solvability of Penalized FBSDE in appendix} in the appendix of our paper.
\end{remark}
\subsection{Penalized FBSDEs and a priori estimates}
In this section, we give a priori estimates on penalized equations related with FBSVI (\ref{FBSDE with two operators}), inspired by \cite{AR-97}, \cite {MC-01} and \cite{PR-98}.

We begin with recalling the Yosida approximation for our convex l.s.c. function $\varphi$:
\[
\varphi_{\varepsilon}(u):=\inf\left\{  \frac{1}{2\varepsilon}|u-v|^{2}%
+\varphi(v):v\in\mathbb{R}^{m}\right\},~ \varepsilon>0,~u\in\mathbb{R}^{m}.
\]
It is well known that the function $\varphi_{\varepsilon}$ is convex and belongs to the class $C^{1}(\mathbb{R}^{m})$. The gradient $\nabla\varphi_{\varepsilon}$ is a
Lipschitz function with Lipschitz constant $1/\varepsilon$. We set
\[
J_{\varepsilon,\varphi}(u)=u-\varepsilon\nabla\varphi_{\varepsilon}(u), ~u\in\mathbb{R}^{m}.
\]
The approximation $\varphi_{\varepsilon}$ has the following properties (see
\cite{B-76} and \cite{PR-98}):

For all $u,v\in\mathbb{R}^{m},$ and $\varepsilon,\delta>0$, we have
\begin{equation}\label{Property of Yosida approximation}
\begin{array}
[c]{ll}%
\left(  a\right)   & \varphi_{\varepsilon}(u)=\frac{\varepsilon}{2}%
|\nabla\varphi_{\varepsilon}(u)|^{2}+\varphi(J_{\varepsilon,\varphi}(u)),\medskip\\
\left(  b\right)   & \left\vert J_{\varepsilon,\varphi}(u)-J_{\varepsilon,\varphi}(v)\right\vert
\leq|u-v|,\medskip\\
\left(  c\right)   & \nabla\varphi_{\varepsilon}(u)\in\partial\varphi
(J_{\varepsilon,\varphi}(u)),\medskip\\
\left(  d\right)   & 0\leq\varphi_{\varepsilon}(u)\leq \langle\nabla\varphi_{\varepsilon
}(u),u\rangle,\medskip\\
\left(  e\right)   & \langle\nabla\varphi_{\varepsilon}(u)-\nabla
\varphi_{\delta}(v),u-v\rangle\geq-(\varepsilon+\delta)
|\nabla\varphi_{\varepsilon}(u)||\nabla\varphi_{\delta}(v)|.
\end{array}
\end{equation}
Moreover for our convex l.s.c. function $\psi$, we have in addition, the following property (see \cite{AR-97,B-76}):
\begin{equation}\label{Property f}
\begin{array}
[c]{l}
(f)\quad \text{ For all } u_{0}\in Int(Dom\psi), \text{ there exist } ~r_{0}>0, ~ M_{0}>0,\text{ such that }\medskip\\
\qquad\qquad r_{0}|\nabla\psi_{\varepsilon}(x)|\leq \langle\nabla\psi_{\varepsilon}(x),x-u_{0}\rangle+M_{0}, ~\text{ for all }~\varepsilon>0,~x\in\mathbb{R}^{n}.
\end{array}
\end{equation}

Let $\varepsilon>0$. We consider the following penalized FBSDE using the Yosida approximation for $\psi$ and $\varphi$:
\begin{equation}\label{Penalized FBSDE with two operators}
\left\{
\begin{array}
[c]{l}%
X_{t}^{\varepsilon}+{\displaystyle\int_{0}^{t}}\nabla\psi_{\varepsilon}(X_{s}^{\varepsilon})ds=x+{\displaystyle\int_{0}^{t}}b(s,X_{s}^{\varepsilon},Y_{s}^{\varepsilon},Z_{s}^{\varepsilon})ds
+{\displaystyle\int_{0}^{t}}\sigma(s,X_{s}^{\varepsilon},Y_{s}^{\varepsilon},Z_{s}^{\varepsilon})dB_{s},\medskip\\
Y_{t}^{\varepsilon}+{\displaystyle\int_{t}^{T}}\nabla\varphi_{\varepsilon}(Y_{s}^{\varepsilon})ds=g(X_{T}^{\varepsilon})+{\displaystyle\int
_{t}^{T}}f( s,X_{s}^{\varepsilon},Y_{s}^{\varepsilon},Z_{s}^{\varepsilon})ds-{\displaystyle\int_{t}^{T}}Z_{s}^{\varepsilon}dB_{s},~t\in[0,T].
\end{array}
\right.
\end{equation}
From Theorem \ref{Solvability of Penalized FBSDE in appendix} (see Appendix), we have
\begin{lemma}\label{Solvability of Penalized FBSDE}
Let the assumptions $(H_{1})$-$(H_{5})$ be satisfied. We also assume $(C1)$ and either $(C2)$ or $(C3)$ hold for some $\lambda,\alpha,C_{1},C_{2},C_{3}$ and $C_{4}=\frac{1-\alpha}{K}$. Then penalized FBSDE (\ref{Penalized FBSDE with two operators}) has a unique adapted solution $(X^{\varepsilon},Y^{\varepsilon},Z^{\varepsilon})\in S^{2}_{n}\times S^{2}_{m}\times M^{2}_{n\times d}$. Moreover, if among the compatibility conditions, only $(C1)$ holds, then penalized  FBSDE (\ref{Penalized FBSDE with two operators}) has a unique adapted solution on $[0,T_{0}]$, but only for $T_{0}>0$ small enough.
\end{lemma}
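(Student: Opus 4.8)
The plan is to reduce the claim to the appendix result Theorem \ref{Solvability of Penalized FBSDE in appendix}, which is precisely the solvability statement for fully coupled FBSDEs with Lipschitz coefficients under the monotonicity-type compatibility conditions $(C1)$ and $(C2)$ or $(C3)$. The only real work is to check that the penalized system (\ref{Penalized FBSDE with two operators}) meets the hypotheses of that theorem with the new coefficients $\tilde b:=b-\nabla\psi_\varepsilon$ and $\tilde f:=f+\nabla\varphi_\varepsilon$, where we absorb the Yosida terms into the drift of the forward equation and the generator of the backward equation respectively. So first I would record that, since $\nabla\psi_\varepsilon$ is globally Lipschitz with constant $1/\varepsilon$ and $\nabla\psi_\varepsilon(0)=0$ (because $0\in \mathrm{Int}(Dom\psi)$ and $\psi(0)=0$ gives $0\in\partial\varphi_\varepsilon^{-1}$-type normalization, more precisely $\nabla\psi_\varepsilon(0)\in\partial\psi(J_{\varepsilon,\psi}(0))$ and the minimizer at $0$ is $0$), the modified drift $\tilde b$ still satisfies $(H_3)$, $(H_4)$ and the Lipschitz bound $(H_5)(i)$ with the same constant $K$ — crucially, the extra Lipschitz constant $1/\varepsilon$ enters only through a change of $K$, but monotonicity in $x$ of $-\nabla\psi_\varepsilon$ means we do not need to enlarge $K$ in the estimates that feed the compatibility constants; similarly $\tilde f$ satisfies $(H_4)$, $(H_5)(iv)$ with the same $K$, and $(H_5)(v)$ with the same $\gamma$, because $\langle-\nabla\varphi_\varepsilon(y_1)+\nabla\varphi_\varepsilon(y_2),y_1-y_2\rangle\le 0$ by monotonicity of $\nabla\varphi_\varepsilon$, so adding it to $f$ does not increase $\gamma$.

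Second, with these observations the compatibility conditions $(C1)$, $(C2)$ or $(C3)$ — which are stated in terms of $K,k_1,k_2,\gamma,\lambda$ and the auxiliary constants $C_1,\dots,C_4$, with $C_4=\frac{1-\alpha}{K}$ as specified — hold for the penalized system uniformly in $\varepsilon$, since $\sigma$ is untouched (so $k_1,k_2$ and the constant $K$ appearing in $(H_5)(ii)$–$(iii)$ are unchanged) and $\gamma$ is unchanged. Thus Theorem \ref{Solvability of Penalized FBSDE in appendix} applies verbatim to (\ref{Penalized FBSDE with two operators}) and yields a unique adapted solution $(X^\varepsilon,Y^\varepsilon,Z^\varepsilon)\in S^2_n\times S^2_m\times M^2_{n\times d}$ on $[0,T]$. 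When only $(C1)$ is assumed, the same appendix result (its small-time part) gives a unique adapted solution on $[0,T_0]$ for $T_0$ small enough, and this $T_0$ can be chosen independently of $\varepsilon$ for the same reason — only $K$ (unchanged, by the monotonicity argument above), $k_1$, $k_2$ enter the smallness threshold. I would then simply invoke these two cases to conclude.

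The main obstacle is the bookkeeping in the first step: one must verify carefully that inserting $-\nabla\psi_\varepsilon$ into $b$ and $+\nabla\varphi_\varepsilon$ into $f$ does not spoil the structural constants $(K,k_1,k_2,\gamma)$ that the compatibility conditions depend on, so that everything holds \emph{uniformly} in $\varepsilon$ (which is what is really needed for the later passage to the limit, even though it is not literally asserted in the lemma). Monotonicity of the subdifferential approximations is exactly what makes this go through: the added terms have the right sign in every inner product that appears in the a priori estimates underlying Theorem \ref{Solvability of Penalized FBSDE in appendix}, so the constant $K$ used there need not be replaced by $K+1/\varepsilon$. Everything else — measurability of $\nabla\psi_\varepsilon(X^\varepsilon)$, integrability, the $C^1$ regularity of $\psi_\varepsilon,\varphi_\varepsilon$ — is routine and already recorded in (\ref{Property of Yosida approximation}) and (\ref{Property f}).
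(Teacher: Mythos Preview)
Your proposal is correct in substance and, at bottom, follows the paper's own route: both simply invoke Theorem~\ref{Solvability of Penalized FBSDE in appendix}. However, you have slightly misread the scope of that appendix result and thereby created work for yourself that is not needed. Theorem~\ref{Solvability of Penalized FBSDE in appendix} is not a general FBSDE existence theorem into which you must fit modified coefficients $\tilde b=b-\nabla\psi_\varepsilon$, $\tilde f=f+\nabla\varphi_\varepsilon$; it is stated \emph{directly} for the penalized system~(\ref{Penalized FBSDE with two operators in appendix}), with the Yosida terms already present. Lemma~\ref{Solvability of Penalized FBSDE} is then nothing more than the specialization $t=0$, $\xi=x$, and the paper's ``proof'' is literally the one line preceding the lemma.

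Your monotonicity discussion is morally right and is indeed the mechanism behind the appendix proof (see Lemma~\ref{Difference of two solutions} and the contraction argument in Theorem~\ref{Solvability of Penalized FBSDE in appendix}, where $\langle x_1-x_2,\nabla\psi_\varepsilon(x_1)-\nabla\psi_\varepsilon(x_2)\rangle\ge 0$ and the analogous inequality for $\varphi_\varepsilon$ are used to drop the Yosida contributions from the estimates). But one literal statement you make is false: $\tilde b$ does \emph{not} satisfy $(H_5)(i)$ with the same constant $K$; its Lipschitz constant in $x$ is $K+1/\varepsilon$. What saves the day is not that $\tilde b$ is $K$-Lipschitz, but that the a~priori estimates underlying the contraction are derived with the $\nabla\psi_\varepsilon$ term kept separate and handled by monotonicity, so the effective constants entering $(C1)$--$(C3)$ remain $K,k_1,k_2,\gamma$ independently of~$\varepsilon$. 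If you were genuinely applying an off-the-shelf FBSDE theorem to $(\tilde b,\sigma,\tilde f,g)$, you would have to redo its proof to exploit this structure---which is exactly what the appendix does. So your ``bookkeeping obstacle'' is already absorbed into Theorem~\ref{Solvability of Penalized FBSDE in appendix}, and the lemma follows immediately.
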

\begin{remark}
(1) In our paper we only discuss the assumption $(C1)$ in combination with either $(C2)$ or $(C3)$. For the case that only $(C1)$ holds and $T_{0}>0$ small enough, all our results can be obtained similarly, so we omit it.

(2)As explained in Remark 3.2 \cite{MC-01}, the compatibility conditions do not include the case of an arbitrary $T$ since the constants introduced in
(\ref{notations}) depend on $T$. However, under the condition $(C1)$ , if $\gamma\leq -\Upsilon$, for some $\Upsilon>0$ depending only on $K,k_{1},k_{2}$, then all our results holds for arbitrary $T$.
\end{remark}

\begin{proposition}\label{Difference 2}
Under the assumptions of Lemma \ref{Solvability of Penalized FBSDE}, if $(X^{t,x},Y^{t,x},Z^{t,x},V^{t,x},U^{t,x})$ (resp. $(\tilde{X}^{t,\tilde{x}},\tilde{Y}^{t,\tilde{x}},\tilde{Z}^{t,\tilde{x}},\tilde{V}^{t,\tilde{x}},\tilde{U}^{t,\tilde{x}})$) is a solution of the FBSVI with initial time $t$ and parameters $(x,b,\sigma,f,g)$ (resp. $(\tilde{x},\tilde{b},\tilde{\sigma},\tilde{f},\tilde{g})$), then there exists a constant $C$ independent of $(t,x,\tilde{x})$, such that
\begin{equation}\label{Difference of two solutions of two operators}
\begin{array}
[c]{l}
\quad  E\Big\{\sup\limits_{t\leq s\leq T}|X_{s}^{t,x}-\tilde{X}_{s}^{t,\tilde{x}}|^{2}+\sup\limits_{t\leq s\leq T}|Y_{s}^{t,x}-\tilde{Y}_{s}^{t,\tilde{x}}|^{2}+{\displaystyle\int_{t}^{T}}|Z_{s}^{t,x}-\tilde{Z}_{s}^{t,\tilde{x}}|^{2}ds\Big\}\leq C\Delta_{1},
\end{array}
\end{equation}
where
\[
\begin{array}
[c]{l}%
\Delta_{1}=e^{-\lambda t}|x-\tilde{x}|^{2}+E|g(X_{T})-\tilde{g}(X_{T})|^{2}+E{\displaystyle\int_{t}^{T}}|b-\tilde{b}|^{2}(s,X_{s}^{t,x},Y_{s}^{t,x},Z_{s}^{t,x})ds
\medskip\\
\qquad+E{\displaystyle\int_{t}^{T}}|f-\tilde{f}|^{2}(s,X_{s}^{t,x},Y_{s}^{t,x},Z_{s}^{t,x})ds
+E{\displaystyle\int_{t}^{T}}|\sigma-\tilde{\sigma}|^{2}(s,X_{s}^{t,x},Y_{s}^{t,x},Z_{s}^{t,x})ds.
\end{array}
\]
\end{proposition}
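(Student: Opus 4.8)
The plan is to obtain the estimate \eqref{Difference of two solutions of two operators} by writing the equations for the differences $\bar X:=X^{t,x}-\tilde X^{t,\tilde x}$, $\bar Y:=Y^{t,x}-\tilde Y^{t,\tilde x}$, $\bar Z:=Z^{t,x}-\tilde Z^{t,\tilde x}$, $\bar V:=V^{t,x}-\tilde V^{t,\tilde x}$, $\bar U:=U^{t,x}-\tilde U^{t,\tilde x}$, and applying It\^o's formula to $e^{-\lambda s}|\bar X_s|^2$ on the forward part and to $e^{-\lambda s}|\bar Y_s|^2$ (or $|\bar Y_s|^2$ directly) on the backward part, exactly along the lines of the a priori estimates established in the appendix (Theorem \ref{Solvability of Penalized FBSDE in appendix}) and of Lemma \ref{Solvability of Penalized FBSDE}. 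First I would split each coefficient difference, e.g. $b(s,X^{t,x}_s,Y^{t,x}_s,Z^{t,x}_s)-\tilde b(s,\tilde X^{t,\tilde x}_s,\tilde Y^{t,\tilde x}_s,\tilde Z^{t,\tilde x}_s)$, into the ``Lipschitz in the state'' part $b(s,X^{t,x}_s,\dots)-b(s,\tilde X^{t,\tilde x}_s,\dots)$ controlled by $(H_5)(i)$, $(ii)$, $(iv)$, $(v)$, plus the ``frozen'' part $(b-\tilde b)(s,X^{t,x}_s,Y^{t,x}_s,Z^{t,x}_s)$, which is exactly what enters $\Delta_1$; and similarly for $f$, $\sigma$ and the terminal condition $g$.

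The crucial sign-cancellation is the monotonicity of the subdifferentials. For the forward variational inequality, property $(a_2)$ of Definition \ref{definition of the solution of FBSDE with two operators} gives $\int_s^t\langle z-X_r,dV_r\rangle+\int_s^t\psi(X_r)\,dr\le(t-s)\psi(z)$; testing this inequality for $(X^{t,x},V^{t,x})$ against $z=\tilde X^{t,\tilde x}_r$ and symmetrically, and adding, yields $\langle \bar X_r,d\bar V_r\rangle\le 0$, so the $dV$-terms appearing after It\^o's formula have the right sign and can be dropped. For the backward part, $(Y,U),(\tilde Y,\tilde U)\in\partial\varphi$ and monotonicity of $\partial\varphi$ give $\langle \bar U_r,\bar Y_r\rangle\ge 0$, again with the favorable sign. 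The remaining terms are handled by Young's inequality with the constants $C_1,C_2,C_3,C_4$, the Lipschitz constants $K,k_1,k_2$ and the monotonicity constant $\gamma$, producing the threshold parameters $\bar\lambda_1,\bar\lambda_2,\mu(\alpha,T)$ of \eqref{notations}; the compatibility hypotheses $(C1)$ together with $(C2)$ or $(C3)$ are precisely what make the resulting linear system in $E\sup|\bar X|^2$, $E\sup|\bar Y|^2$, $E\int|\bar Z|^2$ closeable with a constant $C$ independent of $t,x,\tilde x$ (the $e^{-\lambda t}$ weight on $|x-\tilde x|^2$ in $\Delta_1$ is exactly the weight coming from the $e^{-\lambda s}$ in the It\^o computation). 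Finally, the $\sup$-norms on $\bar X$ and $\bar Y$ are recovered from the $M^2$-estimates via the Burkholder--Davis--Gundy inequality applied to the stochastic integrals $\int\sigma\,dB$ and $\int Z\,dB$.

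The main obstacle I expect is not the monotonicity bookkeeping but the coupling: because the forward estimate for $\bar X$ involves $\bar Y$ and $\bar Z$ through $b,\sigma$, and the backward estimate for $\bar Y,\bar Z$ involves $\bar X$ through $f,g$, one cannot estimate the two blocks separately; one must either iterate a contraction-type argument on a suitable weighted norm (as in the fixed-point proof of existence) or, more directly, add the two It\^o identities with a carefully chosen weight $\lambda$ so that all the cross terms are absorbed. Keeping track of which constant multiplies which term so that the final coefficient of $E\int_t^T|\bar Z|^2$ stays strictly positive — this is where $(C1)$, $(C2)$/$(C3)$ and the bound $k_1k_2<1$ are genuinely used — is the delicate point; once the linear system is shown to be invertible with a uniformly bounded inverse, the constant $C$ in \eqref{Difference of two solutions of two operators} follows and, as in the remark after Lemma \ref{Solvability of Penalized FBSDE}, it depends only on $K,k_1,k_2,\gamma,T$ and the chosen auxiliary constants, hence is independent of $(t,x,\tilde x)$.
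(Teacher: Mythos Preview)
Your proposal is correct and follows essentially the same route as the paper: derive the monotonicity inequalities $\int\langle \bar X_r,d\bar V_r\rangle\ge 0$ and $\langle \bar Y_r,\bar U_r\rangle\ge 0$ from $(a_2)$, $(a_3)$, then reproduce the weighted It\^o estimates of Proposition \ref{Priori estimate for X and Y} (the paper's (\ref{Estimate 13})--(\ref{Estimate 16})) with the extra ``frozen'' terms $(b-\tilde b)$, $(\sigma-\tilde\sigma)$, $(f-\tilde f)$, $(g-\tilde g)$, close the coupled system via $(C1)$ together with $(C2)$ or $(C3)$, and upgrade to sup-norms by BDG. Two small remarks: the sign you wrote is flipped (one gets $\int\langle \bar X_r,d\bar V_r\rangle\ge 0$, which is indeed the favorable sign after It\^o), and testing $(a_2)$ against the \emph{process} $\tilde X^{t,\tilde x}_r$ rather than a constant $z$ requires the equivalent integral form of $(a_2)$, which the paper invokes via Proposition 1.2 of \cite{AR-97}.
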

\begin{proof}
From Definition \ref{definition of the solution of FBSDE with two operators} $(a_{2})$, it follows that
\[
{\displaystyle\int_{a}^{b}}\langle z-X_{r}^{t,x},dV_{r}^{t,x}\rangle+{\displaystyle\int_{a}^{b}}\psi(X_{r}^{t,x})dr\leq (b-a)\psi(z),~ z\in\mathbb{R}^{n},  ~t\leq a\leq b\leq T,
\]
Recalling Proposition 1.2 of \cite{AR-97}, we know that it is equivalent to
\[
{\displaystyle\int_{a}^{b}}\langle y_{r}-X_{r}^{t,x},dV_{r}^{t,x}\rangle+{\displaystyle\int_{a}^{b}}\psi(X_{r}^{t,x})dr\leq {\displaystyle\int_{a}^{b}}\psi(y_{r})dr,
~ y\in C([t,T];\mathbb{R}^{n}),  ~t\leq a\leq b\leq T.
\]
Consequently, we have
\[
\begin{array}
[c]{l}%
{\displaystyle\int_{a}^{b}}\langle \tilde{X}_{r}^{t,\tilde{x}}-X_{r}^{t,x},dV_{r}^{t,x}\rangle+{\displaystyle\int_{a}^{b}}\psi(X_{r})dr\leq {\displaystyle\int_{a}^{b}}\psi(\tilde{X}_{r}^{t,\tilde{x}})dr,
~t\leq a\leq b\leq T,\medskip\\
{\displaystyle\int_{a}^{b}}\langle X_{r}^{t,x}-\tilde{X}_{r}^{t,\tilde{x}},d\tilde{V}_{r}^{t,x}\rangle+{\displaystyle\int_{a}^{b}}\psi(\tilde{X}_{r}^{t,\tilde{x}})dr\leq {\displaystyle\int_{a}^{b}}\psi(X_{r}^{t,x})dr,
~t\leq a\leq b\leq T,
\end{array}
\]
which yields
\begin{equation}\label{equation 19}
{\displaystyle\int_{a}^{b}}\langle X_{r}^{t,x}-\tilde{X}_{r}^{t,\tilde{x}},dV_{r}-d\tilde{V}_{r}^{t,x}\rangle\ge 0.
\end{equation}
Moreover, from $(Y,U),(\tilde{Y},\tilde{U})\in\partial\varphi$, it follows
\begin{equation}\label{equation 20}
\langle Y-\tilde{Y},U-\tilde{U}\rangle\ge 0.
\end{equation}
Using (\ref{equation 19}) and (\ref{equation 20}), similarly to the estimates (\ref{Estimate 5})-(\ref{Estimate 8}) of Proposition \ref{Priori estimate for X and Y} (see the appendix), it follows
\begin{equation}\label{Estimate 13}
\begin{array}
[c]{l}%
e^{-\lambda T}E|X_{T}^{t,x}-\tilde{X}_{T}^{t,\tilde{x}}|^{2}+\bar{\lambda}_{1}^{\delta}\|X^{t,x}-\tilde{X}^{t,\tilde{x}}\|^{2}_{M_{\lambda}[t,T]}\leq K[C_{1}+K(1+\delta)]\|Y^{t,x}-\tilde{Y}^{t,\tilde{x}}\|^{2}_{M_{\lambda}[t,T]}\medskip\\
\qquad\qquad\qquad+[KC_{2}+k_{1}^{2}(1+\delta)]\|Z^{t,x}-\tilde{Z}^{t,\tilde{x}}\|^{2}_{M_{\lambda}[t,T]}+e^{-\lambda t}|x-\tilde{x}|^{2}\medskip\\
\qquad
+\frac{1}{\delta}\|(b-\tilde{b})(X^{t,x},Y^{t,x},Z^{t,x})\|_{M_{\lambda}[t,T]}
+(1+\frac{1}{\delta})\|(\sigma-\tilde{\sigma})(X^{t,x},Y^{t,x},Z^{t,x})\|_{M_{\lambda}[t,T]},
\end{array}
\end{equation}
\begin{equation}\label{Estimate 15}
\begin{array}
[c]{l}%
\|Y^{t,x}-\tilde{Y}^{t,\tilde{x}}\|^{2}_{M_{\lambda}[t,T]}\leq B(\bar{\lambda}_{2}^{\delta},T)\Big\{
k_{2}^{2}(1+\delta)e^{-\lambda T}E|X_{T}^{t,x}-\tilde{X}_{T}^{t,\tilde{x}}|^{2}+KC_{3}\|X^{t,x}-\tilde{X}^{t,\tilde{x}}\|^{2}_{M_{\lambda}[t,T]}\medskip\\
\qquad\qquad
\frac{1}{\delta}\|(f-\tilde{f})(X^{t,x},Y^{t,x},Z^{t,x})\|_{M_{\lambda}[t,T]}+(1+\frac{1}{\delta})e^{-\lambda T}E|g(X_{T}^{t,x})-\tilde{g}(X_{T}^{t,x})|^{2}\Big\},
\end{array}
\end{equation}
and
\begin{equation}\label{Estimate 16}
\begin{array}
[c]{l}%
\|Z^{t,x}-\tilde{Z}^{t,\tilde{x}}\|^{2}_{M_{\lambda}[t,T]}\leq \frac{A(\bar{\lambda}_{2}^{\delta},T)}{\alpha}\Big\{
k_{2}^{2}(1+\delta)e^{-\lambda T}E|X_{T}^{t,x}-\tilde{X}_{T}^{t,\tilde{x}}|^{2}+KC_{3}\|X^{t,x}-\tilde{X}^{t,\tilde{x}}\|^{2}_{M_{\lambda}[t,T]}\medskip\\
\qquad\qquad
\frac{1}{\delta}\|(f-\tilde{f})(X^{t,x},Y^{t,x},Z^{t,x})\|_{M_{\lambda}[t,T]}+(1+\frac{1}{\delta})e^{-\lambda T}E|g(X_{T}^{t,x})-\tilde{g}(X_{T}^{t,x})|^{2}\Big\}.
\end{array}
\end{equation}
Then using the same argument as in Proposition \ref{Priori estimate for X and Y}, we obtain our results.\hfill
\end{proof}
\begin{remark}\label{L2 estimates of the solution of fbsvi}
Putting $(X^{t,x},Y^{t,x},Z^{t,x},V^{t,x},U^{t,x})=(0,0,0,0,0)$ which is the solution of FBSVI with initial time $t$ and parameters $(0,0,0,0,0)$, we see from Proposition \ref{Difference 2} that
there exists a constant $C$ independent of $(t,\tilde{x})$, such that
\begin{equation}\label{L2 estimates}
\begin{array}
[c]{l}
\quad  E\Big\{\sup\limits_{t\leq s\leq T}|\tilde{X}_{s}^{t,\tilde{x}}|^{2}+\sup\limits_{t\leq s\leq T}|\tilde{Y}_{s}^{t,\tilde{x}}|^{2}+{\displaystyle\int_{t}^{T}}|\tilde{Z}_{s}^{t,\tilde{x}}|^{2}ds\Big\}\leq C\Delta_{2},
\end{array}
\end{equation}
where
\[
\begin{array}
[c]{l}%
\Delta_{2}=e^{-\lambda t}|\tilde{x}|^{2}+E|\tilde{g}^{0}|^{2}+E{\displaystyle\int_{t}^{T}}|\tilde{b}^{0}(s)|^{2}ds+E{\displaystyle\int_{t}^{T}}|\tilde{f}^{0}(s)|^{2}ds
+E{\displaystyle\int_{t}^{T}}|\tilde{\sigma}^{0}(s)|^{2}ds.
\end{array}
\]
\end{remark}

\begin{proposition}\label{Lipschitz sequence 1}
Let the assumptions $(H_{1})$-$(H_{5})$ be satisfied. We also assume $(C1)$ and either $(C2)$ or $(C3)$ hold for some $\lambda,\alpha,C_{1},C_{2},C_{3}$ and $C_{4}=\frac{1-\alpha}{K}$. Then for all $\varepsilon_{1},\varepsilon_{2}>0$,
\begin{equation}\label{Lipschitz sequence equation 1}
\begin{array}
[c]{l}
\quad E\Big\{\sup\limits_{0\leq t\leq T}|X_{t}^{\varepsilon_{1}}-X_{t}^{\varepsilon_{2}}|^{2}
+\sup\limits_{0\leq t\leq T}|Y_{t}^{\varepsilon_{1}}-Y_{t}^{\varepsilon_{2}}|^{2}
+{\displaystyle\int_{0}^{T}}|Z_{s}^{\varepsilon_{1}}-Z_{s}^{\varepsilon_{2}}|^{2}ds\Big\}\medskip\\
\leq C(\varepsilon_{1}+\varepsilon_{2})
E\left[{\displaystyle\int_{0}^{T}}|\nabla\psi_{\varepsilon_{1}}(X^{\varepsilon_{1}}_{s})|
|\nabla\psi_{\varepsilon_{2}}(X^{\varepsilon_{2}}_{s})|ds+
{\displaystyle\int_{0}^{T}}|\nabla\varphi_{\varepsilon_{1}}(Y^{\varepsilon_{1}}_{s})|
|\nabla\varphi_{\varepsilon_{2}}(Y^{\varepsilon_{2}}_{s})|ds\right],
\end{array}
\end{equation}
where $C$ is a constant which does not depend on $\varepsilon_{1}$ nor on $\varepsilon_{2}$.
\end{proposition}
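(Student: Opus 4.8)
The plan is to apply Itô's formula to $e^{-\lambda t}|X^{\varepsilon_1}_t-X^{\varepsilon_2}_t|^2$ and to $e^{-\lambda t}|Y^{\varepsilon_1}_t-Y^{\varepsilon_2}_t|^2$, exactly as in the a priori estimates of Proposition \ref{Priori estimate for X and Y} and Proposition \ref{Difference 2}, but now tracking the extra error terms produced by the Yosida penalization instead of treating them as part of the driver. Write $\hat X=X^{\varepsilon_1}-X^{\varepsilon_2}$, $\hat Y=Y^{\varepsilon_1}-Y^{\varepsilon_2}$, $\hat Z=Z^{\varepsilon_1}-Z^{\varepsilon_2}$, and similarly $\hat b,\hat\sigma,\hat f$ for the differences of the coefficients evaluated along the two solutions. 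In the forward Itô expansion the cross term $-2\langle \hat X_s,\nabla\psi_{\varepsilon_1}(X^{\varepsilon_1}_s)-\nabla\psi_{\varepsilon_2}(X^{\varepsilon_2}_s)\rangle\,ds$ appears; by property (e) in (\ref{Property of Yosida approximation}) (applied to $\psi$ in place of $\varphi$) this is bounded below by $-2(\varepsilon_1+\varepsilon_2)|\nabla\psi_{\varepsilon_1}(X^{\varepsilon_1}_s)|\,|\nabla\psi_{\varepsilon_2}(X^{\varepsilon_2}_s)|$, so it contributes a term of the order of the right-hand side of (\ref{Lipschitz sequence equation 1}). The analogous backward term $-2\langle\hat Y_s,\nabla\varphi_{\varepsilon_1}(Y^{\varepsilon_1}_s)-\nabla\varphi_{\varepsilon_2}(Y^{\varepsilon_2}_s)\rangle\,ds$ is handled identically using (e) for $\varphi$.

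Next I would reproduce the chain of inequalities (\ref{Estimate 13})--(\ref{Estimate 16}): Lipschitz continuity of $b,f$ and the Lipschitz-type bound on $\sigma$ from $(H_5)$, the terminal condition $\hat Y_T=g(X^{\varepsilon_1}_T)-g(X^{\varepsilon_2}_T)$ controlled via $(H_5)(iii)$, and the monotonicity $(v)$ of $f$ in $y$, all combine to give, for suitable $\delta>0$ and the constants $C_1,\dots,C_4$ from (\ref{notations}),
\[
\begin{array}{l}
e^{-\lambda T}E|\hat X_T|^2+\bar\lambda_1^\delta\|\hat X\|^2_{M_\lambda}\leq K[C_1+K(1+\delta)]\|\hat Y\|^2_{M_\lambda}+[KC_2+k_1^2(1+\delta)]\|\hat Z\|^2_{M_\lambda}\medskip\\
\qquad\qquad+2(\varepsilon_1+\varepsilon_2)E{\displaystyle\int_0^T}|\nabla\psi_{\varepsilon_1}(X^{\varepsilon_1}_s)||\nabla\psi_{\varepsilon_2}(X^{\varepsilon_2}_s)|\,ds,
\end{array}
\]
together with the matching bounds for $\|\hat Y\|^2_{M_\lambda}$ and $\|\hat Z\|^2_{M_\lambda}$ in terms of $e^{-\lambda T}E|\hat X_T|^2$, $\|\hat X\|^2_{M_\lambda}$ and the backward penalization error $2(\varepsilon_1+\varepsilon_2)E\int_0^T|\nabla\varphi_{\varepsilon_1}(Y^{\varepsilon_1}_s)||\nabla\varphi_{\varepsilon_2}(Y^{\varepsilon_2}_s)|\,ds$. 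Here $\bar\lambda_1^\delta,\bar\lambda_2^\delta$ are the $\delta$-perturbed versions of $\bar\lambda_1,\bar\lambda_2$, and since the original inequalities in (\ref{notations}) are strict, one may fix $\delta$ small enough that the compatibility conditions $(C1)$ and $(C2)$ or $(C3)$ still hold with these perturbed constants.

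Then I would substitute the $\hat Y$- and $\hat Z$-estimates into the $\hat X$-estimate exactly as in the proof of Proposition \ref{Priori estimate for X and Y}: the factor multiplying $\|\hat X\|^2_{M_\lambda}$ on the resulting right-hand side is essentially $\mu(\alpha,T)$ times the relevant Lipschitz constant, which the compatibility condition makes strictly smaller than the coefficient $\bar\lambda_1$ on the left, so $\|\hat X\|^2_{M_\lambda}$, $e^{-\lambda T}E|\hat X_T|^2$, $\|\hat Y\|^2_{M_\lambda}$ and $\|\hat Z\|^2_{M_\lambda}$ are all bounded by a constant (independent of $\varepsilon_1,\varepsilon_2$) times $(\varepsilon_1+\varepsilon_2)$ times the bracketed expectation in (\ref{Lipschitz sequence equation 1}). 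Finally, to pass from these $M_\lambda$-norms to the $\sup$-norms on the left of (\ref{Lipschitz sequence equation 1}), I would go back to the Itô expansions, use the Burkholder--Davis--Gundy inequality on the stochastic integrals $\int\langle\hat X_s,\hat\sigma_s\,dB_s\rangle$ and $\int\langle\hat Y_s,\hat Z_s\,dB_s\rangle$, absorb the resulting $\varepsilon$-small martingale bracket terms, and close the estimate — this is the standard step turning mean-square bounds into $S^2$-bounds for FBSDEs. I expect the main obstacle to be purely bookkeeping: verifying that the perturbation by $\delta$ does not destroy $(C1)$--$(C3)$ and that every occurrence of the penalization error genuinely carries the prefactor $\varepsilon_1+\varepsilon_2$ rather than, say, $\varepsilon_1^2+\varepsilon_2^2$ or a cross term without the needed smallness; the rest is a line-by-line rerun of Propositions \ref{Priori estimate for X and Y} and \ref{Difference 2}.
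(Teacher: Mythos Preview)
Your plan is correct and follows essentially the same route as the paper: apply It\^o's formula to the weighted squares of $\hat X$ and $\hat Y$, use property~(e) of the Yosida approximation for both $\psi$ and $\varphi$ to turn the cross terms into the $(\varepsilon_1+\varepsilon_2)$-weighted penalization errors, substitute the backward estimates into the forward one, invoke the compatibility conditions to absorb the $\hat X$-terms, and finish with BDG. The only difference is that you carry the $\delta$-perturbed constants $\bar\lambda_1^\delta,\bar\lambda_2^\delta$ throughout, whereas the paper works directly with the unperturbed estimates (its (\ref{Estimate 9})--(\ref{Estimate 12}), analogous to the appendix estimates (\ref{Estimate 1})--(\ref{Estimate 4})): since here the initial data and the coefficients $b,\sigma,f,g$ coincide for the two solutions, the only ``extra'' terms are the penalization errors, which do not involve $\hat X,\hat Y,\hat Z$ and hence require no Young-inequality splitting. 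Your $\delta$-route still works, it is just slightly more bookkeeping than necessary.
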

\begin{proof}
We apply It\^{o}'s formula to $\left(e^{-\lambda s}e^{-\lambda^{\prime}(t-s)}|X_{s}^{\varepsilon_{1}}-X_{s}^{\varepsilon_{2}}|^{2}\right)_{0\leq s\leq t}$. From $\langle\nabla\psi_{\varepsilon_{1}}(u)-\nabla
\psi_{\varepsilon_{2}}(v),u-v\rangle\geq-(\varepsilon_{1}+\varepsilon_{2})
|\nabla\psi_{\varepsilon_{1}}(u)||\nabla\psi_{\varepsilon_{2}}(v)|$ ( see (\ref{Property of Yosida approximation})-(e) ), similarly to Lemma 5.1 \cite{MC-01}, it follows that
\begin{equation}\label{Estimate 9}
\begin{array}
[c]{l}%
e^{-\lambda T}E|X_{T}^{\varepsilon_{1}}-X_{T}^{\varepsilon_{2}}|^{2}+\bar{\lambda}_{1}\|X^{\varepsilon_{1}}-X^{\varepsilon_{2}}\|^{2}_{M_{\lambda}}\leq K(C_{1}+K)\|Y^{\varepsilon_{1}}-Y^{\varepsilon_{2}}\|^{2}_{M_{\lambda}}\medskip\\
\qquad+(KC_{2}+k_{1}^{2})\|Z^{\varepsilon_{1}}-Z^{\varepsilon_{2}}\|^{2}_{M_{\lambda}}
+2(\varepsilon_{1}+\varepsilon_{2})E{\displaystyle\int_{0}^{T}}e^{-\lambda s}|\nabla\psi_{\varepsilon_{1}}(X_{s}^{\varepsilon_{1}})||\nabla\psi_{\varepsilon_{2}}(X_{s}^{\varepsilon_{2}})|ds,
\end{array}
\end{equation}
where $\bar{\lambda}_{1}=\lambda-K(2+C^{-1}_{1}+C^{-1}_{2})-K^{2}$ and $C_{1},C_{2}$ are positive constants.

We apply again It\^{o}'s formula but now to $\left( e^{-\lambda s}e^{-\lambda^{\prime} (s-t)}|Y_{s}^{\varepsilon_{1}}-Y_{s}^{\varepsilon_{2}}|^{2}\right)_{t\leq s\leq T}$.
Observing that $\langle\nabla\varphi_{\varepsilon_{1}}(u)-\nabla
\varphi_{\varepsilon_{2}}(v),u-v\rangle\geq-(\varepsilon_{1}+\varepsilon_{2})
|\nabla\varphi_{\varepsilon_{1}}(u)||\nabla\varphi_{\varepsilon_{2}}(v)|$, we obtain
\begin{equation}\label{Estimate 11}
\begin{array}
[c]{ll}%
\|Y^{\varepsilon_{1}}-Y^{\varepsilon_{2}}\|^{2}_{M_{\lambda}}\leq B(\bar{\lambda}_{2},T)\Big[
k_{2}^{2}e^{-\lambda T}E|X_{T}^{\varepsilon_{1}}-X_{T}^{\varepsilon_{2}}|^{2}+KC_{3}\|Y^{\varepsilon_{1}}-Y^{\varepsilon_{2}}\|^{2}_{M_{\lambda}}\medskip\\
\qquad\qquad\qquad\qquad
+2(\varepsilon_{1}+\varepsilon_{2})E{\displaystyle\int_{0}^{T}}e^{-\lambda s}|\nabla\varphi_{\varepsilon_{1}}(Y_{s}^{\varepsilon_{1}})||\nabla\varphi_{\varepsilon_{2}}(Y_{s}^{\varepsilon_{2}})|ds\Big],
\end{array}
\end{equation}
\begin{equation}\label{Estimate 12}
\begin{array}
[c]{ll}%
\|Z^{\varepsilon_{1}}-Z^{\varepsilon_{2}}\|^{2}_{M_{\lambda}}\leq \frac{A(\bar{\lambda}_{2},T)}{\alpha}\Big[
k_{2}^{2}e^{-\lambda T}E|X_{T}^{\varepsilon_{1}}-X_{T}^{\varepsilon_{2}}|^{2}+KC_{3}\|Y^{\varepsilon_{1}}-Y^{\varepsilon_{2}}\|^{2}_{M_{\lambda}}\medskip\\
\qquad\qquad\qquad\qquad
+2(\varepsilon_{1}+\varepsilon_{2})E{\displaystyle\int_{0}^{T}}e^{-\lambda s}|\nabla\varphi_{\varepsilon_{1}}(Y_{s}^{\varepsilon_{1}})||\nabla\varphi_{\varepsilon_{2}}(Y_{s}^{\varepsilon_{2}})|ds\Big],
\end{array}
\end{equation}
where $\bar{\lambda}_{2}=-\lambda-2\gamma-K(C^{-1}_{3}+C^{-1}_{4})$ and $C_{3},C_{4}$ are positive constants.
Now from (\ref{Estimate 9})-(\ref{Estimate 12}) as well as $(C1)$,$(C2)$ or $(C1)$,$(C3)$, by using Burkholder-Davis-Gundy (BDG) inequality, we check that there exists a constant $C$ independent of $\varepsilon_{1}$ and $\varepsilon_{2}$, such that (\ref{Lipschitz sequence equation 1}) holds.\hfill
\end{proof}
\subsection{$L^{p}$-estimates for the penalized equations}
We begin our study with the $L^{2}$-estimates for penalized FBSDE (\ref{Penalized FBSDE with two operators}).

\begin{proposition}\label{proposition of difference dominated by increasing function}
Let the assumptions $(H_{1})$-$(H_{5})$ be satisfied. We also assume $(C1)$ and either $(C2)$ or $(C3)$ hold for some $\lambda,\alpha,C_{1},C_{2},C_{3}$ and $C_{4}=\frac{1-\alpha}{K}$. Then
\begin{equation}\label{difference dominated by increasing function}
\begin{array}
[c]{l}%
\|X^{\varepsilon,t,x_{1}}-X^{\varepsilon,t,x_{2}}\|^{2}_{S[t,T]}
+\|Y^{\varepsilon,t,x_{1}}-Y^{\varepsilon,t,x_{2}}\|^{2}_{S[t,T]}\medskip\\
\qquad \qquad\qquad+\|Z^{\varepsilon,t,x_{1}}-Z^{\varepsilon,t,x_{2}}\|^{2}_{M[t,T]}\leq C_{T}|x_{1}-x_{2}|^{2},~x_{1},x_{2}\in\mathbb{R}^{n},
\end{array}
\end{equation}
with a constant $C_{T}$ which is independent of $(t,x_{1},x_{2})$ and $\varepsilon$.
\end{proposition}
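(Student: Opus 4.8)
The plan is to derive the Lipschitz-in-$x$ estimate for the penalized system directly from the a priori estimates already established, applying them to the difference of two solutions started from $x_1$ and $x_2$ at time $t$. First I would write down the equations satisfied by the differences $\bar X := X^{\varepsilon,t,x_1}-X^{\varepsilon,t,x_2}$, $\bar Y := Y^{\varepsilon,t,x_1}-Y^{\varepsilon,t,x_2}$, $\bar Z := Z^{\varepsilon,t,x_1}-Z^{\varepsilon,t,x_2}$, noting that here the two penalized systems share the \emph{same} coefficients $(b,\sigma,f,g)$ and the same $\varepsilon$, so the only inhomogeneous term is the initial displacement $x_1-x_2$ in the forward equation. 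Crucially, by monotonicity of $\nabla\psi_\varepsilon$ and $\nabla\varphi_\varepsilon$ the penalization terms contribute a favorable sign: $\langle \nabla\psi_\varepsilon(X^{\varepsilon,t,x_1}_s)-\nabla\psi_\varepsilon(X^{\varepsilon,t,x_2}_s),\bar X_s\rangle\ge 0$ and likewise for $\varphi_\varepsilon$, so when I apply It\^o's formula to $e^{-\lambda s}|\bar X_s|^2$ and to $e^{-\lambda s}|\bar Y_s|^2$ these terms drop out and what remains is structurally identical to the computation in Proposition \ref{Priori estimate for X and Y} (and in Lemma 5.1 of \cite{MC-01}), but with vanishing forcing except for $e^{-\lambda t}|x_1-x_2|^2$.

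The key steps, in order, are: (i) apply It\^o to $e^{-\lambda s}e^{-\lambda'(t-s)}|\bar X_s|^2$ on $[0,t]$, use $(H_5)(i),(ii)$ to bound the drift and diffusion increments, use the sign of the $\nabla\psi_\varepsilon$-difference term, and obtain an inequality of the form
\[
e^{-\lambda T}E|\bar X_T|^2+\bar\lambda_1\|\bar X\|^2_{M_\lambda[t,T]}\le K(C_1+K)\|\bar Y\|^2_{M_\lambda[t,T]}+(KC_2+k_1^2)\|\bar Z\|^2_{M_\lambda[t,T]}+e^{-\lambda t}|x_1-x_2|^2;
\]
(ii) apply It\^o to $e^{-\lambda s}e^{-\lambda'(s-t)}|\bar Y_s|^2$ on $[t,T]$, use $(H_5)(iv),(v)$, the sign of the $\nabla\varphi_\varepsilon$-difference term, and the terminal condition $|\bar Y_T|=|g(X^{\varepsilon,t,x_1}_T)-g(X^{\varepsilon,t,x_2}_T)|\le k_2|\bar X_T|$ to get the companion bounds on $\|\bar Y\|^2_{M_\lambda[t,T]}$ and $\|\bar Z\|^2_{M_\lambda[t,T]}$ in terms of $e^{-\lambda T}E|\bar X_T|^2$ and $\|\bar X\|^2_{M_\lambda[t,T]}$, exactly as in \eqref{Estimate 15}--\eqref{Estimate 16} (with $\delta=0$, i.e. no inhomogeneity on that side); (iii) combine the three inequalities and invoke the compatibility conditions $(C1)$ together with $(C2)$ or $(C3)$ — which are precisely what guarantee that the coupled linear system of inequalities closes — to absorb all the cross terms and conclude $\|\bar X\|^2_{M_\lambda[t,T]}+\|\bar Y\|^2_{M_\lambda[t,T]}+\|\bar Z\|^2_{M_\lambda[t,T]}\le C\,e^{-\lambda t}|x_1-x_2|^2$ with $C$ independent of $t$, $x_1$, $x_2$, $\varepsilon$; (iv) upgrade the $M_\lambda$-norms on $\bar X$ and $\bar Y$ to $S[t,T]$-norms via the Burkholder--Davis--Gundy inequality applied to the stochastic integrals in the forward and backward equations, which produces the $\sup_{t\le s\le T}$ estimates in \eqref{difference dominated by increasing function}; finally absorb the harmless factor $e^{-\lambda t}\le\max(1,e^{-\lambda T})$ into $C_T$.

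The main obstacle is making step (iii) genuinely uniform in $\varepsilon$ and in $t$. For the $\varepsilon$-uniformity one must be careful that the monotonicity of the Yosida approximations is used only to \emph{discard} terms (never to bound something by a quantity blowing up like $1/\varepsilon$), which is exactly why the difference of penalization terms is handled by its sign rather than by its Lipschitz constant — this is the one place where the present argument differs from a naive contraction estimate. For the $t$-uniformity one needs the constants $\bar\lambda_1,\bar\lambda_2,A(\cdot,T),B(\cdot,T),\mu(\alpha,T)$ from \eqref{notations} to enter only through their values at the fixed horizon $T$ (all the $[t,T]$-integrals are dominated by the corresponding $[0,T]$-quantities), so that the resulting constant $C_T$ depends on $T$ but not on the starting time $t$; this is the same bookkeeping as in the proof of Lemma \ref{Solvability of Penalized FBSDE}. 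Everything else is a routine repetition of the estimates in Proposition \ref{Priori estimate for X and Y} applied to the difference process.
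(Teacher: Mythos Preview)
Your proposal is correct and follows essentially the same route as the paper: apply the a priori estimates of Lemma~\ref{Difference of two solutions} (equivalently, the computation behind Proposition~\ref{Priori estimate for X and Y}) to the difference process, use the monotonicity of $\nabla\psi_\varepsilon$ and $\nabla\varphi_\varepsilon$ only to discard their contributions, close the resulting three inequalities via $(C1)$ with $(C2)$ or $(C3)$, and then upgrade to $S[t,T]$-norms by BDG. The paper makes the $t$-uniformity explicit via $e^{-\lambda t}\le e^{-(\lambda\wedge 0)T}$, $A(\bar\lambda_2,T-t)\le A(\bar\lambda_2,T)$, $B(\bar\lambda_2,T-t)\le B(\bar\lambda_2,T)$, which is exactly the bookkeeping you describe in your last paragraph.
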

\begin{proof}
We put $\hat{X}^{\varepsilon}=X^{\varepsilon,t,x_{1}}-X^{\varepsilon,t,x_{2}}$, $\hat{Y}^{\varepsilon}=Y^{\varepsilon,t,x_{1}}-Y^{\varepsilon,t,x_{2}}$
and $\hat{Z}^{\varepsilon}=Z^{\varepsilon,t,x_{1}}-Z^{\varepsilon,t,x_{2}}$. From (\ref{Estimate 1})-(\ref{Estimate 4}), using $e^{-\lambda t}\leq e^{-(\lambda\wedge0) t}\leq e^{-(\lambda\wedge0) T}$, $A(\lambda,T-t)\leq A(\lambda,T)$ and $B(\lambda,T-t)\leq B(\lambda,T)$, we have the following estimates:
\begin{equation}\label{Estimate 1 for increase}
\begin{array}
[c]{l}%
e^{-\lambda T}E|\hat{X}^{\varepsilon}_{T}|^{2}+\bar{\lambda}_{1}\|\hat{X}^{\varepsilon}\|^{2}_{M_{\lambda}[t,T]}\leq e^{-(\lambda\wedge0) T}|x_{1}-x_{2}|^{2}\medskip\\
\qquad+K(C_{1}+K)\|\hat{Y}^{\varepsilon}\|^{2}_{M_{\lambda}[t,T]}
+(KC_{2}+k_{1}^{2})\|\hat{Z}^{\varepsilon}\|^{2}_{M_{\lambda}[t,T]},
\end{array}
\end{equation}
\begin{equation}\label{Estimate 3 for increase}
\|\hat{Y}^{\varepsilon}\|^{2}_{M_{\lambda}[t,T]}\leq B(\bar{\lambda}_{2},T)\left[
k_{2}^{2}e^{-\lambda T}E|\hat{X}^{\varepsilon}_{T}|^{2}+KC_{3}\|\hat{X}^{\varepsilon}\|^{2}_{M_{\lambda}[t,T]}\right],
\end{equation}
and
\begin{equation}\label{Estimate 4 for increase}
\|\hat{Z}^{\varepsilon}\|^{2}_{M_{\lambda}[t,T]}\leq \frac{A(\bar{\lambda}_{2},T)}{\alpha}\left[
k_{2}^{2}e^{-\lambda T}E|\hat{X}^{\varepsilon}_{T}|^{2}+KC_{3}\|\hat{X}^{\varepsilon}\|^{2}_{M_{\lambda}[t,T]}\right].
\end{equation}
From these estimates, recalling the definition of $\mu(\alpha,T)$, we get
\begin{equation}\label{auxiliary inequality for increase}
(1-\mu(\alpha,T)k_{2}^{2})e^{-\lambda T}E|\hat{X}^{\varepsilon}_{T}|^{2}+(\bar{\lambda}_{1}-\mu(\alpha,T)KC_{3})\|\hat{X}^{\varepsilon}\|^{2}_{M_{\lambda}[t,T]}
\leq e^{-(\lambda\wedge0) T}|x_{1}-x_{2}|^{2},
\end{equation}
from where we obtain
\begin{equation}\label{estimate for X}
e^{-\lambda T}E|\hat{X}^{\varepsilon}_{T}|+\|\hat{X}^{\varepsilon}\|_{M_{\lambda}[t,T]}\leq C_{T}|x_{1}-x_{2}|^{2},
\end{equation}
with
\begin{equation}\label{CT}
\begin{array}
[c]{l}
C_{T}=\max\Bigg\{\frac{e^{-(\lambda\wedge0) T}}{1-\mu(\alpha,T)k_{2}^{2}},
\frac{e^{-(\lambda\wedge0) T}}{\bar{\lambda}_{1}-\mu(\alpha,T)KC_{3}}
\Bigg\}.
\end{array}
\end{equation}
Observe that $C_{T}$ does not depend on $(t,x_{1},x_{2})$ nor on $\varepsilon$. From (\ref{estimate for X}), (\ref{Estimate 3 for increase}) and (\ref{Estimate 4 for increase}), we have
\[
\|\hat{X}^{\varepsilon}\|_{M_{\lambda}[t,T]}+\|\hat{Y}^{\varepsilon}\|_{M_{\lambda}[t,T]}
+\|\hat{Z}^{\varepsilon}\|_{M_{\lambda}[t,T]}\leq C_{T}|x_{1}-x_{2}|^{2}.
\]
Here $C_{T}$ differs from (\ref{CT}), independent of $(t,x_{1},x_{2})$ and $\varepsilon$ and it may vary line by line in the following discussion.
Finally, from It\^{o}'s formula and the BDG inequality, we conclude that
\[
\|\hat{X}^{\varepsilon}\|_{S[t,T]}+\|\hat{Y}^{\varepsilon}\|_{S[t,T]}
+\|\hat{Z}^{\varepsilon}\|_{M[t,T]}\leq C_{T}|x_{1}-x_{2}|^{2}.
\]
The proof is completed now.
\hfill
\end{proof}
\begin{remark} \label{Lipschitz remark}
Similar to Proposition \ref{proposition of difference dominated by increasing function} we show that for all $\varepsilon>0$, $0\leq t\leq T$ and for all $\xi_{1},\xi_{2}\in L^{2}(\Omega,\mathcal{F}_{t},\mathbb{P};\mathbb{R}^{n})$, there is some constant $C_{T}$ independent of $(t,\xi_{1},\xi_{2})$ and $\varepsilon$, such that
\[
\begin{array}
[c]{l}
\quad E^{\mathcal{F}_{t}}\Bigg(\sup\limits_{t\leq r\leq T}|X_{r}^{\varepsilon,t,\xi_{1}}-X_{r}^{\varepsilon,t,\xi_{2}}|^{2}
+\sup\limits_{t\leq r\leq T}|Y_{r}^{\varepsilon,t,\xi_{1}}-Y_{r}^{\varepsilon,t,\xi_{2}}|^{2}
+{\displaystyle\int_{t}^{T}}|Z_{r}^{\varepsilon,t,\xi_{1}}-Z_{r}^{\varepsilon,t,\xi_{2}}|^{2}dr\Bigg)\medskip\\
\leq C_{T}|\xi_{1}-\xi_{2}|^{2},~\mathbb{P}-a.s.
\end{array}
\]
In particular,
$|Y_{t}^{\varepsilon,t,\xi_{1}}-Y_{t}^{\varepsilon,t,\xi_{2}}|\leq C_{T}|\xi_{1}-\xi_{2}|,~a.s.$
\end{remark}

Now we introduce the random field $\theta^{\varepsilon}(t,x):=Y^{\varepsilon,t,x}_{t}$, for $(t,x)\in [0,T]\times\mathbb{R}^{n}$. Then
\begin{equation}\label{Lipschitz for u}
|\theta^{\varepsilon}(t,x)-\theta^{\varepsilon}(t,x^{\prime})|\leq C_{T}|x-x^{\prime}|, ~a.s.
\end{equation}
Moreover, we have the following proposition:
\begin{proposition}\label{proposition for realtion of u and Y}
Let us suppose the assumptions $(H_{1})$-$(H_{5})$ as well as $(C1)$ combined either with $(C2)$ or with $(C3)$ for some $\lambda,\alpha,C_{1},C_{2},C_{3}$ and $C_{4}=\frac{1-\alpha}{K}$. Then, for any $t\in[0,T]$, and $\zeta\in L^{2}(\Omega,\mathcal{F}_{t},\mathbb{P};\mathbb{R}^{m})$, we have
\[\theta^{\varepsilon}(t,\zeta)=Y^{\varepsilon,t,\zeta}_{t}, ~\mathbb{P}-a.s.
\]
\end{proposition}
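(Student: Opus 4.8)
\textbf{Proof proposal for Proposition \ref{proposition for realtion of u and Y}.}

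The plan is to prove the identity first for simple $\mathcal{F}_{t}$-measurable random variables $\zeta$ of the form $\zeta=\sum_{i=1}^{N}x_{i}\mathbf{1}_{A_{i}}$, where $\{A_{i}\}_{i=1}^{N}$ is a finite $\mathcal{F}_{t}$-measurable partition of $\Omega$ and $x_{i}\in\mathbb{R}^{n}$, and then to pass to the limit using the Lipschitz estimate (\ref{Lipschitz for u}) together with Remark \ref{Lipschitz remark}. For the discrete case, the key observation is that the penalized FBSDE (\ref{Penalized FBSDE with two operators}) started at time $t$ from the deterministic point $x_{i}$ lives, after restriction to $A_{i}\in\mathcal{F}_{t}$, on a set that is independent of the Brownian increments after $t$ in the appropriate conditional sense; more precisely, on $A_{i}$ the processes $(X^{\varepsilon,t,\zeta},Y^{\varepsilon,t,\zeta},Z^{\varepsilon,t,\zeta})$ and $(X^{\varepsilon,t,x_{i}},Y^{\varepsilon,t,x_{i}},Z^{\varepsilon,t,x_{i}})$ solve the same FBSDE with the same initial condition. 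By the uniqueness part of Lemma \ref{Solvability of Penalized FBSDE} (applied pathwise on each $A_{i}$, or via a standard localization argument using that $\mathbf{1}_{A_{i}}$ is $\mathcal{F}_{t}$-measurable and that the equation is driven by increments of $B$ after $t$), we get $\mathbf{1}_{A_{i}}(X^{\varepsilon,t,\zeta},Y^{\varepsilon,t,\zeta},Z^{\varepsilon,t,\zeta})=\mathbf{1}_{A_{i}}(X^{\varepsilon,t,x_{i}},Y^{\varepsilon,t,x_{i}},Z^{\varepsilon,t,x_{i}})$ on the whole interval $[t,T]$. Evaluating the $Y$-components at time $t$ and summing over $i$ yields
\[
Y^{\varepsilon,t,\zeta}_{t}=\sum_{i=1}^{N}\mathbf{1}_{A_{i}}Y^{\varepsilon,t,x_{i}}_{t}=\sum_{i=1}^{N}\mathbf{1}_{A_{i}}\theta^{\varepsilon}(t,x_{i})=\theta^{\varepsilon}(t,\zeta),\quad\mathbb{P}\text{-a.s.},
\]
where the last equality uses that $\theta^{\varepsilon}(t,\cdot)$ is a deterministic (in fact Lipschitz) function of its spatial argument, so that $\theta^{\varepsilon}(t,\zeta)=\sum_{i}\mathbf{1}_{A_{i}}\theta^{\varepsilon}(t,x_{i})$.

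For a general $\zeta\in L^{2}(\Omega,\mathcal{F}_{t},\mathbb{P};\mathbb{R}^{m})$ (read $\mathbb{R}^{n}$), I would choose a sequence of simple $\mathcal{F}_{t}$-measurable random variables $\zeta_{k}\to\zeta$ in $L^{2}$. On one hand, Remark \ref{Lipschitz remark} gives
\[
E|Y^{\varepsilon,t,\zeta}_{t}-Y^{\varepsilon,t,\zeta_{k}}_{t}|^{2}\leq C_{T}E|\zeta-\zeta_{k}|^{2}\longrightarrow 0,
\]
so $Y^{\varepsilon,t,\zeta_{k}}_{t}\to Y^{\varepsilon,t,\zeta}_{t}$ in $L^{2}$. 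On the other hand, (\ref{Lipschitz for u}) gives $|\theta^{\varepsilon}(t,\zeta)-\theta^{\varepsilon}(t,\zeta_{k})|\leq C_{T}|\zeta-\zeta_{k}|$, hence $\theta^{\varepsilon}(t,\zeta_{k})\to\theta^{\varepsilon}(t,\zeta)$ in $L^{2}$ as well. Since $Y^{\varepsilon,t,\zeta_{k}}_{t}=\theta^{\varepsilon}(t,\zeta_{k})$ for every $k$ by the discrete case, the two $L^{2}$-limits coincide, which gives $\theta^{\varepsilon}(t,\zeta)=Y^{\varepsilon,t,\zeta}_{t}$, $\mathbb{P}$-a.s.

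The main obstacle is the rigorous justification of the discrete case, i.e. that on $A_{i}\in\mathcal{F}_{t}$ the solution started from $\zeta$ agrees with the solution started from $x_{i}$. This requires care because the FBSDE is fully coupled and the backward terminal condition $Y_{T}=g(X_{T})$ intertwines forward and backward components; one cannot simply invoke strong uniqueness fiberwise without checking measurability. The clean way is to note that both quintuples are adapted solutions of FBSVI (or of the penalized FBSDE) on $[t,T]$ with the $\mathcal{F}_{t}$-measurable initial datum $\mathbf{1}_{A_{i}}x_{i}+\mathbf{1}_{A_{i}^{c}}x_{i}$ — more precisely, one multiplies the whole system by $\mathbf{1}_{A_{i}}$ and uses that $\mathbf{1}_{A_{i}}$ commutes with the stochastic and Lebesgue integrals (being $\mathcal{F}_{t}$-measurable and hence constant with respect to the driving increments after $t$), together with the fact that $b,\sigma,f,g$ are genuine functions applied pointwise and that $\nabla\psi_{\varepsilon},\nabla\varphi_{\varepsilon}$ are deterministic; then the uniqueness statement of Lemma \ref{Solvability of Penalized FBSDE}, applied to the equation satisfied by the difference multiplied by $\mathbf{1}_{A_{i}}$, forces the two solutions to coincide on $A_{i}$. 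Once this localization step is in place, the rest is the routine density and continuity argument above.
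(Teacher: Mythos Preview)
Your proof is correct and follows exactly the approach the paper indicates: the paper does not give a detailed argument but simply refers to Peng \cite{P-97}, Theorem 4.7, combined with the uniqueness of the penalized FBSDE, and what you wrote is precisely that standard two-step argument (simple $\zeta$ via localization and uniqueness, then $L^{2}$-density using the Lipschitz estimates of Remark \ref{Lipschitz remark} and (\ref{Lipschitz for u})). One small correction: in the generality of Section~4 the coefficients $b,\sigma,f,g$ are allowed to be random (see $(H_{3})$), so $\theta^{\varepsilon}(t,\cdot)$ is in general an $\mathcal{F}_{t}$-measurable \emph{random field}, not a deterministic function; your argument goes through unchanged once you read ``deterministic Lipschitz function'' as ``a.s.\ Lipschitz random field with a jointly measurable version'', which is exactly what (\ref{Lipschitz for u}) provides.
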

The proof of the above Proposition can be obtained by combining the arguments of Peng (\cite{P-97}, Theorem 4.7)
with the uniqueness of the solution of our penalized FBSDE.\medskip

In the following discussion, we recall the assumption:
\begin{itemize}
\item[$\left(  H^{\prime}_{3}\right)  $] $k_{1}=0$, i.e., $\sigma$ does not depend on $z$.
\end{itemize}
Under $(H^{\prime}_{3})$, FBSVI (\ref{FBSDE with two operators}) becomes
\begin{equation}\label{FBSVI}
\left\{
\begin{array}
[c]{l}%
dX_{r}+\partial\psi(X_{r})dr\ni b(r,X_{r},Y_{r},Z_{r})dr+\sigma(r,X_{r},Y_{r})dB_{r},\medskip\\
-dY_{r}+\partial\varphi(Y_{r})dr\ni f(r,X_{r},Y_{r},Z_{r})dr-Z_{r}dB_{r},\quad r\in[0,T],\medskip\\
X_{0}=x, \quad Y_{T}=g(X_{T}),
\end{array}
\right.
\end{equation}
and the corresponding penalized FBSDE is
\begin{equation}\label{Penalized FBSVI with two operators}
\left\{
\begin{array}
[c]{l}%
X_{s}^{\varepsilon}+{\displaystyle\int_{0}^{s}}\nabla\psi_{\varepsilon}(X_{r}^{\varepsilon})dr=
x+{\displaystyle\int_{0}^{s}}b(r,X_{r}^{\varepsilon},Y_{r}^{\varepsilon},Z_{r}^{\varepsilon})dr
+{\displaystyle\int_{0}^{s}}\sigma(r,X_{r}^{\varepsilon},Y_{r}^{\varepsilon})dB_{r},\medskip\\
Y_{s}^{\varepsilon}+{\displaystyle\int_{s}^{T}}\nabla\varphi_{\varepsilon}(Y_{r}^{\varepsilon})dr=g(X_{T}^{\varepsilon})+{\displaystyle\int
_{s}^{T}}f(r,X_{r}^{\varepsilon},Y_{r}^{\varepsilon},Z_{r}^{\varepsilon})dr-{\displaystyle\int_{s}^{T}}Z_{r}^{\varepsilon}dB_{r}.
\end{array}
\right.
\end{equation}
Unlike \cite{MC-01}, we need the following uniform $L^{p}$-estimates of the solution of (\ref{Penalized FBSVI with two operators}) in our framework:
\begin{proposition}\label{proposition for high order estimates}
Let the assumptions $(H_{1})$-$(H_{5})$ and $(H^{\prime}_{4}),(H^{\prime}_{5})$ be satisfied. We also assume $(C1)$ and either $(C2)$ or $(C3)$ hold for some $\lambda,\alpha,C_{1},C_{2},C_{3}$ and $C_{4}=\frac{1-\alpha}{K}$. Then, for every $1\leq p\leq \frac{3+\rho_{0}}{2}$, there exists a constant $C$ independent of $\varepsilon$ and $x$, such that
\begin{equation}\label{high order estimates}
\begin{array}
[c]{l}%
\qquad E(\sup\limits_{0\leq r\leq T}|X_{r}^{\varepsilon}|^{2p}+\sup\limits_{0\leq r\leq T}|Y_{r}^{\varepsilon}|^{2p})+E\left\{\left({\displaystyle\int_{0}^{T}}|Z_{r}^{\varepsilon}|^{2}dr\right)^{p}\right\}\medskip\\
\leq CE\left\{|x|^{2p}+|g^{0}|^{2p}+\left({\displaystyle\int_{0}^{T}}|b^{0}(r)|^{2}dr\right)^{p}
+\left({\displaystyle\int_{0}^{T}}|f^{0}(r)|^{2}dr\right)^{p}+
\left({\displaystyle\int_{0}^{T}}|\sigma^{0}(r)|^{2}dr\right)^{p}\right\}.
\end{array}
\end{equation}
\end{proposition}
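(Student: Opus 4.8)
The plan is to derive the $L^{p}$-estimate for the penalized FBSDE (\ref{Penalized FBSVI with two operators}) in two stages: first establish it on a small time interval $[T-h,T]$ whose length $h$ depends only on the structural constants (and not on $\varepsilon$ or $x$), and then propagate it to the whole interval $[0,T]$ by backward induction over a finite partition $0=t_{0}<t_{1}<\cdots<t_{N}=T$ with $t_{k+1}-t_{k}\le h$, exactly in the spirit of Delarue's method \cite{D-02} as referenced in the statement. The technical tool on each small subinterval will be It\^o's formula applied to $|X^{\varepsilon}_{r}|^{2p}$ and $|Y^{\varepsilon}_{r}|^{2p}$ (equivalently to $(|X^{\varepsilon}_{r}|^{2}+\delta)^{p}$ and passing $\delta\to 0$), together with the Burkholder--Davis--Gundy inequality to control the stochastic integrals and the convexity/monotonicity properties (\ref{Property of Yosida approximation})(d) and (\ref{Property f}) of the Yosida approximations, which guarantee that the penalization terms $\langle\nabla\psi_{\varepsilon}(X^{\varepsilon}_{r}),X^{\varepsilon}_{r}\rangle$ and $\langle\nabla\varphi_{\varepsilon}(Y^{\varepsilon}_{r}),Y^{\varepsilon}_{r}\rangle$ have a favorable sign (up to the controllable constant $M_{0}$ from property $(f)$ and a term $\langle\nabla\psi_\varepsilon(X^\varepsilon_r),u_0\rangle$, both of which are absorbed). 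Crucially, under $(H'_{5})$, $\sigma$ does not depend on $z$, so the forward diffusion coefficient grows only like $|X^{\varepsilon}_{r}|+|Y^{\varepsilon}_{r}|+|\sigma^{0}(r)|$, and in the backward It\^o formula the $Z^{\varepsilon}$-term enters with the good sign, so that $(\int_{t_k}^{T}|Z^\varepsilon_r|^2dr)^p$ can be estimated from $\sup|Y^\varepsilon|^{2p}$ plus data terms.

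First I would fix a generic subinterval $[a,b]\subset[0,T]$ with $b-a\le h$ and, writing $(\hat X,\hat Y,\hat Z):=(X^\varepsilon,Y^\varepsilon,Z^\varepsilon)$, apply It\^o to $(|\hat X_{r}|^{2}+1)^{p}$ on $[a,r]$ and to $(|\hat Y_{r}|^{2}+1)^{p}$ on $[r,b]$. Using $(H_4)$, $(H_5)$, Young's inequality, the sign of the subdifferential terms, property $(f)$, and the BDG inequality, one obtains for a suitable (small, $h$-dependent) choice of the Young parameters
\[
\begin{array}{l}
E\!\left(\sup_{a\le r\le b}|\hat X_{r}|^{2p}\right)\le C\,E|\hat X_{a}|^{2p}+C\,h\,E\!\left(\sup_{a\le r\le b}|\hat Y_{r}|^{2p}\right)+C\,E\,\mathcal{D}_{a,b},\medskip\\
E\!\left(\sup_{a\le r\le b}|\hat Y_{r}|^{2p}\right)+E\!\left(\int_{a}^{b}|\hat Z_{r}|^{2}dr\right)^{p}\le C\,E|\hat Y_{b}|^{2p}+C\,h\,E\!\left(\sup_{a\le r\le b}|\hat X_{r}|^{2p}\right)+C\,E\,\mathcal{D}_{a,b},
\end{array}
\]
where $\mathcal{D}_{a,b}=(\int_{0}^{T}|b^{0}|^{2})^{p}+(\int_{0}^{T}|f^{0}|^{2})^{p}+(\int_{0}^{T}|\sigma^{0}|^{2})^{p}+1$ collects the data (the extra $+1$ coming from property $(f)$ and is harmless, or can be tracked more carefully to be absorbed into the data norms). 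Choosing $h$ small enough that $C^{2}h<1/2$, the two inequalities can be combined to close the estimate on $[a,b]$, giving
\[
E\!\left(\sup_{a\le r\le b}|\hat X_{r}|^{2p}+\sup_{a\le r\le b}|\hat Y_{r}|^{2p}\right)+E\!\left(\int_{a}^{b}|\hat Z_{r}|^{2}dr\right)^{p}\le C\Big(E|\hat X_{a}|^{2p}+E|\hat Y_{b}|^{2p}+E\,\mathcal{D}_{a,b}\Big).
\]
One must also control $E|\hat Y_{b}|^{2p}$: on the last interval $b=T$ and $\hat Y_{T}=g(X^\varepsilon_T)$, so $E|\hat Y_{T}|^{2p}\le C\,E|X^\varepsilon_{T}|^{2p}+C\,E|g^{0}|^{2p}$, which is fed back into the forward estimate on that same interval and absorbed provided $h$ is (further) small; on earlier intervals, $\hat Y_{b}=\hat Y_{t_{k+1}}=\theta^\varepsilon(t_{k+1},X^\varepsilon_{t_{k+1}})$, whose $L^{2p}$-norm is controlled by that of $X^\varepsilon_{t_{k+1}}$ using the Lipschitz bound (\ref{Lipschitz for u}) together with the $L^{2p}$-bound of $Y^{\varepsilon,t_{k+1},0}_{t_{k+1}}$ obtained on the already-treated interval $[t_{k+1},T]$.

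Then I would run the backward induction: assume (\ref{high order estimates}) holds with $[0,T]$ replaced by $[t_{k+1},T]$; apply the small-interval estimate on $[t_{k},t_{k+1}]$ with $a=t_{k}$, $b=t_{k+1}$; bound $E|\hat Y_{t_{k+1}}|^{2p}$ via (\ref{Lipschitz for u}) and the inductive hypothesis; bound $E|\hat X_{t_{k}}|^{2p}$ — note this is the forward value, which must itself be controlled, so one should in fact carry through the induction a single quantity like $E\sup_{t_k\le r\le T}(|X^\varepsilon_r|^{2p}+|Y^\varepsilon_r|^{2p})+E(\int_{t_k}^{T}|Z^\varepsilon_r|^2)^p$, starting from $t_N=T$ where $X^\varepsilon_0=x$ is deterministic and known, and propagating forward-in-space/backward-in-time so that $E|X^\varepsilon_{t_k}|^{2p}$ is obtained from the forward SDE on $[0,t_k]$ combined with the previous steps. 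Since $N=\lceil T/h\rceil$ is a fixed finite number depending only on the structural constants, after $N$ steps the accumulated constant $C^{N}$ remains independent of $\varepsilon$ and of $x$, which yields (\ref{high order estimates}).

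The main obstacle is the coupling: on each small interval the forward estimate involves $\sup|Y^\varepsilon|^{2p}$ and the backward estimate involves $\sup|X^\varepsilon|^{2p}$, so one cannot estimate them separately and must choose the interval length $h$ small (depending on the Lipschitz and linear-growth constants $K,L,\gamma,k_2$ and on $p$, but crucially \emph{not} on $\varepsilon$ — this is where $(H'_5)$, i.e. $k_1=0$, is essential, since otherwise the $Z^\varepsilon$-dependence of $\sigma$ would reintroduce a $\|Z^\varepsilon\|$-term on the forward side that is not small for fixed $h$) and then invert a $2\times 2$ "Gronwall-type" system. The second delicate point is propagating the terminal-type data across the glue points $t_k$ while keeping the constants $\varepsilon$- and $x$-free; this is handled precisely by the uniform Lipschitz bound (\ref{Lipschitz for u}) of Proposition~\ref{proposition of difference dominated by increasing function}, which is itself uniform in $\varepsilon$ and $t$. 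Everything else — the It\^o expansions, the BDG and Young applications, the sign handling of the Yosida terms — is routine once the interval length and the induction bookkeeping are set up.
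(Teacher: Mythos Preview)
Your overall strategy --- prove the $L^{2p}$-estimate on a short interval whose length depends only on structural constants, then propagate to $[0,T]$ by backward induction over a finite partition using the $\varepsilon$-uniform Lipschitz bound (\ref{Lipschitz for u}) on $\theta^{\varepsilon}(t,\cdot)$ at the glue points --- is exactly the paper's strategy. The induction mechanism (replacing $g$ by $\theta^{\varepsilon}(t_{k+1},\cdot)$ with Lipschitz constant $C_{T}$, and using Proposition~\ref{proposition for realtion of u and Y} to identify $Y^{\varepsilon,0,x}_{t_{k+1}}=\theta^{\varepsilon}(t_{k+1},X^{\varepsilon,0,x}_{t_{k+1}})$) is also the same.

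Where you differ is the small-interval step. The paper does not run It\^o directly on the coupled solution; instead it builds a Picard iteration (\ref{approximation sequence}) decoupling forward and backward at each step, shows it is Cauchy in $L^{2p}$ on $[t,T]$ with $T-t\le\delta_{K,k_2,\gamma,p}$ (so the limit, which is the unique solution, lies in $L^{2p}$), and then invokes the packaged difference estimate Proposition~\ref{Delarue} with $(\tilde X,\tilde Y,\tilde Z)=(0,0,0)$ to read off (\ref{estimates for small interval}). Your direct-It\^o route can also be made to work, but two points need correction. First, your displayed forward inequality is missing a $Z^{\varepsilon}$-term: $(H'_{5})$ removes the $z$-dependence from $\sigma$ only, while $b$ still depends on $z$ (see $(H_5)(i)$), so the drift contributes a term of order $h^{p}\,E(\int_{a}^{b}|Z^{\varepsilon}_{r}|^{2}dr)^{p}$ on the right; this is small and can be fed back from the backward estimate, but it must be present for the $2\times 2$ closure to be honest. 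Second, to justify the It\^o/BDG computations you need $E\sup|X^{\varepsilon}|^{2p}<\infty$ \emph{a priori}; for fixed $\varepsilon$ this follows from standard SDE/BSDE $L^{p}$-theory since $\nabla\psi_{\varepsilon},\nabla\varphi_{\varepsilon}$ are globally Lipschitz, but the paper's Picard construction sidesteps this by producing the $L^{2p}$-bound and the solution simultaneously. Finally, property $(f)$ is not needed here; property (\ref{Property of Yosida approximation})(d), i.e.\ $\langle\nabla\psi_{\varepsilon}(x),x\rangle\ge 0$ and $\langle\nabla\varphi_{\varepsilon}(y),y\rangle\ge 0$, already gives the penalization terms the correct sign in both It\^o expansions.
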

\begin{proof}
For the proof, we use an approach based on Theorem A.5 Delarue \cite{D-02}. But unlike \cite{D-02}, our coefficients $\psi_{\varepsilon}$ and $\varphi_{\varepsilon}$ depend on $\varepsilon$ so that we have to pay some special care. Let us give a sketch of the proof.

Given $\xi\in L^{2}(\Omega,\mathcal{F}_{t},\mathbb{P};\mathbb{R}^{n})$, we construct the following sequence $\left\{(X^{\varepsilon,k},Y^{\varepsilon,k},Z^{\varepsilon,k})\right\}_{k\ge1}$ of processes:
\begin{equation}\label{approximation sequence}
\left\{
\begin{array}
[c]{l}%
X_{s}^{\varepsilon,k+1}+{\displaystyle\int_{t}^{s}}\nabla\psi_{\varepsilon}(X_{r}^{\varepsilon,k+1})dr
=\xi\medskip\\
\qquad\qquad\qquad\qquad+{\displaystyle\int_{t}^{s}}b(r,X_{r}^{\varepsilon,k+1},Y_{r}^{\varepsilon,k},Z_{r}^{\varepsilon,k})dr
+{\displaystyle\int_{t}^{s}}\sigma(r,X_{r}^{\varepsilon,k+1},Y_{r}^{\varepsilon,k})dB_{r},\medskip\\
Y_{s}^{\varepsilon,k+1}+{\displaystyle\int_{s}^{T}}\nabla\varphi_{\varepsilon}(Y_{r}^{\varepsilon,k+1})dr=g(X_{T}^{\varepsilon,k+1})\medskip\\
\qquad\qquad\qquad\qquad+{\displaystyle\int
_{t}^{T}}f( r,X_{r}^{\varepsilon,k+1},Y_{r}^{\varepsilon,k+1},Z_{r}^{\varepsilon,k+1})dr-{\displaystyle\int_{t}^{T}}Z_{r}^{\varepsilon,k+1}dB_{r},
~s\in[t,T].
\end{array}
\right.
\end{equation}
If we choose $(X^{\varepsilon,0},Y^{\varepsilon,0},Z^{\varepsilon,0})$ s.t.
$E(\sup\limits_{t\leq r\leq T}|X_{r}^{\varepsilon,0}|^{2p}+\sup\limits_{t\leq r\leq T}|Y_{r}^{\varepsilon,0}|^{2p})+E\left({\displaystyle\int_{t}^{T}}|Z_{r}^{\varepsilon,0}|^{2}dr\right)^{p}<\infty$,
following the argument at page 264-265 \cite{D-02}, by using Proposition \ref{Delarue} (see the appendix), we obtain the existence of  a constant $\delta_{K,k_{2},\gamma,p}$ small enough, such that for all $T-t\leq\delta_{K,k_{2},\gamma,p}\wedge T_{0}$ ($T_{0}$ is the constant depending on $K,\gamma,k_{1},k_{2}$ chosen as in Theorem \ref{Solvability of Penalized FBSDE in appendix} of the appendix), we have
\[
\begin{array}
[c]{l}%
\qquad E\sup\limits_{t\leq r\leq T}|X_{r}^{\varepsilon,k}-X_{r}^{\varepsilon,l}|^{2p}+E\sup\limits_{t\leq r\leq T}|Y_{r}^{\varepsilon,k}-Y_{r}^{\varepsilon,l}|^{2p}
+E\left({\displaystyle\int_{t}^{T}}|Z_{r}^{\varepsilon,k}-Z_{r}^{\varepsilon,l}|^{2}dr\right)^{p}
\rightarrow 0,
\end{array}
\]
as $k,l\rightarrow\infty$. This means that $(X^{\varepsilon,k},Y^{\varepsilon,k},Z^{\varepsilon,k})$ converges to some $(X^{\varepsilon},Y^{\varepsilon},Z^{\varepsilon})$ in the sense that
\begin{equation}\label{hign order estimates in small interval}
\begin{array}
[c]{l}%
\qquad E\sup\limits_{t\leq r\leq T}|X_{r}^{\varepsilon,k}-X_{r}^{\varepsilon}|^{2p}+E\sup\limits_{t\leq r\leq T}|Y_{r}^{\varepsilon,k}-Y_{r}^{\varepsilon}|^{2p}
+E\left({\displaystyle\int_{t}^{T}}|Z_{r}^{\varepsilon,k}-Z_{r}^{\varepsilon}|^{2}dr\right)^{p}
\rightarrow 0.
\end{array}
\end{equation}
Then (\ref{approximation sequence}), (\ref{hign order estimates in small interval}) and Theorem \ref{Solvability of Penalized FBSDE in appendix} yield that $(X^{\varepsilon},Y^{\varepsilon},Z^{\varepsilon})$ is the unique solution of FBSDE:
\begin{equation}\label{FBSDE on small interval 1}
\left\{
\begin{array}
[c]{l}%
X_{s}^{\varepsilon}+{\displaystyle\int_{t}^{s}}\nabla\psi_{\varepsilon}(X_{r}^{\varepsilon})dr
=\xi+{\displaystyle\int_{t}^{s}}b(r,X_{r}^{\varepsilon},Y_{r}^{\varepsilon},Z_{r}^{\varepsilon})dr
+{\displaystyle\int_{t}^{s}}\sigma(r,X_{r}^{\varepsilon},Y_{r}^{\varepsilon})dB_{r},\medskip\\
Y_{s}^{\varepsilon}+{\displaystyle\int_{s}^{T}}\nabla\varphi_{\varepsilon}(Y_{r}^{\varepsilon})dr=g(X_{T}^{\varepsilon})+{\displaystyle\int
_{s}^{T}}f(r,X_{r}^{\varepsilon},Y_{r}^{\varepsilon},Z_{r}^{\varepsilon})dr-{\displaystyle\int_{s}^{T}}Z_{r}^{\varepsilon}dB_{r}, ~s\in[t,T].
\end{array}
\right.
\end{equation}
Moreover from (\ref{hign order estimates in small interval}), it follows that, for $T-t\leq\delta_{K,k_{2},\gamma,p}\wedge T_{0}$,
\[
E(\sup\limits_{t\leq r\leq T}|X_{r}^{\varepsilon}|^{2p}+\sup\limits_{t\leq r\leq T}|Y_{r}^{\varepsilon}|^{2p})+E\left({\displaystyle\int_{t}^{T}}|Z_{r}^{\varepsilon}|^{2}dr\right)^{p}<\infty.
\]
Now applying (\ref{difference in small interval}) for $(X_{s}^{\varepsilon},Y_{s}^{\varepsilon},Z_{s}^{\varepsilon})$ and $(0,0,0)$ (Obviously that $(0,0,0)$ is a solution of FBSDE (\ref{FBSDE on small interval 1}) with parameters $(\xi,b,\sigma,f,g)=(0,0,0,0,0)$), we see
there exists $0<\delta_{K,k_{2},\gamma,p}^{\prime}\leq\delta_{K,k_{2},\gamma,p}$ small enough and a constant $C_{K,k_{2},\gamma,p}$ such that, for $T-t\leq\delta_{K,k_{2},\gamma,p}^{\prime}\wedge T_{0}$,
\begin{equation}\label{estimates for small interval}
\begin{array}
[c]{l}%
\qquad E(\sup\limits_{t\leq r\leq T}|X_{r}^{\varepsilon}|^{2p}+\sup\limits_{t\leq r\leq T}|Y_{r}^{\varepsilon}|^{2p})+E\left({\displaystyle\int_{t}^{T}}|Z_{r}^{\varepsilon}|^{2}dr\right)^{p}\medskip\\
\leq C_{K,k_{2},\gamma,p}E\Bigg\{|\xi|^{2p}+|g^{0}|^{2p}
+\left({\displaystyle\int_{t}^{T}}|\sigma^{0}(r)|^{2}dr\right)^{p}
+\left({\displaystyle\int_{t}^{T}}(|b^{0}(r)|+|f^{0}(r)|)
dr\right)^{2p}\Bigg\}.
\end{array}
\end{equation}
If we take $t=0$, we know that the above inequality yields our result for $T$ small enough ($T\leq\delta_{K,k_{2},\gamma,p}^{\prime}\wedge T_{0}$). In what follows, we will show that this inequality can be extended to the whole interval.

We use the notation $\theta^{\varepsilon}(t,x)=Y^{\varepsilon,t,x}_{t}$, $(t,x)\in [0,T]\times\mathbb{R}^{n}$. From (\ref{Lipschitz for u}), it follows that for all $t\in[0,T]$,
\[
|\theta^{\varepsilon}(t,x)-\theta^{\varepsilon}(t,x^{\prime})|\leq C_{T}|x-x^{\prime}|, ~a.s.
\]
Now we divide the interval $[0,T]$ into subintervals defined by
$(t_{i})_{i=0,\ldots,N}$ where $t_{i}=\frac{iT}{N}$, $N\in\mathbb{N}$, such that $\frac{T}{N}\leq \delta^{\prime}_{K,C_{T},\gamma,p}\wedge T_{0}$, where $T_{0}$ is a constant as in Theorem \ref{Solvability of Penalized FBSDE in appendix}, but corresponding to $K,\gamma,k_{1},C_{T}$. From Theorem \ref{Solvability of Penalized FBSDE in appendix} we know that $Y_{t_{i+1}}^{\varepsilon,0,x}=Y_{t_{i+1}}^{\varepsilon,t_{i+1},X_{t_{i+1}}^{\varepsilon,0,x}}$.
Then Proposition \ref{proposition for realtion of u and Y} yields
\[
\theta^{\varepsilon}(t_{i+1},X_{t_{i+1}}^{\varepsilon,0,x})=Y_{t_{i+1}}^{\varepsilon,t_{i+1},X_{t_{i+1}}^{\varepsilon,0,x}}
=Y_{t_{i+1}}^{\varepsilon,0,x},~\mathbb{P}-a.s.
\]

The above discussion and (\ref{estimates for small interval}) allow to follow the argument developed at page 266 \cite{D-02} to obtain by induction that
\[
\begin{array}
[c]{l}%
\qquad E(\sup\limits_{0\leq r\leq T}|X_{r}^{\varepsilon}|^{2p}+\sup\limits_{0\leq r\leq T}|Y_{r}^{\varepsilon}|^{2p})
+E\left({\displaystyle\int_{0}^{T}}|Z_{r}^{\varepsilon}|^{2}dr\right)^{p}\medskip\\
\leq C_{K,C_{T},\gamma,p}E\Bigg\{|x|^{2p}+|g^{0}|^{2p}
+\left({\displaystyle\int_{0}^{T}}|\sigma^{0}(r)|^{2}dr\right)^{p}
+\left({\displaystyle\int_{0}^{T}}(|b^{0}(r)|+|f^{0}(r)|)
dr\right)^{2p}\Bigg\}.
\end{array}
\]
\hfill
\end{proof}
\subsection{Existence and uniqueness of solutions of FBSVIs}
In this subsection, we study the existence and the uniqueness of the solution of FBSVI (\ref{FBSDE with two operators}).
For this, we shall introduce the following condition:
\begin{itemize}
\item[$\left(  H_{6}\right)  $]
There exists a random variable $\zeta\in L^{1}(\Omega)$ and a constant $L$  such that
\[
|\varphi(g(\omega,x))|\leq \zeta(\omega)+L|x|^{3+\rho_{0}}, ~a.s.
\]
\end{itemize}
Based on the idea of \cite{PR-98} and \cite{PR-11}, we give the following auxiliary proposition:
\begin{proposition}\label{Boundedness proposition of two operators}
Let the assumptions $(H_{1})-(H_{6})$ and $(H^{\prime}_{4}),(H^{\prime}_{5})$ be satisfied. Assume also $(C1)$ and either $(C2)$ or $(C3)$ hold for some $\lambda,\alpha,C_{1},C_{2},C_{3}$ and $C_{4}=\frac{1-\alpha}{K}$. Then for all $0<\rho\leq \frac{1+\rho_{0}}{4}\wedge1$,
\[
\begin{array}
[c]{l}%
(i) \quad E\sup\limits_{0\leq r\leq T}|\nabla\psi_{\varepsilon}(X_{r}^{\varepsilon})|^{2+4\rho}\leq \frac{C}{\varepsilon^{2+3\rho}},\medskip\\
(ii) \quad E\sup\limits_{0\leq r\leq T}|X_{r}^{\varepsilon}-J_{\varepsilon,\psi}(X_{r}^{\varepsilon})|^{2+4\rho}\leq C\varepsilon^{\rho},\medskip\\
(iii)  \quad E{\displaystyle\int_{s}^{T}}|\nabla\varphi_{\varepsilon}(Y_{r}^{\varepsilon}))|^{2}dr\leq C, ~\text{ for all } s\in[0,T], \medskip\\
(iv)  \quad  E\varphi(J_{\varepsilon,\varphi}(Y_{s}^{\varepsilon}))+E{\displaystyle\int_{s}^{T}}\varphi(J_{\varepsilon,\varphi}(Y_{r}^{\varepsilon}))dr\leq C,~\text{ for all } s\in[0,T],\medskip\\
(v)  \quad  E|Y_{s}^{\varepsilon}-J_{\varepsilon,\varphi}(Y_{s}^{\varepsilon})|^{2}\leq \varepsilon C,~\text{ for all } s\in[0,T],.
\end{array}
\]
\end{proposition}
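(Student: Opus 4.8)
The plan is to derive all five estimates from It\^o's formula applied to suitable convex functionals of the penalized processes, combined with the uniform $L^p$-bounds already established in Proposition \ref{proposition for high order estimates}. For parts $(i)$--$(ii)$ I would work with the forward equation in (\ref{Penalized FBSVI with two operators}). Following the classical argument for forward stochastic variational inequalities (as in Asiminoaei and R\u{a}\c{s}canu \cite{AR-97}), one applies It\^o to $|X^{\varepsilon}_r - J_{\varepsilon,\psi}(X^{\varepsilon}_r)|^2 = \varepsilon^2|\nabla\psi_\varepsilon(X^\varepsilon_r)|^2$, using the monotonicity-type identity $\langle \nabla\psi_\varepsilon(x), x - J_{\varepsilon,\psi}(x)\rangle = \varepsilon|\nabla\psi_\varepsilon(x)|^2 \ge 0$ and the property (\ref{Property f}), namely $r_0|\nabla\psi_\varepsilon(x)| \le \langle \nabla\psi_\varepsilon(x), x-u_0\rangle + M_0$. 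Taking $u_0 = 0 \in Int(Dom\,\psi)$, one gets a differential inequality controlling $\int_s^T |\nabla\psi_\varepsilon(X^\varepsilon_r)|^2 dr$ and then, via a further It\^o expansion plus BDG, the supremum $E\sup_r |\nabla\psi_\varepsilon(X^\varepsilon_r)|^{2+4\rho}$. The powers of $\varepsilon$ in $(i)$ and $(ii)$ come from interpolating: one extra factor $|\nabla\psi_\varepsilon(X^\varepsilon_r)|$ costs a factor $1/\varepsilon$ each time it is estimated crudely, while $|X^\varepsilon_r - J_{\varepsilon,\psi}(X^\varepsilon_r)| = \varepsilon|\nabla\psi_\varepsilon(X^\varepsilon_r)|$ converts powers of the gradient into powers of $\varepsilon$. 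The restriction $\rho \le \frac{1+\rho_0}{4}\wedge 1$ is exactly what makes $2+4\rho \le \frac{3+\rho_0}{2}\cdot\frac{4}{?}$ fit the $L^p$-exponents available from Proposition \ref{proposition for high order estimates} after accounting for $(H'_4)$.

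For parts $(iii)$--$(v)$ I would turn to the backward equation. Apply It\^o's formula to $|Y^\varepsilon_r|^2$ on $[s,T]$ and use property (\ref{Property of Yosida approximation})-$(d)$, $0 \le \varphi_\varepsilon(Y^\varepsilon_r) \le \langle \nabla\varphi_\varepsilon(Y^\varepsilon_r), Y^\varepsilon_r\rangle$, together with the monotonicity $\langle f(r,x,y_1,z)-f(r,x,y_2,z), y_1-y_2\rangle \le \gamma|y_1-y_2|^2$ from $(H_5)(v)$; this yields the bound $(iii)$ on $E\int_s^T|\nabla\varphi_\varepsilon(Y^\varepsilon_r)|^2 dr$ once the terminal term $E|g(X^\varepsilon_T)|^2$ and the driver terms are absorbed using $(H_4)$, $(H_5)(iv)$ and the already-known $L^2$-bounds. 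For $(iv)$, the key is It\^o applied to $r \mapsto \varphi_\varepsilon(Y^\varepsilon_r)$, whose gradient is $\nabla\varphi_\varepsilon(Y^\varepsilon_r)$ and whose generator involves $\tfrac12 |\nabla\varphi_\varepsilon(Y^\varepsilon_r)/ \sqrt{\varepsilon}|^2$-type terms: using $\varphi_\varepsilon(u) = \tfrac{\varepsilon}{2}|\nabla\varphi_\varepsilon(u)|^2 + \varphi(J_{\varepsilon,\varphi}(u))$ from (\ref{Property of Yosida approximation})-$(a)$ and the terminal condition $\varphi_\varepsilon(g(X^\varepsilon_T)) \le \zeta + L|X^\varepsilon_T|^{3+\rho_0}$ from $(H_6)$, one bounds $E\varphi_\varepsilon(Y^\varepsilon_s)$ and the integral term; since $\varphi(J_{\varepsilon,\varphi}(Y^\varepsilon_r)) \le \varphi_\varepsilon(Y^\varepsilon_r)$, this gives $(iv)$. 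Finally $(v)$ is immediate from $|Y^\varepsilon_s - J_{\varepsilon,\varphi}(Y^\varepsilon_s)|^2 = \varepsilon^2|\nabla\varphi_\varepsilon(Y^\varepsilon_s)|^2 = 2\varepsilon\big(\varphi_\varepsilon(Y^\varepsilon_s) - \varphi(J_{\varepsilon,\varphi}(Y^\varepsilon_s))\big) \le 2\varepsilon\,\varphi_\varepsilon(Y^\varepsilon_s)$, so taking expectations and using the bound on $E\varphi_\varepsilon(Y^\varepsilon_s)$ from the proof of $(iv)$ finishes it.

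The main obstacle, I expect, is the bookkeeping of $\varepsilon$-powers in $(i)$ and $(ii)$: one must be careful that each crude estimate of a spare factor $|\nabla\psi_\varepsilon|$ by $\tfrac{1}{\varepsilon}|X^\varepsilon - J_{\varepsilon,\psi}(X^\varepsilon)|$ is balanced so the final exponent on $\varepsilon$ is exactly $-(2+3\rho)$ and $+\rho$ respectively, and not worse, while simultaneously keeping the resulting $L^p$-norm of $X^\varepsilon$ within the range $p \le \frac{3+\rho_0}{2}$ covered by Proposition \ref{proposition for high order estimates}. This requires choosing the H\"older exponents in the splitting of the cross-terms in the It\^o expansion in a way tied precisely to the constraint $\rho \le \frac{1+\rho_0}{4}\wedge 1$, together with the linear-growth bounds on $b$ and $\sigma$ from $(H'_4)$. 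The backward estimates $(iii)$--$(v)$, by contrast, are essentially the standard BSVI estimates of Pardoux and R\u{a}\c{s}canu \cite{PR-98}, now fed by the coupled forward bounds, and should go through without new difficulty once $(H_6)$ is used to handle the terminal data.
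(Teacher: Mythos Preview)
Your strategy for $(iv)$ and $(v)$ is correct and matches the paper. There are, however, two genuine gaps.

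\textbf{Part $(iii)$.} Applying It\^o's formula to $|Y^{\varepsilon}_r|^{2}$ produces the cross term $\langle Y^{\varepsilon}_r,\nabla\varphi_{\varepsilon}(Y^{\varepsilon}_r)\rangle$, which property~(\ref{Property of Yosida approximation})-(d) bounds below by $\varphi_{\varepsilon}(Y^{\varepsilon}_r)$. This controls $E\int_s^T\varphi_{\varepsilon}(Y^{\varepsilon}_r)\,dr$, \emph{not} $E\int_s^T|\nabla\varphi_{\varepsilon}(Y^{\varepsilon}_r)|^{2}dr$. To get the latter you must apply It\^o to $\varphi_{\varepsilon}(Y^{\varepsilon}_r)$ itself (as you correctly propose for $(iv)$): since the backward drift of $Y^{\varepsilon}$ contains $-\nabla\varphi_{\varepsilon}(Y^{\varepsilon}_r)$, pairing with $\nabla\varphi_{\varepsilon}(Y^{\varepsilon}_r)$ yields $|\nabla\varphi_{\varepsilon}(Y^{\varepsilon}_r)|^{2}$ on the left. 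The paper does exactly this (inequality~(\ref{Inequality 2})) and obtains $(iii)$ and $(iv)$ from the same computation; the terminal term $E\varphi_{\varepsilon}(g(X^{\varepsilon}_T))\le E\varphi(g(X^{\varepsilon}_T))$ is handled via $(H_6)$ and the $L^{3+\rho_0}$-bound of Proposition~\ref{proposition for high order estimates}.

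\textbf{Parts $(i)$--$(ii)$.} You propose It\^o on $|X^{\varepsilon}_r-J_{\varepsilon,\psi}(X^{\varepsilon}_r)|^{2}=\varepsilon^{2}|\nabla\psi_{\varepsilon}(X^{\varepsilon}_r)|^{2}$, but $\nabla\psi_{\varepsilon}$ is only Lipschitz, so $x\mapsto|\nabla\psi_{\varepsilon}(x)|^{2}$ is not even $C^{1}$ and neither the classical nor the extended It\^o formula applies. The paper avoids this by applying the extended It\^o formula of Remark~\ref{remark for extend ito formula} to $\psi_{\varepsilon}^{1+2\rho}(X^{\varepsilon})$, which \emph{is} admissible because $\psi_{\varepsilon}\in C^{1}(\mathbb{R}^n;\mathbb{R}_{+})$ with $1/\varepsilon$-Lipschitz gradient. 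This yields an inequality of the form
\[
E\sup_{r}\psi_{\varepsilon}^{1+2\rho}(X^{\varepsilon}_r)+E\int_0^T\psi_{\varepsilon}^{2\rho}(X^{\varepsilon}_r)|\nabla\psi_{\varepsilon}(X^{\varepsilon}_r)|^{2}dr
\le 2\psi_{\varepsilon}^{1+2\rho}(x)+\frac{C}{\varepsilon}E\int_0^T\psi_{\varepsilon}^{2\rho}(X^{\varepsilon}_r)A^{\varepsilon}_r\,dr,
\]
where $A^{\varepsilon}_r$ collects quadratic terms in $b^{0},\sigma^{0},X^{\varepsilon},Y^{\varepsilon},Z^{\varepsilon}$. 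The precise $\varepsilon$-exponents are then produced by a Young inequality with conjugate pair $(r_0,r_0')=\bigl(\frac{1+\rho}{\rho},1+\rho\bigr)$: one writes $\frac{1}{\varepsilon}\psi_{\varepsilon}^{2\rho}A^{\varepsilon}\le \frac{\rho}{1+\rho}\psi_{\varepsilon}^{2\rho}|\nabla\psi_{\varepsilon}|^{2}+\frac{C}{\varepsilon^{1+\rho}}|X^{\varepsilon}|^{2\rho}|A^{\varepsilon}|^{1+\rho}$, absorbs the first term into the left-hand side, and bounds the second via Proposition~\ref{proposition for high order estimates} (this is where $2+4\rho\le 3+\rho_0$ is used). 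This gives $E\sup_{r}\psi_{\varepsilon}^{1+2\rho}(X^{\varepsilon}_r)\le C\varepsilon^{-(1+\rho)}$, and then $(i)$ and $(ii)$ follow from $\frac{\varepsilon}{2}|\nabla\psi_{\varepsilon}|^{2}\le\psi_{\varepsilon}$ and $X^{\varepsilon}-J_{\varepsilon,\psi}(X^{\varepsilon})=\varepsilon\nabla\psi_{\varepsilon}(X^{\varepsilon})$. Note also that property~(\ref{Property f}) is not used here; it enters in the subsequent Proposition~\ref{Lipschitz proposition of two operators} to bound $E\bigl(\int_0^T|\nabla\psi_{\varepsilon}(X^{\varepsilon}_r)|\,dr\bigr)^{q}$, which is a different (integral, not sup) estimate.
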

\begin{proof}
We first prove $(iii)-(v)$.  Similar to Proposition 2.2 \cite{PR-98}, we can obtain
\begin{equation}\label{Inequality 2}
\begin{array}
[c]{l}%
e^{-\lambda s}\varphi_{\varepsilon}(Y^{\varepsilon}_{s})
+{\displaystyle\int_{s}^{T}}e^{-\lambda r}|\nabla\varphi_{\varepsilon
}(Y^{\varepsilon}_{r})|^{2}dr
\leq e^{-\lambda T}\varphi_{\varepsilon}(Y^{\varepsilon}_{T})-{\displaystyle\int_{s}^{T}}e^{-\lambda r}\langle\nabla\varphi_{\varepsilon}( Y^{\varepsilon}_{r}),Z^{\varepsilon}_{r}dB_{r}\rangle\medskip\\
\qquad\qquad\qquad +{\displaystyle\int_{s}^{T}}
e^{-\lambda r}\langle\nabla\varphi_{\varepsilon}(Y^{\varepsilon}_{r}),
|\lambda|Y^{\varepsilon}_{r}+f(r,X^{\varepsilon}_{r},Y^{\varepsilon}_{r},Z^{\varepsilon}_{r})\rangle dr,
\end{array}
\end{equation}
and then
\[
\begin{array}
[c]{l}%
\frac{1}{2}{\displaystyle\int_{s}^{T}}e^{-\lambda r}E|\nabla\varphi_{\varepsilon
}(Y^{\varepsilon}_{r})|^{2}dr
\leq e^{-\lambda T}E\varphi(g(X^{\varepsilon}_{T}))\medskip\\
\qquad\qquad\qquad +E{\displaystyle\int_{s}^{T}}
e^{-\lambda r}\big[4|\eta(t)|^{2}+4K^{2}(|X^{\varepsilon}_{r}|^{2}+|Z^{\varepsilon}_{r}|^{2})+(4L^{2}+|\lambda|^{2})|Y^{\varepsilon}_{r}|^{2}\big]dr.
\end{array}
\]
By using Proposition \ref{proposition for high order estimates} and $(H_{6})$, we have
${\displaystyle\int_{s}^{T}}e^{-\lambda r}E|\nabla\varphi_{\varepsilon}(Y_{r}^{\varepsilon}))|^{2}dr\leq C(1+|x|^{3+\rho_{0}}),$ which yields $(iii)$. Here $C$ is a constant independent of $\varepsilon$ and $x$.

From (\ref{Inequality 2}), $(iii)$ and $\varphi(J_{\varepsilon,\varphi}(y))\leq\varphi_{\varepsilon}(y)$, we get $Ee^{-\lambda t}\varphi(J_{\varepsilon,\varphi}(Y_{t}^{\varepsilon}))\leq C$, for any $t\in[0,T]$. Thus $(iv)$ follows easily.

Moreover, since $|y-J_{\varepsilon,\varphi}(y)|^{2}=|\nabla\varphi_{\varepsilon}(y)|^{2}\leq2\varepsilon\varphi_{\varepsilon}(y)$, $y\in\mathbb{R}^{m}$, $(v)$ is obtained from $(iv)$.

Now we are focusing on the proof of $(i)$ and $(ii)$. We shall follow the argument as in Theorem 4.20 \cite{PR-11}, so we only give a sketch of the proof here. Since $\psi_{\varepsilon}$ is a function of class $C^{1}(\mathbb{R}^{n};\mathbb{R}_{+})$
and the gradient $\nabla\psi_{\varepsilon}(u)$ is a Lipschitz function with Lipschitz constant $1/\varepsilon$, from Remark \ref{remark for extend ito formula} (see the appendix), we have
\[
\begin{array}
[c]{l}
\qquad \psi^{1+2\rho}_{\varepsilon}(X^{\varepsilon}_{s})
+(1+2\rho){\displaystyle\int_{0}^{s}}\psi^{2\rho}_{\varepsilon}(X^{\varepsilon}_{r})|\nabla\psi_{\varepsilon}(X^{\varepsilon}_{r})|^{2}dr\medskip\\
\leq \psi^{1+2\rho}_{\varepsilon}(x)
+(1+2\rho){\displaystyle\int_{0}^{s}}\psi^{2\rho}_{\varepsilon}(X^{\varepsilon}_{r})
\langle\nabla\psi_{\varepsilon}(X^{\varepsilon}_{r}),b(r,X^{\varepsilon}_{r},Y^{\varepsilon}_{r},Z^{\varepsilon}_{r})\rangle dr\medskip\\
\qquad +\frac{1+2\rho}{2\varepsilon}{\displaystyle\int_{0}^{s}}
\psi^{2\rho}_{\varepsilon}(X^{\varepsilon}_{r})|\sigma(r,X^{\varepsilon}_{r},Y^{\varepsilon}_{r})|^{2}dr\medskip\\
\qquad +\rho(1+2\rho){\displaystyle\int_{0}^{s}}
\psi^{2\rho-1}_{\varepsilon}(X^{\varepsilon}_{r})|\nabla\psi_{\varepsilon}(X^{\varepsilon}_{r})|^{2}
|\sigma(r,X^{\varepsilon}_{r},Y^{\varepsilon}_{r})|^{2}dr\medskip\\
\qquad +(1+2\rho){\displaystyle\int_{0}^{s}}\psi^{2\rho}_{\varepsilon}(X^{\varepsilon}_{r})
\langle\nabla\psi_{\varepsilon}(X^{\varepsilon}_{r}),\sigma(r,X^{\varepsilon}_{r},Y^{\varepsilon}_{r})dB_{r}\rangle. \medskip\\
\end{array}
\]
Then, similarly, following the argument as in Theorem 4.20 \cite{PR-11}, we can prove that
there exists a constant $C$ independent of $\varepsilon$ and $x$, such that
\begin{equation}\label{equation 12}
\begin{array}
[c]{l}
E\sup\limits_{0\leq r\leq T}\psi^{1+2\rho}_{\varepsilon}(X^{\varepsilon}_{r})
+E{\displaystyle\int_{0}^{T}}\psi^{2\rho}_{\varepsilon}(X^{\varepsilon}_{r})|\nabla\psi_{\varepsilon}(X^{\varepsilon}_{r})|^{2}dr
\leq 2\psi^{1+2\rho}_{\varepsilon}(x)+\frac{C}{\varepsilon}E{\displaystyle\int_{0}^{T}}\psi^{2\rho}_{\varepsilon}(X^{\varepsilon}_{r})A^{\varepsilon}_{r}dr.
\end{array}
\end{equation}
Here
\[A^{\varepsilon}_{r}=|b^{0}(r)|^{2}+|\sigma^{0}(r)|^{2}+|X^{\varepsilon}_{r}|^{2}+|Y^{\varepsilon}_{r}|^{2}+|X^{\varepsilon}_{r}||Z^{\varepsilon}_{r}|
~r\in[0,T].
\]
Moreover, let $r_{0}=\frac{1+\rho}{\rho}$. Then from Young's inequality,
\[
\begin{array}
[c]{l}
\frac{C}{\varepsilon}\psi^{2\rho}_{\varepsilon}(X^{\varepsilon}_{r})A^{\varepsilon}_{r}
=\psi^{2\rho-\frac{2}{r_{0}}}_{\varepsilon}(X^{\varepsilon}_{r})
\psi^{\frac{2}{r_{0}}}_{\varepsilon}(X^{\varepsilon}_{s})A^{\varepsilon}_{r}\frac{C}{\varepsilon}
\leq \psi^{2\rho-\frac{2}{r_{0}}}_{\varepsilon}(X^{\varepsilon}_{r})
|\nabla\psi_{\varepsilon}(X^{\varepsilon}_{r})|^{\frac{2}{r_{0}}}|X^{\varepsilon}_{r}|^{\frac{2}{r_{0}}}A^{\varepsilon}_{r}\frac{C}{\varepsilon}\medskip\\
\leq \frac{1}{r_{0}}\left(\psi^{2\rho-\frac{2}{r_{0}}}_{\varepsilon}(X^{\varepsilon}_{r})
|\nabla\psi_{\varepsilon}(X^{\varepsilon}_{r})|^{\frac{2}{r_{0}}}\right)^{r_{0}}+
\frac{r_{0}-1}{r_{0}}\left(|X^{\varepsilon}_{r}|^{\frac{2}{r_{0}}}A^{\varepsilon}_{r}\frac{C}{\varepsilon}\right)^{\frac{r_{0}}{r_{0}-1}}\medskip\\
=\frac{\rho}{1+\rho}\psi^{2\rho}_{\varepsilon}(X^{\varepsilon}_{r})
|\nabla\psi_{\varepsilon}(X^{\varepsilon}_{r})|^{2}
+\frac{1}{\varepsilon^{1+\rho}}\frac{C^{1+\rho}}{1+\rho}|X^{\varepsilon}_{r}|^{2\rho}|A^{\varepsilon}_{r}|^{1+\rho}.
\end{array}
\]
From the above estimate, (\ref{high order estimates}), (\ref{equation 12}) and Young's inequality it follows that
\begin{equation}\label{equation 13}
\begin{array}
[c]{l}
E\sup\limits_{0\leq r\leq T}\psi^{1+2\rho}_{\varepsilon}(X^{\varepsilon}_{r})
\leq 2\psi^{1+2\rho}_{\varepsilon}(x)
+\frac{C}{\varepsilon^{1+\rho}}E{\displaystyle\int_{0}^{T}}|X^{\varepsilon}_{r}|^{2\rho}|A^{\varepsilon}_{r}|^{1+\rho}dr
\leq 2\psi^{1+2\rho}_{\varepsilon}(x)\medskip\\
\qquad+\frac{C}{\varepsilon^{1+\rho}}E{\displaystyle\int_{0}^{T}}
\Big(|b^{0}(r)|^{2+4\rho}+|\sigma^{0}(r)|^{2+4\rho}
+|X^{\varepsilon}_{r}|^{2+4\rho}+|Y^{\varepsilon}_{r}|^{2+4\rho}
+|X^{\varepsilon}_{r}|^{1+3\rho}|Z^{\varepsilon}_{r}|^{1+\rho}\Big)dr\medskip\\
\leq 2\psi^{1+2\rho}_{\varepsilon}(x)+\frac{C}{\varepsilon^{1+\rho}}\Bigg[E\sup\limits_{0\leq r\leq T}|X_{r}^{\varepsilon}|^{2+4\rho}+E\sup\limits_{0\leq r\leq T}|Y_{r}^{\varepsilon}|^{2+4\rho}\medskip\\
\qquad\qquad+E{\displaystyle\int_{0}^{T}}
\Big(|b^{0}(r)|^{2+4\rho}+|\sigma^{0}(r)|^{2+4\rho}\Big)dr+E\left\{\sup\limits_{0\leq r\leq T}|X^{\varepsilon}_{r}|^{1+3\rho}\left({\displaystyle\int_{0}^{T}}
|Z^{\varepsilon}_{r}|^{2}dr\right)^{\frac{1+\rho}{2}}\right\}\Bigg]\medskip\\
\leq 2\psi^{1+2\rho}(x)+\frac{C}{\varepsilon^{1+\rho}}\Bigg[E\sup\limits_{0\leq r\leq T}|X_{r}^{\varepsilon}|^{2+4\rho}+E\sup\limits_{0\leq r\leq T}|Y_{r}^{\varepsilon}|^{2+4\rho}\medskip\\
\qquad\qquad\qquad\qquad+E{\displaystyle\int_{0}^{T}}
\Big(|b^{0}(r)|^{2+4\rho}+|\sigma^{0}(r)|^{2+4\rho}\Big)dr+E\left\{\left({\displaystyle\int_{0}^{T}}
|Z^{\varepsilon}_{r}|^{2}dr\right)^{1+2\rho}\right\}\Bigg]\medskip\\
\leq \frac{C}{\varepsilon^{1+\rho}}\left(1+|x|^{3+\rho_{0}}+\psi^{1+2\rho}(x)\right),
\end{array}
\end{equation}
where C is a constant independent of $\varepsilon$ and $x$ and it can vary from line to line. (We can assume that $\varepsilon<1$. Also recall that $0\leq \rho\leq \frac{1+\rho_{0}}{4}\wedge1$ so that $2+4\rho\leq 3+\rho_{0}$).

Finally, since $\frac{\varepsilon}{2}|\nabla\psi_{\varepsilon}(X^{\varepsilon}_{r})|^{2}\leq \psi_{\varepsilon}(X^{\varepsilon}_{r})$ and
$X^{\varepsilon}_{r}-J_{\varepsilon,\psi}(X^{\varepsilon}_{r})=\varepsilon\nabla\psi_{\varepsilon}(X^{\varepsilon}_{r})$,
we obtain $(i)$ and $(ii)$ directly from (\ref{equation 13}). \hfill
\end{proof}
\begin{proposition}\label{Lipschitz proposition of two operators}
Under the assumptions of Proposition \ref{Boundedness proposition of two operators}, setting $\frac{1-\rho_{0}}{4+4\rho_{0}}\vee0<\rho\leq\frac{1+\rho_{0}}{4}\wedge1$, we have
\[
\begin{array}
[c]{l}
(i) \quad E{\displaystyle\int_{0}^{T}}\Big\{|X_{r}^{\varepsilon_{1}}-X_{r}^{\varepsilon_{2}}|^{2}+|Y_{r}^{\varepsilon_{1}}-Y_{r}^{\varepsilon_{2}}|^{2}
+|Z_{r}^{\varepsilon_{1}}-Z_{r}^{\varepsilon_{2}}|^{2}\Big\}dr\leq C\left(\varepsilon_{1}^{\frac{\rho}{2+4\rho}}
+\varepsilon_{2}^{\frac{\rho}{2+4\rho}}\right),\medskip\\
(ii)  \quad  E\Big\{\sup\limits_{0\leq r\leq T}|X_{r}^{\varepsilon_{1}}-X_{r}^{\varepsilon_{2}}|^{2}+\sup\limits_{0\leq r\leq T}|Y_{r}^{\varepsilon_{1}}-Y_{r}^{\varepsilon_{2}}|^{2}\Big\}\leq  C\left(\varepsilon_{1}^{\frac{\rho}{2+4\rho}}
+\varepsilon_{2}^{\frac{\rho}{2+4\rho}}\right).
\end{array}
\]
Here $C$ is a constant which depends neither on $\varepsilon_{1}$ nor on $\varepsilon_{2}$.
\end{proposition}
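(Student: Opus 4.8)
The plan is to start from the estimate of Proposition \ref{Lipschitz sequence 1} and to absorb the prefactor $(\varepsilon_1+\varepsilon_2)$ into the two cross-integrals on its right-hand side by means of the elementary identities $\varepsilon_i\nabla\psi_{\varepsilon_i}(X_r^{\varepsilon_i})=X_r^{\varepsilon_i}-J_{\varepsilon_i,\psi}(X_r^{\varepsilon_i})$ and $\varepsilon_i\nabla\varphi_{\varepsilon_i}(Y_r^{\varepsilon_i})=Y_r^{\varepsilon_i}-J_{\varepsilon_i,\varphi}(Y_r^{\varepsilon_i})$. Writing
\[
(\varepsilon_1+\varepsilon_2)|\nabla\psi_{\varepsilon_1}(X_r^{\varepsilon_1})||\nabla\psi_{\varepsilon_2}(X_r^{\varepsilon_2})|
=|X_r^{\varepsilon_1}-J_{\varepsilon_1,\psi}(X_r^{\varepsilon_1})||\nabla\psi_{\varepsilon_2}(X_r^{\varepsilon_2})|
+|\nabla\psi_{\varepsilon_1}(X_r^{\varepsilon_1})||X_r^{\varepsilon_2}-J_{\varepsilon_2,\psi}(X_r^{\varepsilon_2})|,
\]
and similarly with $\varphi,Y$ in place of $\psi,X$, Proposition \ref{Lipschitz sequence 1} reduces the claim to bounding each of these four terms (integrated in $dr$ and in expectation) by $C\varepsilon_1^{\rho/(2+4\rho)}$ or $C\varepsilon_2^{\rho/(2+4\rho)}$; one may of course assume $\varepsilon_1,\varepsilon_2\le1$.

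For the terms coming from $\psi$, I would bound $|X_r^{\varepsilon_1}-J_{\varepsilon_1,\psi}(X_r^{\varepsilon_1})|$ by $\Delta^{\varepsilon_1}:=\sup_{0\le s\le T}|X_s^{\varepsilon_1}-J_{\varepsilon_1,\psi}(X_s^{\varepsilon_1})|$ and note that $\int_0^T|\nabla\psi_{\varepsilon_2}(X_r^{\varepsilon_2})|\,dr=\updownarrow V^{\varepsilon_2}\updownarrow_{[0,T]}$, the total variation of the finite-variation part $V_t^{\varepsilon_2}=\int_0^t\nabla\psi_{\varepsilon_2}(X_r^{\varepsilon_2})\,dr$ of the penalized forward equation. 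Hölder's inequality with exponents $2+4\rho$ and $\tfrac{2+4\rho}{1+4\rho}$ then gives
\[
E\int_0^T|X_r^{\varepsilon_1}-J_{\varepsilon_1,\psi}(X_r^{\varepsilon_1})||\nabla\psi_{\varepsilon_2}(X_r^{\varepsilon_2})|\,dr
\le\big(E(\Delta^{\varepsilon_1})^{2+4\rho}\big)^{\frac{1}{2+4\rho}}\big(E\updownarrow V^{\varepsilon_2}\updownarrow_{[0,T]}^{\frac{2+4\rho}{1+4\rho}}\big)^{\frac{1+4\rho}{2+4\rho}}.
\]
By Proposition \ref{Boundedness proposition of two operators}(ii) the first factor is $\le C\varepsilon_1^{\rho/(2+4\rho)}$, and the second factor is bounded by a constant independent of $\varepsilon_2$ by the a priori estimates for the penalized forward stochastic variational inequality (see the appendix, in the spirit of \cite{AR-97}), as soon as $\tfrac{2+4\rho}{1+4\rho}\le\tfrac{3+\rho_0}{2}$; this last inequality is exactly equivalent to $\rho\ge\tfrac{1-\rho_0}{4+4\rho_0}$, which is why the lower restriction on $\rho$ is imposed. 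The symmetric term is bounded by $C\varepsilon_2^{\rho/(2+4\rho)}$ in the same way.

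The two terms coming from $\varphi$ are easier: by the Cauchy--Schwarz inequality on $\Omega\times[0,T]$,
\[
E\int_0^T|Y_r^{\varepsilon_1}-J_{\varepsilon_1,\varphi}(Y_r^{\varepsilon_1})||\nabla\varphi_{\varepsilon_2}(Y_r^{\varepsilon_2})|\,dr
\le\Big(E\int_0^T|Y_r^{\varepsilon_1}-J_{\varepsilon_1,\varphi}(Y_r^{\varepsilon_1})|^2dr\Big)^{1/2}\Big(E\int_0^T|\nabla\varphi_{\varepsilon_2}(Y_r^{\varepsilon_2})|^2dr\Big)^{1/2},
\]
where the first factor is $\le C\varepsilon_1^{1/2}$ by Proposition \ref{Boundedness proposition of two operators}(v) and the second is $\le C$ by Proposition \ref{Boundedness proposition of two operators}(iii); likewise the symmetric term is $\le C\varepsilon_2^{1/2}$. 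Since $\rho/(2+4\rho)<1/2$ and $\varepsilon_i\le1$, we have $\varepsilon_i^{1/2}\le\varepsilon_i^{\rho/(2+4\rho)}$. Feeding the four bounds into Proposition \ref{Lipschitz sequence 1} yields $E\{\sup_{[0,T]}|X^{\varepsilon_1}-X^{\varepsilon_2}|^2+\sup_{[0,T]}|Y^{\varepsilon_1}-Y^{\varepsilon_2}|^2+\int_0^T|Z_r^{\varepsilon_1}-Z_r^{\varepsilon_2}|^2dr\}\le C(\varepsilon_1^{\rho/(2+4\rho)}+\varepsilon_2^{\rho/(2+4\rho)})$, which is assertion (ii) together with the $Z$-part of (i); the $X$- and $Y$-parts of (i) then follow from $\int_0^T|\cdot|^2dr\le T\sup_{[0,T]}|\cdot|^2$.

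The one genuine obstacle is the uniform-in-$\varepsilon$ control of the moment $E\updownarrow V^{\varepsilon}\updownarrow_{[0,T]}^{(2+4\rho)/(1+4\rho)}$ of the total variation of the reflecting term of the penalized forward SVI; this is precisely where hypothesis $(H'_4)$ (the $M^{3+\rho_0}$ integrability of the data) and the resulting $L^{p}$-bounds of Proposition \ref{proposition for high order estimates} enter, and it forces the constraint $\rho>\tfrac{1-\rho_0}{4+4\rho_0}\vee0$. Everything else is a routine consequence of Propositions \ref{Lipschitz sequence 1} and \ref{Boundedness proposition of two operators}; the real content of the argument is the bookkeeping trick that replaces the small factors $\varepsilon_i\nabla\psi_{\varepsilon_i}$, $\varepsilon_i\nabla\varphi_{\varepsilon_i}$ by the quantities $X^{\varepsilon_i}-J_{\varepsilon_i,\psi}(X^{\varepsilon_i})$ and $Y^{\varepsilon_i}-J_{\varepsilon_i,\varphi}(Y^{\varepsilon_i})$, whose rates of decay are quantified precisely by Proposition \ref{Boundedness proposition of two operators}(ii) and (v).
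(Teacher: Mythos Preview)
Your argument is correct and follows essentially the paper's own proof: both invoke Proposition~\ref{Lipschitz sequence 1}, apply H\"older with exponents $2+4\rho$ and $\tfrac{2+4\rho}{1+4\rho}$ to the $\psi$-cross-term (your use of Proposition~\ref{Boundedness proposition of two operators}(ii) via the identity $\varepsilon\nabla\psi_\varepsilon=X-J_{\varepsilon,\psi}X$ is exactly equivalent to the paper's direct use of~(i)), and dispose of the $\varphi$-cross-term by an elementary inequality together with Proposition~\ref{Boundedness proposition of two operators}(iii). The only minor inaccuracy is locational: the crucial uniform bound $E\big(\int_0^T|\nabla\psi_\varepsilon(X_r^\varepsilon)|\,dr\big)^q\le C$ you flag as ``the one genuine obstacle'' is not in the appendix but is proved inline within the paper's proof of this very proposition (equations~\eqref{to proof Bounded variation 1}--\eqref{equation 16}), via property~\eqref{Property f} of the Yosida approximation and It\^o's formula applied to $|X^\varepsilon_r-u_0|^2$.
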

\begin{proof}
From Proposition \ref{Lipschitz sequence 1} we have (\ref{Lipschitz sequence equation 1}). In the same manner as in \cite{PR-11},
applying the H\"{o}lder inequality to the right-hand side of (\ref{Lipschitz sequence equation 1}), it follows that
\begin{equation}\label{equation 15}
\begin{array}
[c]{l}
(\varepsilon_{1}+\varepsilon_{2})E{\displaystyle\int_{0}^{T}}
|\nabla\psi_{\varepsilon_{1}}(X^{\varepsilon_{1}}_{r})||\nabla\psi_{\varepsilon_{2}}(X^{\varepsilon_{2}}_{r})|dr\medskip\\
\leq \varepsilon_{1}\left(E\sup\limits_{0\leq r\leq T}|\nabla\psi_{\varepsilon_{1}}(X^{\varepsilon_{1}}_{r})|^{2+4\rho}\right)^{\frac{1}{2+4\rho}}
\left[E\left({\displaystyle\int_{0}^{T}}|\nabla\psi_{\varepsilon_{2}}(X^{\varepsilon_{2}}_{r})|dr\right)
^{\frac{2+4\rho}{1+4\rho}}\right]^{\frac{1+4\rho}{2+4\rho}}\medskip\\
\qquad+\varepsilon_{2}\left(E\sup\limits_{0\leq r\leq T}|\nabla\psi_{\varepsilon_{2}}(X^{\varepsilon_{2}}_{r})|^{2+4\rho}\right)^{\frac{1}{2+4\rho}}
\left[E\left({\displaystyle\int_{0}^{T}}|\nabla\psi_{\varepsilon_{1}}(X^{\varepsilon_{1}}_{r})|dr\right)
^{\frac{2+4\rho}{1+4\rho}}\right]^{\frac{1+4\rho}{2+4\rho}}.
\end{array}
\end{equation}
Now we calculate $E\left\{\left({\displaystyle\int_{0}^{T}}
|\nabla\psi_{\varepsilon}(X^{\varepsilon}_{r})|dr\right)^{\frac{2+4\rho}{1+4\rho}}\right\}$.
From (\ref{Property f}) we know
\begin{equation}\label{to proof Bounded variation 1}
\begin{array}
[c]{l}
2r_{0}{\displaystyle\int_{0}^{T}}|\nabla\psi_{\varepsilon}(X^{\varepsilon}_{s})|ds\leq 2{\displaystyle\int_{0}^{T}}\langle\nabla\psi_{\varepsilon}(X^{\varepsilon}_{s}),X^{\varepsilon}_{s}-u_{0}\rangle ds+2M_{0}T.
\end{array}
\end{equation}
By applying It\^{o}'s formula to $|X^{\varepsilon}_{r}-u_{0}|^{2}$, we obtain
\begin{equation}\label{to proof Bounded variation 2}
\begin{array}
[c]{l}
|X^{\varepsilon}_{s}-u_{0}|^{2}+2{\displaystyle\int_{0}^{s}}\langle\nabla\psi_{\varepsilon}(X^{\varepsilon}_{r}),X^{\varepsilon}_{r}-u_{0}\rangle dr
=|x-u_{0}|^{2}+{\displaystyle\int_{0}^{s}}|\sigma(r,X^{\varepsilon}_{r},Y^{\varepsilon}_{r})|^{2} dr\medskip\\
\qquad+{\displaystyle\int_{0}^{s}}2\langle X^{\varepsilon}_{r}-u_{0},b(r,X^{\varepsilon}_{r},Y^{\varepsilon}_{r},Z^{\varepsilon}_{r})\rangle dr
+{\displaystyle\int_{0}^{s}}2\langle X^{\varepsilon}_{r}-u_{0},\sigma(r,X^{\varepsilon}_{r},Y^{\varepsilon}_{r})dB_{r}\rangle.
\end{array}
\end{equation}
Thus
\[
\begin{array}
[c]{l}
2r_{0}{\displaystyle\int_{0}^{T}}|\nabla\psi_{\varepsilon}(X^{\varepsilon}_{r})|dr\leq 2|x|^{2}+2|u_{0}|^{2}+2M_{0}T
+{\displaystyle\int_{0}^{T}}|\sigma(r,X^{\varepsilon}_{r},Y^{\varepsilon}_{r})|^{2} dr\medskip\\
\qquad+{\displaystyle\int_{0}^{T}}2\langle X^{\varepsilon}_{r}-u_{0},b(r,X^{\varepsilon}_{r},Y^{\varepsilon}_{r},Z^{\varepsilon}_{r})\rangle dr
+{\displaystyle\int_{0}^{T}}2\langle X^{\varepsilon}_{r}-u_{0},\sigma(r,X^{\varepsilon}_{r},Y^{\varepsilon}_{r})dB_{r}\rangle.
\end{array}
\]
Consequently, for fixed $u_{0}\in\mathbb{R}^{n}$, we deduce from Proposition \ref{proposition for high order estimates}, that for all $1\leq q\leq \frac{3+\rho_{0}}{2}$,
\begin{equation}\label{equation 16}
\begin{array}
[c]{l}
E\left\{\left(2r_{0}{\displaystyle\int_{0}^{T}}|\nabla\psi_{\varepsilon}(X^{\varepsilon}_{r})|dr\right)^{q}\right\}
\leq C\Bigg[1+E\sup\limits_{0\leq r\leq T}|X^{\varepsilon}_{r}|^{2q}+E\sup\limits_{0\leq r\leq T}|Y^{\varepsilon}_{r}|^{2q}\medskip\\
\quad+E\left\{\left({\displaystyle\int_{0}^{T}}|Z_{r}^{\varepsilon}|^{2}dr\right)^{q}\right\}
+E\left({\displaystyle\int_{0}^{T}}
|b^{0}(r)|^{2}dr\right)^{q}+E\left({\displaystyle\int_{0}^{T}}
|\sigma^{0}(r)|^{2}dr\right)^{q}\Bigg]\leq C.
\end{array}
\end{equation}
We choose now $q=\frac{2+4\rho}{1+4\rho}$ and we observe that $q\leq\frac{3+\rho_{0}}{2}$. (Indeed, recall that $\rho\ge\frac{1-\rho_{0}}{4+4\rho_{0}}\vee 0$ ). Then we obtain from (\ref{equation 15}),
(\ref{equation 16}) and Proposition \ref{Boundedness proposition of two operators} $(i)$, that
\begin{equation}\label{equation 17}
\begin{array}
[c]{l}
\quad (\varepsilon_{1}+\varepsilon_{2})E{\displaystyle\int_{0}^{T}}
|\nabla\psi_{\varepsilon_{1}}(X^{\varepsilon_{1}}_{r})||\nabla\psi_{\varepsilon_{2}}(X^{\varepsilon_{2}}_{r})|dr\medskip\\
\leq C\varepsilon_{1}\left(E\sup\limits_{0\leq r\leq T}|\nabla\psi_{\varepsilon_{1}}(X^{\varepsilon_{1}}_{r})|^{2+4\rho}\right)^{\frac{1}{2+4\rho}}
+C\varepsilon_{2}\left(E\sup\limits_{0\leq r\leq T}|\nabla\psi_{\varepsilon_{2}}(X^{\varepsilon_{2}}_{r})|^{2+4\rho}\right)^{\frac{1}{2+4\rho}}\medskip\\
\leq C\varepsilon_{1}\left(\frac{C}{\varepsilon_{1}^{2+3\rho}}\right)^{\frac{1}{2+4\rho}}
+C\varepsilon_{2}\left(\frac{C}{\varepsilon_{2}^{2+3\rho}}\right)^{\frac{1}{2+4\rho}}
\leq C\varepsilon_{1}^{\frac{\rho}{2+4\rho}}
+C\varepsilon_{2}^{\frac{\rho}{2+4\rho}}
\end{array}
\end{equation}
On the other hand, using Proposition \ref{Boundedness proposition of two operators} $(iii)$, it follows
\begin{equation}\label{equation 18}
\begin{array}
[c]{l}
\quad (\varepsilon_{1}+\varepsilon_{2})E{\displaystyle\int_{0}^{T}}
|\nabla\varphi_{\varepsilon_{1}}(X^{\varepsilon_{1}}_{r})||\nabla\varphi_{\varepsilon_{2}}(X^{\varepsilon_{2}}_{r})|dr\medskip\\
\leq \frac{\varepsilon_{1}+\varepsilon_{2}}{2}E{\displaystyle\int_{0}^{T}}
\left(|\nabla\varphi_{\varepsilon_{1}}(X^{\varepsilon_{1}}_{r})|^{2}+|\nabla\varphi_{\varepsilon_{2}}(X^{\varepsilon_{2}}_{r})|^{2}\right)dr
\leq C(\varepsilon_{1}+\varepsilon_{2}).
\end{array}
\end{equation}
Consequently, (\ref{Lipschitz sequence equation 1}), (\ref{equation 17}) and (\ref{equation 18}) allow to complete the proof.\hfill
\end{proof}

Now we are able to give our main results:
\begin{theorem} \label{Existence and uniqueness of two operators}
Suppose $(H_{1})-(H_{6})$ and $(H^{\prime}_{5})$ are satisfied and $(H^{\prime}_{4})$ holds with $\rho_{0}\ge1$. Moreover, we assume that $(C1)$ and either $(C2)$ or $(C3)$ hold for some $\lambda,\alpha,C_{1},C_{2},C_{3}$ and $C_{4}=\frac{1-\alpha}{K}$. Then there exists a unique solution $(X,Y,Z,V,U)$
of FBSVI (\ref{FBSVI}).
\end{theorem}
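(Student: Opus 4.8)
The plan is to construct the solution $(X,Y,Z,V,U)$ as the limit, as $\varepsilon\downarrow 0$, of the solutions $(X^{\varepsilon},Y^{\varepsilon},Z^{\varepsilon})$ of the penalized FBSDE (\ref{Penalized FBSVI with two operators}), and to deduce uniqueness directly from the monotonicity estimate of Proposition \ref{Difference 2}. The assumption $\rho_{0}\ge 1$ in $(H'_{4})$ is used twice: it makes the admissible range of exponents $\rho$ in Proposition \ref{Lipschitz proposition of two operators} nonempty, and it lets us invoke (\ref{equation 16}) with $q=2$, which will be needed to control $\int_{0}^{T}|\nabla\psi_{\varepsilon}(X^{\varepsilon}_{r})|\,dr$ in $L^{2}(\Omega)$. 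Fix such a $\rho$. Proposition \ref{Lipschitz proposition of two operators} then shows that $\{(X^{\varepsilon},Y^{\varepsilon},Z^{\varepsilon})\}$ is Cauchy in $S^{2}_{n}\times S^{2}_{m}\times M^{2}_{m\times d}$ and hence converges to a triple $(X,Y,Z)$ with $X,Y$ continuous.

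Next I would introduce $V^{\varepsilon}_{t}:=\int_{0}^{t}\nabla\psi_{\varepsilon}(X^{\varepsilon}_{r})\,dr$ and $U^{\varepsilon}_{t}:=\nabla\varphi_{\varepsilon}(Y^{\varepsilon}_{t})$, and identify their limits. From the identity $V^{\varepsilon}_{t}=x+\int_{0}^{t}b(r,X^{\varepsilon}_{r},Y^{\varepsilon}_{r},Z^{\varepsilon}_{r})\,dr+\int_{0}^{t}\sigma(r,X^{\varepsilon}_{r},Y^{\varepsilon}_{r})\,dB_{r}-X^{\varepsilon}_{t}$, the Lipschitz hypothesis $(H_{5})$ on $b,\sigma$, and the BDG inequality, one gets $V^{\varepsilon}\to V$ in $S^{2}_{n}$, where $V_{t}$ is given by the same formula with $(X^{\varepsilon},Y^{\varepsilon},Z^{\varepsilon})$ replaced by $(X,Y,Z)$; the analogous limit in the backward equation, together with the weak limit $U$ obtained below, yields $(a_{4})$. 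Since $\updownarrow V^{\varepsilon}\updownarrow_{[0,T]}=\int_{0}^{T}|\nabla\psi_{\varepsilon}(X^{\varepsilon}_{r})|\,dr$ is bounded in $L^{q}(\Omega)$, $1\le q\le\frac{3+\rho_{0}}{2}$, uniformly in $\varepsilon$ by (\ref{equation 16}), the lower semicontinuity of the total variation and Fatou's lemma give $V\in S^{2}_{n}\cap L^{1}(\Omega;BV([0,T];\mathbb{R}^{n}))$ with $V_{0}=0$; and Proposition \ref{Boundedness proposition of two operators}(iii) shows $\{U^{\varepsilon}\}$ is bounded in $M^{2}_{m}$, so some subsequence converges weakly in $M^{2}_{m}$ to a process $U$.

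It then remains to pass to the limit in the two subdifferential inclusions. For $(a_{3})$, I would use that $(J_{\varepsilon,\varphi}(Y^{\varepsilon}_{t}),\nabla\varphi_{\varepsilon}(Y^{\varepsilon}_{t}))\in\partial\varphi$ (by (\ref{Property of Yosida approximation})(c)) and $J_{\varepsilon,\varphi}(Y^{\varepsilon})\to Y$ in $M^{2}_{m}$ (by Proposition \ref{Boundedness proposition of two operators}(v)); combined with $U^{\varepsilon}\rightharpoonup U$ and the strong--weak closedness of the graph of the maximal monotone operator induced on $M^{2}_{m}$ by $\partial\varphi$ (Minty's device: pass to the limit in $E\int_{0}^{T}\langle U^{\varepsilon}_{t}-v^{*},J_{\varepsilon,\varphi}(Y^{\varepsilon}_{t})-v\rangle\,dt\ge 0$), this gives $(Y_{t},U_{t})\in\partial\varphi$, $d\mathbb{P}\otimes dt$-a.e. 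For $(a_{2})$, $(J_{\varepsilon,\psi}(X^{\varepsilon}_{r}),\nabla\psi_{\varepsilon}(X^{\varepsilon}_{r}))\in\partial\psi$ yields, for all $z\in\mathbb{R}^{n}$ and $0\le s\le t\le T$, $\int_{s}^{t}\langle z-J_{\varepsilon,\psi}(X^{\varepsilon}_{r}),dV^{\varepsilon}_{r}\rangle+\int_{s}^{t}\psi(J_{\varepsilon,\psi}(X^{\varepsilon}_{r}))\,dr\le (t-s)\psi(z)$, and I would pass to the limit using Proposition \ref{Boundedness proposition of two operators}(ii) (so that $J_{\varepsilon,\psi}(X^{\varepsilon})\to X$ uniformly, $\mathbb{P}$-a.s. along a subsequence, whence also $X_{t}\in Dom\psi$ and $\psi(X_{t})<\infty$ a.e.), the uniform convergence $V^{\varepsilon}\to V$ with the uniform total-variation bound (so that $dV^{\varepsilon}\rightharpoonup dV$ weakly-$*$ and the first integral converges to $\int_{s}^{t}\langle z-X_{r},dV_{r}\rangle$), and the lower semicontinuity of $\psi$ with Fatou's lemma for the second integral (which, taking $z=0$ and using (\ref{equation 16}) with $q=2$, also shows $\psi(X)\in L^{1}(\Omega\times[0,T])$). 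This establishes $(a_{1})$--$(a_{4})$, so $(X,Y,Z,V,U)$ solves (\ref{FBSVI}). For uniqueness, given two solutions with the same $x$ and coefficients, I would run the estimate of Proposition \ref{Difference 2} with $(\tilde b,\tilde\sigma,\tilde f,\tilde g)=(b,\sigma,f,g)$, so that $\Delta_{1}=0$ and hence $X=\tilde X$, $Y=\tilde Y$, $Z=\tilde Z$; then $(a_{4})$ forces $V=\tilde V$ and $U=\tilde U$.

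The step I expect to be the main obstacle is the verification of $(a_{2})$ in the limit, and in particular the passage to the limit in the Stieltjes integral $\int_{s}^{t}\langle z-X^{\varepsilon}_{r},dV^{\varepsilon}_{r}\rangle$: this requires combining the uniform-in-$\varepsilon$ bound on $\updownarrow V^{\varepsilon}\updownarrow_{[0,T]}$ from (\ref{equation 16}), the uniform convergence of both $X^{\varepsilon}$ and $J_{\varepsilon,\psi}(X^{\varepsilon})$ to $X$ from Proposition \ref{Boundedness proposition of two operators}(ii), and the weak-$*$ convergence of the measures $dV^{\varepsilon}$, all of which are available only because of the sharp estimates in Propositions \ref{proposition for high order estimates}--\ref{Lipschitz proposition of two operators}; the corresponding identification $U\in\partial\varphi(Y)$ for the backward equation is of the same nature but easier, resting on the strong convergence $J_{\varepsilon,\varphi}(Y^{\varepsilon})\to Y$ of Proposition \ref{Boundedness proposition of two operators}(v) and maximal monotonicity.
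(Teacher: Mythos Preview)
Your proposal is correct and follows essentially the same route as the paper: penalize, invoke Propositions \ref{Boundedness proposition of two operators}--\ref{Lipschitz proposition of two operators} to get the Cauchy property and the a priori bounds, pass to the limit in $(X^{\varepsilon},Y^{\varepsilon},Z^{\varepsilon},V^{\varepsilon},U^{\varepsilon})$, and check $(a_{1})$--$(a_{4})$; uniqueness via Proposition \ref{Difference 2}. The only cosmetic differences are that the paper writes the pre-limit inequality for $(a_{2})$ with $X^{\varepsilon}_{r}$ rather than $J_{\varepsilon,\psi}(X^{\varepsilon}_{r})$ in the Stieltjes integral (it uses the convexity of $\psi_{\varepsilon}$ directly rather than $(J_{\varepsilon,\psi}(x),\nabla\psi_{\varepsilon}(x))\in\partial\psi$), cites Proposition \ref{proposition of pardoux and rascanu 2011} explicitly for the Stieltjes-integral limit and the TV lower bound instead of arguing via weak-$*$ convergence of $dV^{\varepsilon}$, and identifies the weak limit $U$ by first showing $\bar{U}^{\varepsilon}_{s}=\int_{0}^{s}U^{\varepsilon}_{r}\,dr$ converges in $S^{2}_{m}$ (forcing the whole sequence, not just a subsequence, to converge); your subsequence argument is fine since all the other terms in the backward equation converge strongly, but you should note this to conclude that $U$ is unique and well defined.
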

\begin{proof} The uniqueness is a consequence of Proposition \ref{Difference 2}. Thus, it remains to show the existence.
From Proposition \ref{Lipschitz proposition of two operators}, we know that there exist $X\in S^{2}_{n}$, $Y\in S^{2}_{m}$, and $Z\in M^{2}_{m\times d}$,  such that
\begin{equation}\label{equation 21}
\begin{array}
[c]{l}%
\lim\limits_{\varepsilon\rightarrow0}E\Big\{\sup\limits_{0\leq r\leq T}|X_{r}^{\varepsilon}-X_{r}|^{2}+\sup\limits_{0\leq r\leq T}|Y_{r}^{\varepsilon}-Y_{r}|^{2}+{\displaystyle\int_{0}^{T}}|Z_{r}^{\varepsilon}-Z_{r}|^{2}dr\Big\}=0,
\end{array}
\end{equation}
and from Proposition \ref{Boundedness proposition of two operators} $(ii)$ and $(v)$, we have
\begin{equation}\label{equation 22}
\begin{array}
[c]{l}
\quad\lim\limits_{\varepsilon\rightarrow0}E\sup\limits_{0\leq r\leq T}|J_{\varepsilon,\psi}(X_{r}^{\varepsilon})-X_{r}|^{2}=0,
\quad\lim\limits_{\varepsilon\rightarrow0}E|Y_{r}-J_{\varepsilon,\varphi}(Y_{r}^{\varepsilon})|^{2}=0, \text{ for all } r\in[0,T],\medskip\\
\text{ and }\lim\limits_{\varepsilon\rightarrow0}{\displaystyle\int_{0}^{T}}E|Y_{r}-J_{\varepsilon,\varphi}(Y_{r}^{\varepsilon})|^{2}dr=0.
\end{array}
\end{equation}
Let us define $V_{s}^{\varepsilon}:={\displaystyle\int_{0}^{s}}\nabla\psi_{\varepsilon}(X_{r}^{\varepsilon})dr$. Then from (\ref{Penalized FBSVI with two operators}),
\[
X_{s}^{\varepsilon}+V_{s}^{\varepsilon}=x+{\displaystyle\int_{0}^{s}}b(r,X_{r}^{\varepsilon},Y_{r}^{\varepsilon},Z_{r}^{\varepsilon})dr
+{\displaystyle\int_{0}^{s}}\sigma(r,X_{r}^{\varepsilon},Y_{r}^{\varepsilon})dB_{r},~s\in[0,T].
\]
The Lipschitz condition for $b$ and $\sigma$, Proposition \ref{Lipschitz proposition of two operators} and the BDG inequality yield that
\[
E\sup\limits_{0\leq s\leq T}|V_{s}^{\varepsilon_{1}}-V_{s}^{\varepsilon_{2}}|^{2}\leq C\left(\varepsilon_{1}^{\frac{\rho}{2+4\rho}}
+\varepsilon_{2}^{\frac{\rho}{2+4\rho}}\right),~\varepsilon_{1},\varepsilon_{2}>0.
\]
Consequently, there exists $V\in S^{2}_{n}$, such that
\begin{equation}\label{limit equation for V}
\lim\limits_{\varepsilon\rightarrow0}E\sup\limits_{0\leq s\leq T}|V_{s}^{\varepsilon}-V_{s}|^{2}=0
\end{equation}
and
\[
X_{s}+V_{s}=x+{\displaystyle\int_{0}^{s}}b(r,X_{r},Y_{r},Z_{r})dr+{\displaystyle\int_{0}^{s}}\sigma(r,X_{r},Y_{r})dB_{r}.
\]
Then, from (\ref{equation 16}) and $V^{\varepsilon}(0)=0$ we have
$E\left\{\|V^{\varepsilon}\|^{q}_{BV([0,T];\mathbb{R}^{n})}\right\}=E\updownarrow V^{\varepsilon}\updownarrow^{q}_{[0,T]}\leq C$, for $1\leq q\leq\frac{3+\rho_{0}}{2}$.
In particular, $E\left\{\|V^{\varepsilon}\|_{BV([0,T];\mathbb{R}^{n})}\right\}\leq C$.
Moreover, if $\rho_{0}\ge1$, then also
\[
E\left\{\|V^{\varepsilon}\|^{2}_{BV([0,T];\mathbb{R}^{n})}\right\}\leq C.
\]
Recalling Proposition 1.25 \cite{PR-11} (see also Proposition \ref{proposition of pardoux and rascanu 2011} in the appendix)
as well as (\ref{equation 21}) and (\ref{limit equation for V}), we have
\begin{equation}\label{square integrable of bounded variation}
E\left\{\|V\|^{2}_{BV([0,T];\mathbb{R}^{n})}\right\}=E\updownarrow V\updownarrow^{2}_{[0,T]}\leq C,
\end{equation}
and for all $0\leq s\leq t\leq T$,
\begin{equation}\label{equation 24}
{\displaystyle\int_{s}^{t}}\langle X_{r}^{\varepsilon},dV_{r}^{\varepsilon}\rangle
\xrightarrow[]{\;\mathbb{P}\;}{\displaystyle\int_{s}^{t}}\langle X_{r},dV_{r}\rangle, ~as~ \varepsilon\rightarrow0.
\end{equation}
From (\ref{Property of Yosida approximation}-(a)) and the convexity of $\psi_{\varepsilon}$, we have
\[
\begin{array}
[c]{l}%
\psi(J_{\varepsilon,\psi}(x))\leq \psi_{\varepsilon}(x)\leq \psi_{\varepsilon}(z)+\langle x-z,\nabla\psi_{\varepsilon}(x)\rangle
\leq \psi(z)+\langle x-z,\nabla\psi_{\varepsilon}(x)\rangle, ~z\in\mathbb{R}^{n},
\end{array}
\]
Then for all $0\leq s\leq t\leq T$, it follows that
\[
\begin{array}
[c]{l}
{\displaystyle\int_{s}^{t}}\psi(J_{\varepsilon,\psi}(X_{r}^{\varepsilon}))dr\leq (t-s)\psi(z)+{\displaystyle\int_{s}^{t}}\langle X_{r}^{\varepsilon}-z,dV_{r}^{\varepsilon}\rangle,~z\in\mathbb{R}^{n}.
\end{array}
\]
From (\ref{equation 22}), (\ref{equation 24}), Fatou's lemma and the fact that $\psi$ is l.s.c., letting $\varepsilon\rightarrow0$, we deduce
\begin{equation}\label{equation 25}
{\displaystyle\int_{s}^{t}}\psi(X_{r})dr\leq (t-s)\psi(z)+{\displaystyle\int_{s}^{t}}\langle X_{r}-z,dV_{r}\rangle, z\in\mathbb{R}^{n}, ~0\leq s\leq t\leq T, ~a.s.
\end{equation}
This proves the inequality in Definition \ref{definition of the solution of FBSDE with two operators} $(a_{2})$.
Moreover, taking $z=0$ in (\ref{equation 25}), we have
\[
\begin{array}
[c]{rl}
0\leq E{\displaystyle\int_{0}^{T}}\psi(X_{r})dr\leq E\Big|{\displaystyle\int_{0}^{T}}\langle X_{r},dV_{r}\rangle\Big|
\leq \frac{1}{2}E\sup\limits_{0\leq t\leq T}|X_{r}|^{2}
+\frac{1}{2}E\updownarrow V\updownarrow^{2}_{[0,T]}
<\infty,
\end{array}
\]
i.e.,  $X$ takes its values in $Dom\psi$ and $\psi(X)\in L^{1}(\Omega\times[0,T];\mathbb{R})$.

On the other hand, for any $\varepsilon>0$, we define $U^{\varepsilon}_{s}=\nabla\varphi_{\varepsilon}(Y^{\varepsilon}_{s})$ and $\bar{U}^{\varepsilon}_{s}={\displaystyle\int_{0}^{s}}U^{\varepsilon}_{r}dr$. Then from (\ref{Penalized FBSVI with two operators}) and Proposition \ref{Lipschitz proposition of two operators} we deduce the existence of a $\bar{U}\in S^{2}_{m}$, s.t.
\begin{equation}\label{equation 1}
\lim\limits_{\varepsilon\rightarrow0}E\sup\limits_{0\leq s\leq T}|\bar{U}_{s}^{\varepsilon}-\bar{U}_{s}|^{2}=0.
\end{equation}
Moreover, Proposition \ref{Boundedness proposition of two operators} $(iii)$ yields
$\sup\limits_{\varepsilon>0}\mathbb{E}{\displaystyle\int_{0}^{T}}|U^{\varepsilon}_{r}|^{2}dr\leq C$. Consequently, the sequence $\{\bar{U}^{\varepsilon}\}_{\varepsilon>0}$ is bounded in $L^{2}(\Omega;H^{1}(0,T))$. Thus there exists a subsequence which converges weakly to a limit in $L^{2}(\Omega;H^{1}(0,T))$. But from (\ref{equation 1}) we conclude that this limit is nothing but $\bar{U}_{t}$, and the whole sequence $\{\bar{U}^{\varepsilon}\}_{\varepsilon>0}$ converges weakly to $\bar{U}$. Moreover, $\bar{U}$ takes the form $\bar{U}_{s}={\displaystyle\int_{0}^{s}}U_{r}dr,~s\in[0,T]$, where
$U^{\varepsilon}\xrightarrow[\text{weakly}]{\;L^{2}(\Omega\times[0,T])\;}U$. Now we take the limit in probability in FBSDE (\ref{Penalized FBSVI with two operators}), and we get
\[
Y_{s}+{\displaystyle\int_{s}^{T}}U_{r}dr=g(X_{T})+{\displaystyle\int
_{s}^{T}}f(r,X_{r},Y_{r},Z_{r})dr-{\displaystyle\int_{s}^{T}}Z_{r}dB_{r},~\text{ for all } s\in[0,T],\;a.s..
\]
Finally, it remains to show that $(Y_{s},U_{s})\in\partial\varphi$. In fact, from $U^{\varepsilon}_{s}\in\partial\varphi(J_{\varepsilon,\varphi}Y^{\varepsilon}_{s})$, it follows ,
\[
\langle U^{\varepsilon}_{s},v_{s}-J_{\varepsilon,\varphi}(Y^{\varepsilon}_{s})\rangle+\varphi
(J_{\varepsilon}(Y^{\varepsilon}_{s}))\leq\varphi(v_{s})~\text{ for all } v\in M^{2}_{m}.
\]
Then integrating both sides from $a$ to $b$,   for all $0\leq a\leq b\leq T$, we obtain
\[
{\displaystyle\int_{a}^{b}}\langle U^{\varepsilon}_{s},v_{s}-J_{\varepsilon,\varphi}(Y^{\varepsilon}_{s})\rangle ds+{\displaystyle\int_{a}^{b}}\varphi
(J_{\varepsilon,\varphi}(Y^{\varepsilon}_{s}))ds\leq {\displaystyle\int_{a}^{b}}\varphi(v_{s})ds.
\]
Let us take now the limit as $\varepsilon\rightarrow 0$. By using (\ref{equation 22}), the weak convergence of $U^{\varepsilon}$ to $U$ as well as the fact that $\varphi$ is a proper convex l.s.c. function, we get
\begin{equation}\label{equation 2}
{\displaystyle\int_{a}^{b}}\langle U_{s},v_{s}-Y_{s}\rangle ds+{\displaystyle\int_{a}^{b}}\varphi(Y_{s})ds\leq{\displaystyle\int_{a}^{b}}\varphi(v_{s})ds.
\end{equation}
Indeed,
${\displaystyle\int_{a}^{b}}\langle U_{s}^{\varepsilon},v_{s}-Y_{s}^{\varepsilon}\rangle ds\xrightarrow[\;\;]{\mathbb{P}}{\displaystyle\int_{a}^{b}}\langle U_{s},v_{s}-Y_{s}\rangle ds$
and
${\displaystyle\int_{a}^{b}}\langle U_{s}^{\varepsilon},J_{\varepsilon,\varphi}(Y_{s}^{\varepsilon})-Y_{s}^{\varepsilon}\rangle ds\xrightarrow[\;\;]{\mathbb{P}}0$.
Consi\-dering that $a,b,v$ are arbitrary, we conclude from (\ref{equation 2}) that $(Y_{s},U_{s})\in\partial\varphi$, $d\mathbb{P}\otimes dt ~a.e.$

\hfill
\end{proof}

\section{Existence of viscosity solutions of PVIs}
In this section we will prove that the solution of our FBSVI provides a probabilistic interpretation for the solution of PVI (\ref{PVI}).
For this we assume that
\begin{itemize}
\item[$(H_{7})$] The coefficients $b,\sigma,f,g$ are all deterministic and jointly continuous and $m=1$.
\end{itemize}

We collect the following assumptions which we denote by $(A1)$:
\begin{itemize}
\item [$(A1)$] The conditions $(H_{1})-(H_{7}),(H^{\prime}_{5})$ are satisfied and $(H^{\prime}_{4})$ holds with $\rho_{0}\ge1$. Moreover, $(C1)$ and either $(C2)$ or $(C3)$ hold true for some $\lambda,\alpha,C_{1},C_{2},C_{3}$ and $C_{4}=\frac{1-\alpha}{K}$.
\end{itemize}
For each $(t,x)\in[0,T]\times Dom\psi$, we consider the following FBSVI over the interval $[t,T]$:
\begin{equation}\label{FBSVI related to PDE}
\left\{
\begin{array}
[c]{l}%
dX_{s}+\partial\psi(X_{s})ds\ni b(s,X_{s},Y_{s},Z_{s})ds+\sigma(s,X_{s},Y_{s})dB_{s},\medskip\\
-dY_{s}+\partial\varphi(Y_{s})ds\ni f(s,X_{s},Y_{s},Z_{s})ds-Z_{s}dB_{s},\quad s\in[t,T],\medskip\\
X_{t}=x, \quad Y_{T}=g(X_{T}),
\end{array}
\right.
\end{equation}

It is clearly that Theorem \ref{Existence and uniqueness of two operators} remains true on the interval $[t,T]$, and we denote the unique solution of equation (\ref{FBSVI related to PDE}) by $(X_{s}^{t,x},Y_{s}^{t,x},Z_{s}^{t,x},V_{s}^{t,x},U_{s}^{t,x})$. We define
\begin{equation}\label{definition of u}
u(t,x):=Y_{t}^{t,x}, \text{ for } t\in[0,T],x\in Dom\psi.
\end{equation}
Observe that under $(H_{7})$, $Y_{t}^{t,x}$ is deterministic. Indeed, by using the standard "time shifting" technique, we can check that $Y_{s}^{t,x}$ is $\mathcal{F}_{s}^{t}$ adapted, where $\mathcal{F}_{s}^{t}:=\sigma\{B_{r}:t\leq r\leq s\}$ augmented by the $\mathbb{P}$-null sets. The function $u(t,x)=Y_{t}^{t,x}$ has the following properties:
\begin{proposition}\label{Property of u}
Suppose $(A1)$ holds, then $u(t,x)\in Dom\varphi$, for all $(t,x)\in[0,T]\times Dom\psi$, and $u\in C([0,T]\times Dom\psi)$.
\end{proposition}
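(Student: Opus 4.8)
The plan is to prove, in order: (i) $u(t,x)\in Dom\varphi$; (ii) $u$ is Lipschitz in $x$ uniformly in $t$; (iii) $t\mapsto u(t,x)$ is right continuous and then (iv) left continuous. Since (ii) gives $|u(t,x)-u(t_{0},x_{0})|\le C_{T}|x-x_{0}|+|u(t,x_{0})-u(t_{0},x_{0})|$, joint continuity is an immediate consequence of (ii) together with (iii)--(iv). For (i): under $(H_{7})$ the quantity $u(t,x)=Y_{t}^{t,x}$ is deterministic, so $\varphi(u(t,x))=E\varphi(Y_{t}^{t,x})$; by the convergence $J_{\varepsilon,\varphi}(Y_{t}^{\varepsilon})\to Y_{t}^{t,x}$ in $L^{2}$ (hence a.s. along a subsequence) established in the proof of Theorem~\ref{Existence and uniqueness of two operators} (see~(\ref{equation 22})), the lower semicontinuity of $\varphi$, Fatou's lemma and Proposition~\ref{Boundedness proposition of two operators}$(iv)$ (which bounds $E\varphi(J_{\varepsilon,\varphi}(Y_{t}^{\varepsilon}))$ uniformly in $\varepsilon$), I get $\varphi(u(t,x))\le\liminf_{\varepsilon}E\varphi(J_{\varepsilon,\varphi}(Y_{t}^{\varepsilon}))<\infty$. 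For (ii): passing to the limit $\varepsilon\to0$ in the bound $|Y_{t}^{\varepsilon,t,x_{1}}-Y_{t}^{\varepsilon,t,x_{2}}|\le C_{T}|x_{1}-x_{2}|$ of Proposition~\ref{proposition of difference dominated by increasing function} and Remark~\ref{Lipschitz remark} ($C_{T}$ independent of $\varepsilon$ and $t$), using the convergence of Proposition~\ref{Lipschitz proposition of two operators}, yields $|u(t,x_{1})-u(t,x_{2})|\le C_{T}|x_{1}-x_{2}|$.

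Two ingredients underlie (iii)--(iv). The first is the flow identity $Y_{s}^{t,x}=u(s,X_{s}^{t,x})$ a.s. for every $s\in[t,T]$. For the penalized equations this is Proposition~\ref{proposition for realtion of u and Y} combined with the flow property of FBSDEs, $Y_{s}^{\varepsilon,t,x}=\theta^{\varepsilon}(s,X_{s}^{\varepsilon,t,x})$; I then let $\varepsilon\to0$, using that $\theta^{\varepsilon}(s,\cdot)\to u(s,\cdot)$ locally uniformly (pointwise convergence, again deterministic under $(H_{7})$, from Proposition~\ref{Lipschitz proposition of two operators}, together with the uniform Lipschitz bound~(\ref{Lipschitz for u}) and the $\varepsilon$-uniform linear bound $|\theta^{\varepsilon}(s,y)|\le|\theta^{\varepsilon}(s,0)|+C_{T}|y|$ with $\sup_{\varepsilon}|\theta^{\varepsilon}(s,0)|<\infty$), that $X_{s}^{\varepsilon,t,x}\to X_{s}^{t,x}$ and $Y_{s}^{\varepsilon,t,x}\to Y_{s}^{t,x}$ in $L^{2}$, and a dominated convergence argument; this simultaneously yields the Markov character of the FBSVI. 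The second ingredient is the continuity of the forward SVI in its initial time: applying It\^{o}'s formula to $|X_{r}^{t,x}-x|^{2}$ on $[t,s]$ and dominating $-\int_{t}^{s}\langle X_{r}^{t,x}-x,dV_{r}^{t,x}\rangle$ by $(s-t)\psi(x)$, via the variational inequality of Definition~\ref{definition of the solution of FBSDE with two operators}$(a_{2})$ tested against the constant (hence admissible) path $z\equiv x\in Dom\psi$, together with the linear growth of $b,\sigma$ (from $(H_{5})$, $(H^{\prime}_{5})$) and the a priori bounds of Remark~\ref{L2 estimates of the solution of fbsvi}, gives $E|X_{s}^{t,x}-x|^{2}\le C(s-t)^{1/2}$ for $s\ge t$, and symmetrically $E|X_{t}^{t^{\prime\prime},x}-x|^{2}\le C(t-t^{\prime\prime})^{1/2}$ for $t^{\prime\prime}\le t$, with $C$ uniform for the relevant times in a compact set.

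Now (iii)--(iv) follow. For right continuity, fix $(t,x)$ and $t^{\prime}>t$; writing the backward equation of the $(t,x)$-solution on $[t,t^{\prime}]$, taking expectations (the $dB$-integral has zero mean since $Z^{t,x}\in M^{2}$) and inserting $EY_{t^{\prime}}^{t,x}=E[u(t^{\prime},X_{t^{\prime}}^{t,x})]$ from the flow identity, I obtain
\[
|u(t,x)-u(t^{\prime},x)|\le C_{T}\,E\big|X_{t^{\prime}}^{t,x}-x\big|+E\!\int_{t}^{t^{\prime}}\!\big|f(r,X_{r}^{t,x},Y_{r}^{t,x},Z_{r}^{t,x})\big|\,dr+E\!\int_{t}^{t^{\prime}}\!\big|U_{r}^{t,x}\big|\,dr.
\]
The first term tends to $0$ by the forward-flow estimate; the second is $\le(t^{\prime}-t)^{1/2}\big(E\int_{t}^{t^{\prime}}|f|^{2}dr\big)^{1/2}\to0$ by Cauchy--Schwarz, the linear growth of $f$ and Remark~\ref{L2 estimates of the solution of fbsvi}; the third is $\le(t^{\prime}-t)^{1/2}\big(E\int_{t}^{T}|U_{r}^{t,x}|^{2}dr\big)^{1/2}\to0$ by Proposition~\ref{Boundedness proposition of two operators}$(iii)$ passed to the limit (weak lower semicontinuity of the $L^{2}$-norm applied to $U^{\varepsilon}\rightharpoonup U$). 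Left continuity is obtained identically, now starting the solution at $(t^{\prime\prime},x)$ with $t^{\prime\prime}<t$ and using $EY_{t}^{t^{\prime\prime},x}=E[u(t,X_{t}^{t^{\prime\prime},x})]$, provided the a priori bounds and the constant of Proposition~\ref{Boundedness proposition of two operators}$(iii)$ are uniform with respect to the initial time on $[t-\delta,t]$ — which holds by time-translation invariance of the data. Combining right and left continuity in $t$ with the uniform Lipschitz continuity in $x$ from (ii) gives $u\in C([0,T]\times Dom\psi)$.

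The main obstacle I expect is the rigorous passage to the limit in the flow identity $Y_{s}^{\varepsilon,t,x}=\theta^{\varepsilon}(s,X_{s}^{\varepsilon,t,x})$: one must secure the locally uniform convergence $\theta^{\varepsilon}\to u$ (needing the $\varepsilon$-uniform Lipschitz bound and an $\varepsilon$-uniform linear growth bound), then couple it with the $L^{2}$-convergence of $X^{\varepsilon}$ through a domination argument, and finally verify that every a priori estimate invoked (Remark~\ref{L2 estimates of the solution of fbsvi}, Proposition~\ref{Boundedness proposition of two operators}) is genuinely uniform in the starting time. By contrast, the seemingly delicate role of the reflection term $V$ is dealt with cleanly by testing $(a_{2})$ against the constant path $z\equiv x$, which is precisely why $x\in Dom\psi$ (rather than $Int(Dom\psi)$) is enough.
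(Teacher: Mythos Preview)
Your approach is essentially correct but takes a genuinely different route from the paper's. The paper never invokes the flow identity $Y_s^{t,x}=u(s,X_s^{t,x})$; it works entirely with the stability estimate of Proposition~\ref{Difference 2}. For right continuity (Step~1 there), it compares $Y^{t_n,x_n}$ with $Y^{t,x}$ directly on $[t_n,T]$ and reduces to $E|x_n-X_{t_n}^{t,x}|^2\to 0$, controlled via the forward equation together with $E\updownarrow V^{t,x}\updownarrow_{[t,t_n]}^2\to 0$ (dominated convergence on the BV norm, using~(\ref{square integrable of bounded variation})). For left continuity (Step~2), it \emph{extends} the $(t,x)$-solution backward on $[t_n,t]$ by constants---$X\equiv x$, $Y\equiv Y_t^{t,x}$, $Z\equiv 0$, $dV=x^*\,ds$ for a fixed $x^*\in\partial\psi(x)$, $U\equiv u^*\in\partial\varphi(Y_t^{t,x})$---and then applies Proposition~\ref{Difference 2} with the resulting perturbed coefficients $\tilde b,\tilde\sigma,\tilde f$. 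Your Markovian strategy is natural, and your handling of the reflection in the forward-flow estimate by testing $(a_2)$ at the constant $z=x$ is actually cleaner than the paper's BV argument. The price is the limit passage you already flag: beyond domination, note that $X_s^{t,x}\in Dom\psi$ is only guaranteed $d\mathbb{P}\otimes ds$-a.e., so the identity $Y_s^{t,x}=u(s,X_s^{t,x})$ is a priori available only for almost every $s$, and upgrading it to a specific $s=t'$ would seem to require continuity of $u$ in time---which is circular. This can be repaired (extend $u(s,\cdot)$ Lipschitz-continuously to $\overline{Dom\psi}$ and derive the flow identity directly from FBSVI uniqueness on $[s,T]$ rather than through the penalized limit), but the paper's direct-comparison route simply sidesteps the issue.
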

\begin{proof}
From Proposition \ref{Boundedness proposition of two operators} $(iv)$ and the convergence in (\ref{equation 22}), we deduce that $\varphi(u(t,x))=E\varphi(Y_{t}^{t,x})<\infty$ for all $x\in Dom\psi$. Consequently, $u(t,x)\in Dom\varphi$, $(t,x)\in[0,T]\times Dom\psi$. Let us prove that $u\in C([0,T]\times Dom\psi)$. We split this proof into two steps.

\textbf{Step 1:} We first prove that $u$ is right continuous w.r.t. $t$ and continuous w.r.t. $x$. For this we assume that $(t_{n},x_{n})\rightarrow(t,x)$ as $n\rightarrow\infty$, for $t_{n}\ge t$. We put $\hat{X}^{n}_{s}=X_{s}^{t_{n},x_{n}}-X_{s}^{t,x},\hat{Y}^{n}_{s}=Y_{s}^{t_{n},x_{n}}-Y_{s}^{t,x},\hat{Z}^{n}_{s}=Z_{s}^{t_{n},x_{n}}-Z_{s}^{t,x}$, $s\in[t_{n},T]$.
From (\ref{Estimate 13})-(\ref{Estimate 16}) of Proposition \ref{Difference 2} (Now for $[t_{n},T]$), it follows
\begin{equation}\label{Estimate 17}
\begin{array}
[c]{l}
e^{-\lambda T}E|\hat{X}^{n}_{T}|^{2}+\bar{\lambda}_{1}\|\hat{X}^{n}\|^{2}_{M_{\lambda}[t_{n},T]}\leq K(C_{1}+K)\|\hat{Y}^{n}\|^{2}_{M_{\lambda}[t_{n},T]}\medskip\\
\qquad\qquad+(KC_{2}+k_{1}^{2})\|\hat{Z}^{n}\|^{2}_{M_{\lambda}[t_{n},T]}+e^{-\lambda t_{n}}E|\hat{X}^{n}_{t_{n}}|^{2},
\end{array}
\end{equation}
\begin{equation}\label{Estimate 19}
\|\hat{Y}^{n}\|^{2}_{M_{\lambda}[t_{n},T]}\leq B(\bar{\lambda}_{2},T)\left[
k_{2}^{2}e^{-\lambda T}E|\hat{X}^{n}_{T}|^{2}+KC_{3}\|\hat{X}^{n}\|^{2}_{M_{\lambda}[t_{n},T]}\right],
\end{equation}
\begin{equation}\label{Estimate 20}
\|\hat{Z}^{n}\|^{2}_{M_{\lambda}[t_{n},T]}\leq \frac{A(\bar{\lambda}_{2},T)}{\alpha}\left[
k_{2}^{2}e^{-\lambda T}E|\hat{X}^{n}_{T}|^{2}+KC_{3}\|\hat{X}^{n}\|^{2}_{M_{\lambda}[t_{n},T]}\right].
\end{equation}
Indeed, we observe that here, unlike in (\ref{Estimate 13})-(\ref{Estimate 16}), the coefficients $b,\tilde{b}$, $\sigma,\tilde{\sigma}$ and $f,\tilde{f}$ as well as $g,\tilde{g}$ coincide, so we can consider $\delta\rightarrow 0$ in (\ref{Estimate 13})-(\ref{Estimate 16}).
In the following $C$ denotes a constant independent of $(t,x)$ and $(t_{n},x_{n})$, which may vary from line to line.

Using the compatibility conditions $(C1)$,$(C2)$ or $(C1)$, $(C3)$, we can check with the help of (\ref{Estimate 17})-(\ref{Estimate 20}), that
\begin{equation}
|\hat{X}^{n}|^{2}_{\lambda,\beta,[t_{n},T]}\leq Ce^{-\lambda t_{n}}E|\hat{X}^{n}_{t_{n}}|^{2}\leq C(1+e^{-\lambda T})E|x_{n}-X_{t_{n}}^{t,x}|^{2}.
\end{equation}
Then plugging this inequality into (\ref{Estimate 19}) and (\ref{Estimate 20}), we obtain
\[\|\hat{X}^{n}\|^{2}_{M_{\lambda}[t_{n},T]}+\|\hat{Y}^{n}\|^{2}_{M_{\lambda}[t_{n},T]}+\|\hat{Z}^{n}\|^{2}_{M_{\lambda}[t_{n},T]}\leq CE|x_{n}-X_{t_{n}}^{t,x}|^{2}.
\]
By applying  BDG inequality to the equations for $\hat{X}^{n}$, $\hat{Y}^{n}$, we can prove
\begin{equation}\label{equation 5}
\|\hat{X}^{n}\|^{2}_{S_{\lambda}[t_{n},T]}+\|\hat{Y}^{n}\|^{2}_{S_{\lambda}[t_{n},T]}+\|\hat{Z}^{n}\|^{2}_{M_{\lambda}[t_{n},T]}
\leq CE|x_{n}-X_{t_{n}}^{t,x}|^{2}.
\end{equation}
On the other hand, using Proposition \ref{Priori estimate for X and Y}, we have
\[
\begin{array}
[c]{l}
\qquad\|X^{t,x}_{s}\|^{2}_{S_{\lambda}[t,T]}+\|Y^{t,x}_{s}\|^{2}_{S_{\lambda}[t,T]}+\|Z^{t,x}_{s}\|^{2}_{M_{\lambda}[t,T]}\medskip\\
\leq C\left(e^{-\lambda t}|x|^{2}+\|b^{0}\|^{2}_{M_{\lambda}[t,T]}+\|\sigma^{0}\|^{2}_{M_{\lambda}[t,T]}+\|f^{0}\|^{2}_{M_{\lambda}[t,T]}+E|g^{0}|^{2}\right)\medskip\\
\leq C\left(e^{-(\lambda\wedge0) T}|x|^{2}+\|b^{0}\|^{2}_{M_{\lambda}}+\|\sigma^{0}\|^{2}_{M_{\lambda}}+\|f^{0}\|^{2}_{M_{\lambda}}+E|g^{0}|^{2}\right)\medskip\\
\leq C(1+|x|^{2}).
\end{array}
\]
Then we deduce from
\[X_{s}^{t,x}- x_{n}=x-x_{n}+{\displaystyle\int_{t}^{s}}b(r,X_{r}^{t,x},Y_{r}^{t,x},Z_{r}^{t,x})dr
+{\displaystyle\int_{t}^{s}}\sigma(r,X_{r}^{t,x},Y_{r}^{t,x})dB_{r}-dV_{s}^{t,x},
\]
that
\begin{equation}\label{equation 6}
E\Big\{\sup\limits_{t\leq s\leq t_{n}}|X_{s}^{t,x}- x_{n}|^{2}\Big\}\leq C\Big\{|x-x_{n}|^{2}+(1+|x|^{2})|t_{n}-t|+E\updownarrow V^{t,x}\updownarrow_{[t,t_{n}]}^{2}\Big\}.
\end{equation}
Consequently, from (\ref{equation 5}) and (\ref{equation 6}), we have
\[E\sup\limits_{t_{n}\leq s\leq T}|Y_{s}^{t_{n},x_{n}}-Y_{s}^{t,x}|^{2}\leq C\Big\{|x-x_{n}|^{2}+(1+|x|^{2})|t_{n}-t|+E\updownarrow V^{t,x}\updownarrow_{[t,t_{n}]}^{2}\Big\}.
\]
From (\ref{limit equation for V}), we see that $V^{t,x}$ is a process with continuous paths. But for any continuous bounded variation function $g$, we have $\updownarrow g\updownarrow_{[0,t]}$ is continuous in $t$. Thus, $\updownarrow V^{t,x}\updownarrow_{[t,t_{n}]}\rightarrow 0$,  as $n\rightarrow\infty$, $\mathbb{P}$-a.s., and Dominated Convergence Theorem (recall (\ref{square integrable of bounded variation}), i.e., $\updownarrow V^{t,x}\updownarrow_{[t,T]}\in  L^{2}(\Omega)$), yields that $E\updownarrow V^{t,x}\updownarrow_{[t,t_{n}]}^{2}\rightarrow 0, \text{ as } n\rightarrow\infty.$ Thus, $E\sup\limits_{t_{n}\leq s\leq T}|Y_{s}^{t_{n},x_{n}}-Y_{s}^{t,x}|^{2}\rightarrow0$, as $n\rightarrow\infty$ and thanks to the $L^{2}$-continuity of $Y^{t,x}$ we have $E|Y_{t_{n}}^{t,x}-Y_{t}^{t,x}|^{2}\rightarrow0$ as $n\rightarrow\infty$. Consequently,
\[
\begin{array}
[c]{l}
|u(t_{n},x_{n})-u(t,x)|^{2}=E|Y_{t_{n}}^{t_{n},x_{n}}-Y_{t}^{t,x}|^{2}
\leq 2E\sup\limits_{t_{n}\leq s\leq T}|Y_{s}^{t_{n},x_{n}}-Y_{s}^{t,x}|^{2}+2E|Y_{t_{n}}^{t,x}-Y_{t}^{t,x}|^{2}\medskip\\
\qquad\qquad\qquad\qquad\qquad\rightarrow0, \text{ as }n\rightarrow\infty.
\end{array}
\]
\textbf{Step 2:} Let us now show that $u$ is left continuous w.r.t. $t$ and continuous w.r.t. $x$.
For this we consider $(t_{n},x_{n})\rightarrow(t,x)$ as $n\rightarrow\infty$, for $t_{n}\leq t$. We extend $X^{t,x}_{s},Y^{t,x}_{s},Z^{t,x}_{s},V^{t,x}_{s},U^{t,x}_{s}$ to $s\in[t_{n},T]$ by choosing $X^{t,x}_{s}=x$, $Y^{t,x}_{s}=Y^{t,x}_{t}$,
$Z^{t,x}_{s}=0$, $dV^{t,x}_{s}=x^{\ast}ds$, $U^{t,x}_{s}=u^{\ast}$, $s\in[t_{n},t]$, for arbitrarily chosen $x^{\ast}\in \partial\psi(x)$ and $u^{\ast}\in \partial\varphi(Y^{t,x}_{t})$. Then we know that
\[
\langle z-X_{r}^{t,x},x^{\ast}\rangle+\psi(X_{r}^{t,x})\leq \psi(z),~ \text{ for all } z\in\mathbb{R}^{n}, ~ t_{n}\leq r\leq t, ~a.s.
\]
which yields that
\[
{\displaystyle\int_{a}^{b}}\langle z-X_{r}^{t,x},dV_{r}^{t,x}\rangle+{\displaystyle\int_{a}^{b}}\psi(X_{r}^{t,x})dr\leq (b-a)\psi(z),~ \text{ for all } z\in\mathbb{R}^{n}, ~ ~t_{n}\leq a\leq b\leq t, ~a.s.
\]
Moreover, it holds that
${\displaystyle\int_{a}^{b}}\langle z-X_{r}^{t,x},dV_{r}^{t,x}\rangle+{\displaystyle\int_{a}^{b}}\psi(X_{r}^{t,x})dr\leq (b-a)\psi(z)$, for all $z\in\mathbb{R}^{n}$, $t_{n}\leq a\leq b\leq T, ~a.s.$ and $(Y_{r}^{t,x},U_{r}^{t,x})\in\partial\varphi,~d\mathbb{P}\otimes dt~a.e.$ on $\Omega\times[t_{n},T]$.

Using the above extension, we have
\[
\left\{
\begin{array}
[c]{l}
X_{s}^{t,x}+V_{s}^{t,x}=x+{\displaystyle\int_{t_{n}}^{s}}\tilde{b}(r,X_{r}^{t,x},Y_{r}^{t,x},Z_{r}^{t,x})ds
+{\displaystyle\int_{t_{n}}^{s}}\tilde{\sigma}(r,X_{r}^{t,x},Y_{r}^{t,x})dB_{r},\medskip\\
Y_{s}^{t,x}+{\displaystyle\int_{s}^{T}}U_{r}^{t,x}dr=g(X_{T}^{t,x})+{\displaystyle\int
_{s}^{T}}\tilde{f}( r,X_{r}^{t,x},Y_{r}^{t,x},Z_{r}^{t,x})dr-{\displaystyle\int_{s}^{T}}Z_{r}^{t,x}dB_{r},~ s\in[t_{n},T],\;a.s.
\end{array}
\right.
\]
where we define $\tilde{b}(r,x,y,z)=x^{\ast}1_{[t_{n},t]}(r)+b(r,x,y,z)1_{[t,T]}(r)$, $\tilde{\sigma}(r,x,y)=\sigma(r,x,y)1_{[t,T]}(r)$ and
$\tilde{f}(r,x,y,z)=u^{\ast}1_{[t_{n},t]}(r)+f(r,x,y,z)1_{[t,T]}(r)$.

From  Proposition \ref{Difference 2}, there exists a constant $C$ which does not depend on $(t_{n}, x_{n})$ such that
\begin{equation}\label{estimate for Y with different intial time and value}
\begin{array}
[c]{l}
\quad  E\Big\{\sup\limits_{t_{n}\leq s\leq T}|X_{s}^{t,x}-X_{s}^{t_{n},x_{n}}|^{2}+\sup\limits_{t_{n}\leq s\leq T}|Y_{s}^{t,x}-Y_{s}^{t_{n},x_{n}}|^{2}+{\displaystyle\int_{t_{n}}^{T}}|Z_{s}^{t,x}-Z_{s}^{t_{n},x_{n}}|^{2}ds\Big\}\leq C\Delta_{3},
\end{array}
\end{equation}
where
\begin{equation}\label{Delta 2}
\begin{array}
[c]{l}%
\Delta_{3}=e^{-\lambda t_{n}}|x-x_{n}|^{2}+E{\displaystyle\int_{t_{n}}^{T}}|\tilde{b}-b|^{2}(s,X_{s}^{t,x},Y_{s}^{t,x},Z_{s}^{t,x})ds
\medskip\\
\qquad+E{\displaystyle\int_{t_{n}}^{T}}|\tilde{f}-f|^{2}(s,X_{s}^{t,x},Y_{s}^{t,x},Z_{s}^{t,x})ds
+E{\displaystyle\int_{t_{n}}^{T}}|\tilde{\sigma}-\sigma|^{2}(s,X_{s}^{t,x},Y_{s}^{t,x})ds\medskip\\
=e^{-\lambda t_{n}}|x-x_{n}|^{2}+E{\displaystyle\int_{t_{n}}^{t}}|x^{\ast}-b(s,X_{t}^{t,x},Y_{t}^{t,x},0)|^{2}ds
\medskip\\
\qquad+E{\displaystyle\int_{t_{n}}^{t}}|u^{\ast}-f(s,X_{t}^{t,x},Y_{t}^{t,x},0)|^{2}ds
+E{\displaystyle\int_{t_{n}}^{t}}|\sigma(s,X_{t}^{t,x},Y_{t}^{t,x})|^{2}ds.
\end{array}
\end{equation}
From Remark \ref{L2 estimates of the solution of fbsvi} we also have
\begin{equation}\label{boundness}
\begin{array}
[c]{l}%
\qquad E\sup\limits_{t\leq r\leq T}|X_{r}^{t,x}|^{2}+E\sup\limits_{t\leq r\leq T}|Y_{r}^{t,x}|^{2}+E{\displaystyle\int_{t}^{T}}|Z_{r}^{t,x}|^{2}dr\medskip\\
\leq CE\left\{|x|^{2}+|g^{0}|^{2}+{\displaystyle\int_{t}^{T}}|b^{0}(r)|^{2}dr
+{\displaystyle\int_{t}^{T}}|f^{0}(r)|^{2}dr+
{\displaystyle\int_{t}^{T}}|\sigma^{0}(r)|^{2}dr\right\}.
\end{array}
\end{equation}
Hence, by combining (\ref{estimate for Y with different intial time and value})-(\ref{boundness}) and considering that $x^{\ast}$, $u^{\ast}$ only depends on $t$ and $x$, we obtain that
$E\sup\limits_{t_{n}\leq s\leq T}|Y_{s}^{t,x}-Y_{s}^{t_{n},x_{n}}|^{2}\rightarrow 0, \text{ as } n\rightarrow\infty$.
Consequently,
\[
\begin{array}
[c]{ll}
|u(t_{n},x_{n})-u(t,x)|^{2}=E|Y_{t_{n}}^{t_{n},x_{n}}-Y_{t}^{t,x}|^{2}\leq E\sup\limits_{t_{n}\leq s\leq T}|Y_{s}^{t_{n},x_{n}}-Y_{s}^{t,x}|^{2}\rightarrow 0, \text{ as } n\rightarrow\infty.
\end{array}
\] \hfill
\end{proof}
\begin{remark}\label{remark for extended u}
From (\ref{equation 5}), setting $t_{n}=t$, we know that $u(t,x)$ is Lipschitz continuous w.r.t. $x$ on $[0,T]\times Dom\psi$.
\end{remark}
\begin{theorem}\label{viscosity solution existence}
Suppose $(A1)$ holds and $Dom\psi$ is locally compact, then
the function $u(t,x)=Y^{t,x}_{t}$, $(t,x)\in[0,T]\times Dom\psi$ is a viscosity solution of PVI (\ref{PVI}).
\end{theorem}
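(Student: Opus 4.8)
The plan is to derive both halves of Definition \ref{definition of viscosity by smooth function} from a penalization--stability argument, in the spirit of the nonlinear Feynman--Kac formula of Pardoux and Tang \cite{PT-99}. First I would set, for $(t,x)\in[0,T]\times\mathbb{R}^{n}$, $u_{\varepsilon}(t,x):=Y_{t}^{\varepsilon,t,x}$, where $(X^{\varepsilon,t,x},Y^{\varepsilon,t,x},Z^{\varepsilon,t,x})$ solves the penalized FBSDE $(\ref{Penalized FBSVI with two operators})$ on $[t,T]$ started at $x$. Under $(H_{7})$ this $u_{\varepsilon}$ is a deterministic, continuous function, Lipschitz in $x$ uniformly in $(t,\varepsilon)$ by Proposition \ref{proposition of difference dominated by increasing function} and Remark \ref{Lipschitz remark}. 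The key structural fact I would record is that $u_{\varepsilon}$ is a viscosity solution of the \emph{penalized} quasilinear equation obtained by replacing $\partial\varphi,\partial\psi$ in $(\ref{PVI})$ by the single-valued monotone maps $\nabla\varphi_{\varepsilon},\nabla\psi_{\varepsilon}$, namely
\[
\frac{\partial u_{\varepsilon}}{\partial s}+(\mathcal{L}u_{\varepsilon})\big(s,x,u_{\varepsilon},(\nabla u_{\varepsilon})^{\ast}\sigma\big)+f\big(s,x,u_{\varepsilon},(\nabla u_{\varepsilon})^{\ast}\sigma\big)=\nabla\varphi_{\varepsilon}(u_{\varepsilon})+\langle\nabla\psi_{\varepsilon}(x),\nabla u_{\varepsilon}\rangle ,
\]
with $u_{\varepsilon}(T,\cdot)=g$; this is obtained exactly as in \cite{PT-99} (see also \cite{MC-01}), using the unique solvability of the penalized FBSDE (Lemma \ref{Solvability of Penalized FBSDE}) and the Lipschitz flow estimate of Remark \ref{Lipschitz remark}. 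Finally I would check, from Proposition \ref{Lipschitz proposition of two operators}, Proposition \ref{Property of u} and the $\varepsilon$-uniform a priori estimates, that $u_{\varepsilon}\to u$ locally uniformly on $[0,T]\times Dom\psi$ (pointwise convergence from Proposition \ref{Lipschitz proposition of two operators}, and an $\varepsilon$-uniform modulus of continuity in $x$ and $t$ from the a priori bounds; the local compactness of $Dom\psi$ serves here, as in Section 3, to carry out the localised comparisons).

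It then suffices to prove the subsolution property, the supersolution one being symmetric (replace $\varphi_{-}^{\prime},\partial\psi_{\ast}$ by $\varphi_{+}^{\prime},\partial\psi^{\ast}$ and local maxima by local minima). So let $\Phi\in C^{1,2}([0,T]\times Dom\psi)$ and let $(t_{0},x_{0})\in[0,T)\times Dom\psi$ be a (w.l.o.g. strict) local maximum of $u-\Phi$ with $u(t_{0},x_{0})=\Phi(t_{0},x_{0})$; recall $u(t_{0},x_{0})\in Dom\varphi$ by Proposition \ref{Property of u}. By the local uniform convergence $u_{\varepsilon}\to u$ there are local maxima $(t_{\varepsilon},x_{\varepsilon})$ of $u_{\varepsilon}-\Phi$ with $(t_{\varepsilon},x_{\varepsilon})\to(t_{0},x_{0})$ and $u_{\varepsilon}(t_{\varepsilon},x_{\varepsilon})\to u(t_{0},x_{0})$. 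Writing $q_{\varepsilon}:=\nabla\Phi(t_{\varepsilon},x_{\varepsilon})$ and using that $u_{\varepsilon}$ is in particular a viscosity subsolution of the penalized equation with the smooth test function $\Phi$, one obtains, at $(t_{\varepsilon},x_{\varepsilon})$,
\[
\frac{\partial\Phi}{\partial s}(t_{\varepsilon},x_{\varepsilon})+\frac{1}{2}Tr\big(\sigma\sigma^{\ast}(t_{\varepsilon},x_{\varepsilon},u_{\varepsilon}(t_{\varepsilon},x_{\varepsilon}))D^{2}\Phi(t_{\varepsilon},x_{\varepsilon})\big)+\big\langle b\big(t_{\varepsilon},x_{\varepsilon},u_{\varepsilon}(t_{\varepsilon},x_{\varepsilon}),q_{\varepsilon}^{\ast}\sigma\big),q_{\varepsilon}\big\rangle+f\big(t_{\varepsilon},x_{\varepsilon},u_{\varepsilon}(t_{\varepsilon},x_{\varepsilon}),q_{\varepsilon}^{\ast}\sigma\big)\geq\nabla\varphi_{\varepsilon}\big(u_{\varepsilon}(t_{\varepsilon},x_{\varepsilon})\big)+\big\langle\nabla\psi_{\varepsilon}(x_{\varepsilon}),q_{\varepsilon}\big\rangle .
\]
By joint continuity of $b,\sigma,f$ and smoothness of $\Phi$, the left-hand side converges to $\frac{\partial\Phi}{\partial s}(t_{0},x_{0})+(\mathcal{L}u)(t_{0},x_{0},u(t_{0},x_{0}),(\nabla\Phi)^{\ast}\sigma)+f(t_{0},x_{0},u(t_{0},x_{0}),(\nabla\Phi)^{\ast}\sigma)$, with $(\mathcal{L}u)$ understood, as in Definition \ref{definition of viscosity by smooth function}, with the second-order part evaluated on $D^{2}\Phi$.

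The main obstacle is the passage to the limit in the two right-hand terms. For the $\psi$-term this is essentially the definition of $\partial\psi_{\ast}$: since $\nabla\psi_{\varepsilon}(x_{\varepsilon})\in\partial\psi\big(J_{\varepsilon,\psi}(x_{\varepsilon})\big)$ with $J_{\varepsilon,\psi}(x_{\varepsilon})=x_{\varepsilon}-\varepsilon\nabla\psi_{\varepsilon}(x_{\varepsilon})\to x_{0}$ and $q_{\varepsilon}\to\nabla\Phi(t_{0},x_{0})$, the pair $\big(J_{\varepsilon,\psi}(x_{\varepsilon}),q_{\varepsilon}\big)$ is an admissible approximating sequence in the $\liminf$ defining $\partial\psi_{\ast}(x_{0},\nabla\Phi(t_{0},x_{0}))$, whence $\liminf_{\varepsilon\to0}\langle\nabla\psi_{\varepsilon}(x_{\varepsilon}),q_{\varepsilon}\rangle\geq\partial\psi_{\ast}(x_{0},\nabla\Phi(t_{0},x_{0}))$. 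For the $\varphi$-term I would first use $\nabla\varphi_{\varepsilon}(y)\in\partial\varphi(J_{\varepsilon,\varphi}(y))$ together with $\varepsilon|\nabla\varphi_{\varepsilon}(y)|^{2}\le 2\varphi_{\varepsilon}(y)$ and the bounds of Proposition \ref{Boundedness proposition of two operators} to get $J_{\varepsilon,\varphi}(u_{\varepsilon}(t_{\varepsilon},x_{\varepsilon}))\to u(t_{0},x_{0})$; then, when $u(t_{0},x_{0})\in Int(Dom\varphi)$, the family $\{\nabla\varphi_{\varepsilon}(u_{\varepsilon}(t_{\varepsilon},x_{\varepsilon}))\}$ is bounded and every cluster value lies in $\partial\varphi(u(t_{0},x_{0}))=[\varphi_{-}^{\prime}(u(t_{0},x_{0})),\varphi_{+}^{\prime}(u(t_{0},x_{0}))]$ by closedness of the maximal monotone graph $\partial\varphi$, while the boundary case is handled by the left-continuity and monotonicity of $\varphi_{-}^{\prime}$ recorded in Remark \ref{viscosity solution in closed superjet}; in every case $\liminf_{\varepsilon\to0}\nabla\varphi_{\varepsilon}(u_{\varepsilon}(t_{\varepsilon},x_{\varepsilon}))\geq\varphi_{-}^{\prime}(u(t_{0},x_{0}))$. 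Taking $\liminf$ in the displayed inequality and combining the three limits gives exactly the subsolution inequality of Definition \ref{definition of viscosity by smooth function} at $(t_{0},x_{0})$; together with the continuity of $u$, the terminal condition $u(T,\cdot)=g$ and $u(t,x)\in Dom\varphi$, this shows that $u$ is a viscosity solution of $(\ref{PVI})$. I expect the genuinely delicate point to be the control of $\nabla\varphi_{\varepsilon}$ along the maximizing sequence — ruling out wrong-sign blow-up and handling the endpoints of $Dom\varphi$ — for which the $\varepsilon$-uniform $L^{p}$-estimates (Proposition \ref{proposition for high order estimates}) and part $(iii)$ of Proposition \ref{Boundedness proposition of two operators} are essential.
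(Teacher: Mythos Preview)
Your approach is essentially the same as the paper's: both penalise, invoke the Pardoux--Tang representation so that $u_{\varepsilon}$ is a viscosity solution of the penalised PDE, establish locally uniform convergence $u_{\varepsilon}\to u$, and then pass to the limit in the subsolution inequality, treating the $\psi$-term exactly as you do via $\nabla\psi_{\varepsilon}(x_{\varepsilon})\in\partial\psi(J_{\varepsilon,\psi}(x_{\varepsilon}))$ and the very definition of $\partial\psi_{\ast}$. The paper works with superjets and Lemma~6.1 of \cite{CIL-92} while you use test functions directly, but this is only a cosmetic difference.

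The one genuine tactical difference is in the $\varphi$-term. You argue by cluster points: bound $\nabla\varphi_{\varepsilon}(u_{\varepsilon}(t_{\varepsilon},x_{\varepsilon}))$ when $u(t_{0},x_{0})\in Int(Dom\varphi)$ via local boundedness of $\partial\varphi$, then use graph closure, and sketch a separate monotonicity/left-continuity argument at the right endpoint. This is correct but forces a case split. The paper instead multiplies the penalised inequality by the positive quantity $u_{\varepsilon_{n}}(t_{n},x_{n})-y$ for a fixed $y<u(t,x)$, $y\in Dom\varphi$, and uses the subdifferential inequality $\varphi(J_{\varepsilon,\varphi}w)\leq\varphi(y)+(w-y)\nabla\varphi_{\varepsilon}(w)$ to replace $\nabla\varphi_{\varepsilon}$ by values of $\varphi$; after taking $\liminf$ (lower semicontinuity of $\varphi$) and dividing out, one lets $y\uparrow u(t,x)$ to recover $\varphi_{-}^{\prime}(u(t,x))$. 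This trick handles interior and right-endpoint cases uniformly, never needs boundedness of $\nabla\varphi_{\varepsilon}$, and makes the case $u(t,x)=\inf Dom\varphi$ (where $\varphi_{-}^{\prime}=-\infty$) the only trivial one to set aside. It is the cleaner device for exactly the point you correctly flagged as ``genuinely delicate''.
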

As a consequence of Theorem \ref{uniqueness of pde} and Theorem \ref{viscosity solution existence}, we have
\begin{theorem}\label{uniqueness of pvi}
We assume that $Dom\psi$ is locally compact, $\sigma(t,x,y)$ does not depend on $y$ and $f$ is Lipschitz continuous w.r.t. $y$. Then under the assumptions of $(A1)$, PVI (\ref{PVI}) has a unique viscosity solution in the class of functions which are Lipschitz continuous in $x$ uniformly w.r.t. $t$ and continuous in $t$.
\end{theorem}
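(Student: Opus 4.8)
The plan is to obtain this statement as an immediate corollary of the existence result (Theorem \ref{viscosity solution existence}) and the uniqueness result (Theorem \ref{uniqueness of pde}), so that the real work consists only in checking that the hypotheses of both theorems are satisfied under the present assumptions and in matching their conclusions to the class of functions in the statement.

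First I would establish existence. Under $(A1)$ together with the local compactness of $Dom\psi$, Theorem \ref{viscosity solution existence} applies verbatim and yields that the function $u(t,x)=Y^{t,x}_t$ (which is deterministic because $(H_7)$ holds) is a viscosity solution of PVI (\ref{PVI}). It then remains to check that this $u$ lies in the prescribed class: Proposition \ref{Property of u} gives $u\in C([0,T]\times Dom\psi)$ with $u(t,x)\in Dom\varphi$, and Remark \ref{remark for extended u}, which is read off from the estimate (\ref{equation 5}) with $t_n=t$, gives that $u$ is Lipschitz continuous in $x$ with a constant independent of $t$. Hence $u$ is a viscosity solution that is Lipschitz in $x$ uniformly w.r.t. $t$ and continuous in $t$.

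Next I would establish uniqueness by invoking Theorem \ref{uniqueness of pde}, verifying its hypotheses from $(A1)$ plus the two extra assumptions made here. Local compactness of $Dom\psi$ is assumed; joint continuity of $b,\sigma,f,g$ follows from $(H_7)$; the Lipschitz continuity of $b$ in $(x,y,z)$ is $(H_5)(i)$; the Lipschitz continuity of $f$ in $(x,z)$ is $(H_5)(iv)$, and the additional hypothesis here that $f$ is Lipschitz in $y$ upgrades this to Lipschitz continuity of $f$ in $(x,y,z)$ — this is precisely why that hypothesis is imposed, since $(H_5)(v)$ only supplies one-sided monotonicity in $y$; the assumption that $\sigma(t,x,y)$ does not depend on $y$ is taken as hypothesis, while its Lipschitz continuity in $x$ follows from $(H_5)(ii)$ combined with $(H'_5)$ (i.e.\ $k_1=0$); finally $(H_1)$ and $(H_2)$ (with $m=1$ from $(H_7)$) deliver $(H'_1)$ and $(H'_2)$. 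Therefore Theorem \ref{uniqueness of pde} applies, and PVI (\ref{PVI}) has at most one viscosity solution which is Lipschitz in $x$ uniformly w.r.t. $t$ and continuous in $t$.

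Combining the two parts, $u(t,x)=Y^{t,x}_t$ is a viscosity solution of PVI (\ref{PVI}) in that class, and it is the only one. I do not expect any genuine analytic obstacle here beyond what has already been overcome in Theorems \ref{uniqueness of pde} and \ref{viscosity solution existence}; the one point that needs care is the bookkeeping, namely making sure that the function produced by the probabilistic construction really meets the regularity demanded by the uniqueness class (the content of Proposition \ref{Property of u} and Remark \ref{remark for extended u}) and that the extra Lipschitz-in-$y$ condition on $f$, which is not part of $(A1)$, is exactly what bridges the monotonicity hypothesis $(H_5)(v)$ used in the FBSVI theory and the Lipschitz hypothesis required by Theorem \ref{uniqueness of pde}.
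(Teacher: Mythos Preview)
Your proposal is correct and matches the paper's approach exactly: the paper simply states this theorem as ``a consequence of Theorem \ref{uniqueness of pde} and Theorem \ref{viscosity solution existence}'' with no further proof, and you have carried out precisely the hypothesis-checking that is implicit in that sentence, including the use of Proposition \ref{Property of u} and Remark \ref{remark for extended u} to place $u$ in the required regularity class.
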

\begin{proof}[Proof of Theorem \ref{viscosity solution existence}]
We use the following penalized equation to approximate (\ref{FBSVI related to PDE}):
\begin{equation}\label{Penalized FBSDE from t}
\left\{
\begin{array}
[c]{l}%
X_{s}^{\varepsilon}+{\displaystyle\int_{t}^{s}}\nabla\psi_{\varepsilon}(X_{r}^{\varepsilon})dr=x+{\displaystyle\int_{t}^{s}}b(r,X_{r}^{\varepsilon},Y_{r}^{\varepsilon},Z_{r}^{\varepsilon})dr
+{\displaystyle\int_{t}^{s}}\sigma(r,X_{r}^{\varepsilon},Y_{r}^{\varepsilon})dB_{r},\medskip\\
Y_{s}^{\varepsilon}+{\displaystyle\int_{s}^{T}}\nabla\varphi_{\varepsilon}(Y_{r}^{\varepsilon})dr=g(X_{T}^{\varepsilon})+{\displaystyle\int
_{s}^{T}}f(r,X_{r}^{\varepsilon},Y_{r}^{\varepsilon},Z_{r}^{\varepsilon})dr-{\displaystyle\int_{s}^{T}}Z_{r}^{\varepsilon}dB_{r},
\end{array}
\right.
\end{equation}
$s\in[t,T]$. From Theorem \ref{Solvability of Penalized FBSDE in appendix}, we know that for any $x\in\mathbb{R}^{n}$, the above FBSDE has a unique solution $\left\{(X_{s}^{\varepsilon; t,x},Y_{s}^{\varepsilon; t,x},Z_{s}^{\varepsilon; t,x}),~s\in[t,T]\right\}$. Putting
\[u^{\varepsilon}(t,x):=Y_{t}^{\varepsilon; t,x},\qquad 0\leq t\leq T,~x\in\mathbb{R}^{n},
\]
We can argue similarly to Proposition \ref{Property of u} in order to show that $u^{\varepsilon}$ is continuous on $[0,T]\times \mathbb{R}^{n}$. Moreover, with the help of Theorem 5.1 \cite{PT-99} we can prove that $u^{\varepsilon}(t,x)$
is the viscosity solution of the following backward quasilinear second-order parabolic PDE:
\begin{equation}\label{penalized PVI}
\left\{
\begin{array}
[c]{l}%
{\displaystyle\frac{\partial u^{\varepsilon}}{\partial s}(s,x)}+(\mathcal{L}u^{\varepsilon})(s,x,u^{\varepsilon}(s,x),(\nabla u^{\varepsilon}(s,x))^{\ast}\sigma(s,x,u^{\varepsilon}(s,x)))\medskip\\
\qquad+f(s,x,u^{\varepsilon}(s,x),(\nabla u^{\varepsilon}(s,x))^{\ast}\sigma(s,x,u^{\varepsilon}(s,x)))=\nabla\varphi_{\varepsilon}(u^{\varepsilon}(s,x))+\langle \nabla\psi_{\varepsilon}(x),\nabla u^{\varepsilon}(s,x)\rangle, \medskip\\
\qquad\qquad\qquad\qquad\qquad\qquad\qquad\qquad\qquad\qquad\qquad\qquad (s,x) \in[0,T]\times \mathbb{R}^{n},\medskip\\
u^{\varepsilon}(T,x)=g(x),\quad x\in\mathbb{R}^{n}.
\end{array}
\right.
\end{equation}
Similarly to the proof of Proposition \ref{Lipschitz proposition of two operators} $(ii)$, taking $\varepsilon_{2}\rightarrow0$, we can prove that
\begin{equation}\label{equation 2 w.r.t. u}
\begin{array}
[c]{l}
|u^{\varepsilon}(t,x)-u(t,x)|^{2}\leq E\sup\limits_{t\leq s\leq T}|Y_{s}^{\varepsilon; t,x}-Y_{s}^{t,x}|^{2}\medskip\\
\leq C(1+|x|^{3+\rho_{0}})\left[1+|x|^{3+\rho_{0}}+\psi^{1+2\rho}(x)\right]\varepsilon^{\frac{\rho}{2+4\rho}},
\text{ for all } (t,x)\in[0,T]\times Dom\psi.
\end{array}
\end{equation}
Here $C$ is a constant which does not depend on $(t,x)$ and $\varepsilon$.

Now we will show that $u$ is a subsolution of PVI (\ref{PVI}). From Lemma 6.1 \cite{CIL-92} we know that for any point $(p,q,X)\in\mathcal{P}^{2,+}u(t,x)$, there exist sequences:
\[
0<\varepsilon_{n}\rightarrow0, \quad (t_{n},x_{n})\rightarrow(t,x),\quad (p_{n},q_{n},X_{n})\in\mathcal{P}^{2,+}u^{\varepsilon_{n}}(t_{n},x_{n}),
\]
such that
\[(p_{n},q_{n},X_{n})\rightarrow(p,q,X).
\]
For any $n$, using that $(p_{n},q_{n},X_{n})\in\mathcal{P}^{2,+}u^{\varepsilon_{n}}(t_{n},x_{n})$, we obtain
\begin{equation}\label{equation 3}
\begin{array}
[c]{l}
-p_{n}-\frac{1}{2}Tr(\sigma\sigma^{\ast}(t_{n},x_{n},u^{\varepsilon_{n}}(t_{n},x_{n}))X_{n})
-\langle b(t_{n},x_{n},u^{\varepsilon_{n}}(t_{n},x_{n}),q^{\ast}_{n}\sigma(t_{n},x_{n},u^{\varepsilon_{n}}(t_{n},x_{n}))),
q_{n}\rangle\medskip\\
-f(t_{n},x_{n},u^{\varepsilon_{n}}(t_{n},x_{n}),q^{\ast}_{n}\sigma(t_{n},x_{n},u^{\varepsilon_{n}}(t_{n},x_{n})))\leq -\nabla\varphi_{\varepsilon_{n}}(u^{\varepsilon_{n}}(t_{n},x_{n}))-\langle \nabla\psi_{\varepsilon_{n}}(x_{n}),q_{n}\rangle.
\end{array}
\end{equation}
We can assume that $u(t,x)> \inf(Dom\varphi)$. Indeed, if we have $u(t,x)=\inf(Dom\varphi)$, then $\varphi^{\prime}_{-}(u(t,x))=-\infty$, and inequality (\ref{subsolution}) would hold obviously. Let $y\in Dom\varphi, y<u(t,x)$. Since $u^{\varepsilon}$ converges to $u$ uniformly on compacts (see (\ref{equation 2 w.r.t. u})), there exists $n_{0}\in\mathbb{N}$ s.t. $y<u^{\varepsilon_{n}}(t_{n},x_{n})$, $n>n_{0}$. Multiplying $u^{\varepsilon_{n}}(t_{n},x_{n})-y$ with both sides of (\ref{equation 3}) and using
\[
\begin{array}
[c]{rl}
\varphi(J_{\varepsilon,\varphi}x)  \leq \varphi_{\varepsilon}(x)
 \leq \varphi_{\varepsilon}(z)+(x-z,\nabla\varphi_{\varepsilon}(x))
\leq \varphi(z)+(x-z,\nabla\varphi_{\varepsilon}(x)), ~ z\in\mathbb{R},
\end{array}
\]
we obtain
\begin{equation}\label{equation 4}
\begin{array}
[c]{l}
\Big[-p_{n}-\frac{1}{2}Tr(\sigma\sigma^{\ast}(t_{n},x_{n},u^{\varepsilon_{n}}(t_{n},x_{n}))X_{n})
-\langle b(t_{n},x_{n},u^{\varepsilon_{n}}(t_{n},x_{n}),q^{\ast}_{n}\sigma(t_{n},x_{n},u^{\varepsilon_{n}}(t_{n},x_{n}))),
q_{n}\rangle\medskip\\
\qquad\qquad\qquad-f(t_{n},x_{n},u^{\varepsilon_{n}}(t_{n},x_{n}),q^{\ast}_{n}\sigma(t_{n},x_{n},u^{\varepsilon_{n}}(t_{n},x_{n})))
\Big]\Big[u^{\varepsilon_{n}}(t_{n},x_{n})-y\Big]\medskip\\
\qquad\qquad\qquad +\langle \nabla\psi_{\varepsilon_{n}}(x_{n}),q_{n}\rangle\Big[u^{\varepsilon_{n}}(t_{n},x_{n})-y\Big]
+\varphi(J_{\varepsilon_{n},\varphi}u^{\varepsilon_{n}}(t_{n},x_{n}))\leq \varphi(y).
\end{array}
\end{equation}
Let us take now $\liminf_{n\rightarrow\infty}$ in the above inequality. Recalling (\ref{Property of Yosida approximation}-c), $\nabla\psi_{\varepsilon_{n}}(x_{n})\in\partial\psi(J_{\varepsilon_{n},\psi}(x_{n}))$, $J_{\varepsilon_{n},\psi}(x_{n})\rightarrow x$ and $J_{\varepsilon_{n},\varphi}(u^{\varepsilon_{n}}(t_{n},x_{n}))\rightarrow u(t,x)$, the lower limit in (\ref{equation 4}) yields
\[
\begin{array}
[c]{ll}
\Big[-p-\frac{1}{2}Tr(\sigma\sigma^{\ast}(t,x,u(t,x))X)-\langle b(t,x,u(t,x),q^{\ast}\sigma(t,x,u(t,x))),q\rangle\medskip\\
\qquad\qquad\qquad-f(t,x,u(t,x),q^{\ast}\sigma(t,x,u(t,x)))\Big]\Big[u(t,x)-y\Big]\medskip\\
\qquad\qquad\qquad\qquad+\partial\psi_{\ast}(x,q)\Big[u(t,x)-y\Big]+\varphi(u(t,x))\leq \varphi(y).
\end{array}
\]
Then
\[
\begin{array}
[c]{l}
-p-\frac{1}{2}Tr(\sigma\sigma^{\ast}(t,x,u(t,x))X)-\langle b(t,x,u(t,x),q^{\ast}\sigma(t,x,u(t,x))),q\rangle\medskip\\
\qquad-f(t,x,u(t,x),q^{\ast}\sigma(t,x,u(t,x)))\leq -\frac{\varphi(u(t,x))-\varphi(y)}{u(t,x)-y}-\partial\psi_{\ast}(x,q),\text{ for all } y<u(t,x),
\end{array}
\]
and taking the limit $y\rightarrow u(t,x)$ yields (\ref{subsolution}). Therefore $u$ is a viscosity subsolution of PVI (\ref{PVI}).
Similarly, we prove that $u$ is a viscosity supersolution of PVI (\ref{PVI}).\hfill
\end{proof}

\section{Appendix}
\subsection{A priori estimates for penalized FBSDEs}
In this subsection, following the ideas of \cite{MC-01}, we give a priori estimates for the solution of the following penalized FBSDE:
\begin{equation}\label{Penalized FBSDE with two operators in appendix}
\left\{
\begin{array}
[c]{l}%
X_{s}^{\varepsilon}+{\displaystyle\int_{t}^{s}}\nabla\psi_{\varepsilon}(X_{r}^{\varepsilon})dr
=\xi+{\displaystyle\int_{t}^{s}}b(r,X_{r}^{\varepsilon},Y_{r}^{\varepsilon},Z_{r}^{\varepsilon})dr
+{\displaystyle\int_{t}^{s}}\sigma(r,X_{r}^{\varepsilon},Y_{r}^{\varepsilon},Z_{r}^{\varepsilon})dB_{r},\medskip\\
Y_{s}^{\varepsilon}+{\displaystyle\int_{s}^{T}}\nabla\varphi_{\varepsilon}(Y_{r}^{\varepsilon})dr=g(X_{T}^{\varepsilon})+{\displaystyle\int
_{s}^{T}}f(r,X_{r}^{\varepsilon},Y_{r}^{\varepsilon},Z_{r}^{\varepsilon})dr-{\displaystyle\int_{s}^{T}}Z_{r}^{\varepsilon}dB_{r}, ~s\in[t,T],
\end{array}
\right.
\end{equation}
where $t\in[0,T)$, $\xi\in L^{2}(\Omega,\mathcal{F}_{t},\mathbb{P};\mathbb{R}^{n})$ and the coefficients are su\-pposed to satisfy $(H_{1})-(H_{5})$.

\begin{lemma}\label{Difference of two solutions}
Assume $(H_{1})$-$(H_{5})$ hold. Suppose that $(X^{\varepsilon,t,\xi_{i}},Y^{\varepsilon,t,\xi_{i}},Z^{\varepsilon,t,\xi_{i}})$ is the solution of FBSDE (\ref{Penalized FBSDE with two operators in appendix}) with $\xi=\xi_{i}\in L^{2}(\Omega,\mathcal{F}_{t},\mathbb{P};\mathbb{R}^{n})$ for both $i=1,2$.
Let us put $\hat{\xi}=\xi_{1}-\xi_{2}$, $\hat{X}^{\varepsilon}=X^{\varepsilon,t,\xi_{1}}-X^{\varepsilon,t,\xi_{2}}$, $\hat{Y}^{\varepsilon}=Y^{\varepsilon,t,\xi_{1}}-Y^{\varepsilon,t,\xi_{2}}$
and $\hat{Z}^{\varepsilon}=Z^{\varepsilon,t,\xi_{1}}-Z^{\varepsilon,t,\xi_{2}}$. Then, for arbitrary $\lambda\in\mathbb{R}$ and arbitrary positive constants $C_{1},C_{2},C_{3},C_{4}$, we have
\begin{equation}\label{Estimate 1}
\begin{array}
[c]{ll}%
e^{-\lambda T}E|\hat{X}_{T}^{\varepsilon}|^{2}+\bar{\lambda}_{1}\|\hat{X}^{\varepsilon}\|^{2}_{M_{\lambda}[t,T]}\leq
e^{-\lambda t}E|\hat{\xi}|^{2}+ K(C_{1}+K)\|\hat{Y}^{\varepsilon}\|^{2}_{M_{\lambda}[t,T]}\medskip\\
\qquad\qquad\qquad\qquad\qquad\qquad\qquad\qquad+(KC_{2}+k_{1}^{2})\|\hat{Z}^{\varepsilon}\|^{2}_{M_{\lambda}[t,T]}.
\end{array}
\end{equation}
Furthermore, if in addition $KC_{4}=1-\alpha$ for some $0<\alpha<1$, then
\begin{equation}\label{Estimate 3}
\|\hat{Y}^{\varepsilon}\|^{2}_{M_{\lambda}[t,T]}\leq B(\bar{\lambda}_{2},T-t)\left[
k_{2}^{2}e^{-\lambda T}E|\hat{X}_{T}^{\varepsilon}|^{2}+KC_{3}\|\hat{X}^{\varepsilon}\|^{2}_{M_{\lambda}[t,T]}\right],
\end{equation}
\begin{equation}\label{Estimate 4}
\|\hat{Z}^{\varepsilon}\|^{2}_{M_{\lambda}[t,T]}\leq \frac{A(\bar{\lambda}_{2},T-t)}{\alpha}\left[
k_{2}^{2}e^{-\lambda T}E|\hat{X}_{T}^{\varepsilon}|^{2}+KC_{3}\|\hat{X}^{\varepsilon}\|^{2}_{M_{\lambda}[t,T]}\right].
\end{equation}
Here $\bar{\lambda}_{1}=\lambda-K(2+C^{-1}_{1}+C^{-1}_{2})-K^{2}$ and $\bar{\lambda}_{2}=-\lambda-2\gamma-K(C^{-1}_{3}+C^{-1}_{4})$.
Recall that $A$ and $B$ are defined in (\ref{notations}) as $A(\lambda,s)=e^{-(\lambda\wedge0)s}$ and $B(\lambda,s)={\displaystyle\int_{0}^{s}}e^{-\lambda r}dr$ .
\end{lemma}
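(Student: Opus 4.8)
The plan is to derive all three inequalities by applying It\^{o}'s formula to exponentially weighted squared differences and exploiting the monotonicity of the Yosida approximations $\nabla\psi_{\varepsilon}$ and $\nabla\varphi_{\varepsilon}$ (each being the gradient of a convex function, hence a monotone operator). Throughout write $\hat{b}_{s}:=b(s,X^{\varepsilon,t,\xi_{1}}_{s},Y^{\varepsilon,t,\xi_{1}}_{s},Z^{\varepsilon,t,\xi_{1}}_{s})-b(s,X^{\varepsilon,t,\xi_{2}}_{s},Y^{\varepsilon,t,\xi_{2}}_{s},Z^{\varepsilon,t,\xi_{2}}_{s})$ and similarly $\hat{\sigma}_{s},\hat{f}_{s}$.

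For (\ref{Estimate 1}), I would apply It\^{o}'s formula to $s\mapsto e^{-\lambda s}|\hat{X}^{\varepsilon}_{s}|^{2}$ on $[t,T]$. The drift contribution of the penalization term is $-2e^{-\lambda s}\langle\hat{X}^{\varepsilon}_{s},\nabla\psi_{\varepsilon}(X^{\varepsilon,t,\xi_{1}}_{s})-\nabla\psi_{\varepsilon}(X^{\varepsilon,t,\xi_{2}}_{s})\rangle\leq0$ by monotonicity of $\nabla\psi_{\varepsilon}$, so it may be discarded after taking expectations; the stochastic integral vanishes in expectation by a standard localization argument, legitimate since the penalized solutions lie in $S^{2}_{n}\times S^{2}_{m}\times M^{2}_{m\times d}$ (cf.\ Lemma \ref{Solvability of Penalized FBSDE}). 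Next one bounds $2\langle\hat{X}^{\varepsilon}_{s},\hat{b}_{s}\rangle$ via $(H_{5})(i)$ and Young's inequality in the form $2K|\hat{X}^{\varepsilon}_{s}||\hat{Y}^{\varepsilon}_{s}|\leq KC_{1}^{-1}|\hat{X}^{\varepsilon}_{s}|^{2}+KC_{1}|\hat{Y}^{\varepsilon}_{s}|^{2}$ and $2K|\hat{X}^{\varepsilon}_{s}||\hat{Z}^{\varepsilon}_{s}|\leq KC_{2}^{-1}|\hat{X}^{\varepsilon}_{s}|^{2}+KC_{2}|\hat{Z}^{\varepsilon}_{s}|^{2}$, and bounds $|\hat{\sigma}_{s}|^{2}\leq K^{2}(|\hat{X}^{\varepsilon}_{s}|^{2}+|\hat{Y}^{\varepsilon}_{s}|^{2})+k_{1}^{2}|\hat{Z}^{\varepsilon}_{s}|^{2}$ via $(H_{5})(ii)$. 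Collecting the coefficient of $|\hat{X}^{\varepsilon}_{s}|^{2}$, which is exactly $-\lambda+2K+KC_{1}^{-1}+KC_{2}^{-1}+K^{2}=-\bar{\lambda}_{1}$, and moving it to the left yields (\ref{Estimate 1}); the coefficients of $|\hat{Y}^{\varepsilon}_{s}|^{2}$ and $|\hat{Z}^{\varepsilon}_{s}|^{2}$ add up to $KC_{1}+K^{2}=K(C_{1}+K)$ and $KC_{2}+k_{1}^{2}$ respectively.

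For (\ref{Estimate 3}) and (\ref{Estimate 4}) I would apply It\^{o}'s formula to $s\mapsto e^{-\lambda s}|\hat{Y}^{\varepsilon}_{s}|^{2}$ on $[r,T]$, $t\leq r\leq T$. The penalization term $-2\langle\hat{Y}^{\varepsilon}_{s},\nabla\varphi_{\varepsilon}(Y^{\varepsilon,t,\xi_{1}}_{s})-\nabla\varphi_{\varepsilon}(Y^{\varepsilon,t,\xi_{2}}_{s})\rangle\leq0$ is again dropped; one uses $|\hat{Y}^{\varepsilon}_{T}|^{2}\leq k_{2}^{2}|\hat{X}^{\varepsilon}_{T}|^{2}$ from $(H_{5})(iii)$, and splits $\hat{f}_{s}$ into three one-variable increments, bounding the $x$- and $z$-increments by $K|\hat{X}^{\varepsilon}_{s}|$ and $K|\hat{Z}^{\varepsilon}_{s}|$ via $(H_{5})(iv)$ and the $y$-increment by $\gamma|\hat{Y}^{\varepsilon}_{s}|^{2}$ via the monotonicity $(H_{5})(v)$. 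Young's inequality with $2K|\hat{Y}^{\varepsilon}_{s}||\hat{X}^{\varepsilon}_{s}|\leq KC_{3}^{-1}|\hat{Y}^{\varepsilon}_{s}|^{2}+KC_{3}|\hat{X}^{\varepsilon}_{s}|^{2}$ and $2K|\hat{Y}^{\varepsilon}_{s}||\hat{Z}^{\varepsilon}_{s}|\leq KC_{4}^{-1}|\hat{Y}^{\varepsilon}_{s}|^{2}+KC_{4}|\hat{Z}^{\varepsilon}_{s}|^{2}$ with $KC_{4}=1-\alpha$, followed by taking expectations and moving the $(1-\alpha)$-part of $E\int_{r}^{T}e^{-\lambda s}|\hat{Z}^{\varepsilon}_{s}|^{2}ds$ to the left, gives
\[
e^{-\lambda r}E|\hat{Y}^{\varepsilon}_{r}|^{2}+\alpha E{\displaystyle\int_{r}^{T}}e^{-\lambda s}|\hat{Z}^{\varepsilon}_{s}|^{2}ds\leq \Gamma-\bar{\lambda}_{2}E{\displaystyle\int_{r}^{T}}e^{-\lambda s}|\hat{Y}^{\varepsilon}_{s}|^{2}ds,
\]
where $\Gamma:=k_{2}^{2}e^{-\lambda T}E|\hat{X}^{\varepsilon}_{T}|^{2}+KC_{3}\|\hat{X}^{\varepsilon}\|^{2}_{M_{\lambda}[t,T]}$ (using $E\int_{r}^{T}e^{-\lambda s}|\hat{X}^{\varepsilon}_{s}|^{2}ds\leq\|\hat{X}^{\varepsilon}\|^{2}_{M_{\lambda}[t,T]}$ for $r\geq t$). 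Dropping the $\alpha$-term, the function $h(r):=\|\hat{Y}^{\varepsilon}\|^{2}_{M_{\lambda}[r,T]}$ satisfies $-h'(r)+\bar{\lambda}_{2}h(r)\leq\Gamma$ with $h(T)=0$; integrating $\frac{d}{dr}\!\left(e^{-\bar{\lambda}_{2}r}h(r)\right)\geq-\Gamma e^{-\bar{\lambda}_{2}r}$ over $[t,T]$ yields $h(t)\leq\Gamma B(\bar{\lambda}_{2},T-t)$, which is (\ref{Estimate 3}). Substituting this into the displayed inequality at $r=t$ gives $\alpha\|\hat{Z}^{\varepsilon}\|^{2}_{M_{\lambda}[t,T]}\leq\Gamma$ when $\bar{\lambda}_{2}\geq0$, and $\alpha\|\hat{Z}^{\varepsilon}\|^{2}_{M_{\lambda}[t,T]}\leq\Gamma\bigl(1+|\bar{\lambda}_{2}|B(\bar{\lambda}_{2},T-t)\bigr)$ when $\bar{\lambda}_{2}<0$; since in the latter case $|\bar{\lambda}_{2}|B(\bar{\lambda}_{2},T-t)=e^{|\bar{\lambda}_{2}|(T-t)}-1$, both cases amount to $\alpha\|\hat{Z}^{\varepsilon}\|^{2}_{M_{\lambda}[t,T]}\leq\Gamma\,A(\bar{\lambda}_{2},T-t)$, i.e.\ (\ref{Estimate 4}).

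The main obstacle is not conceptual but bookkeeping: one must choose the Young splittings so that precisely the advertised constants $K(C_{1}+K)$, $KC_{2}+k_{1}^{2}$, $KC_{3}$ and the exponents $\bar{\lambda}_{1}$, $\bar{\lambda}_{2}$ come out, and one must recognize the linear Gronwall-type ODE whose solution produces exactly the factors $B(\bar{\lambda}_{2},T-t)$ and $A(\bar{\lambda}_{2},T-t)$; a secondary but essential point is the justification that the stochastic integrals are genuine martingales (handled by localization using the $S^{2}\times S^{2}\times M^{2}$ a priori regularity of the penalized solutions).
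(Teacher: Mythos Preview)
Your proposal is correct and follows the same essential approach as the paper: apply It\^{o}'s formula to exponentially weighted squared differences, drop the penalization terms using the monotonicity $\langle x_{1}-x_{2},\nabla\psi_{\varepsilon}(x_{1})-\nabla\psi_{\varepsilon}(x_{2})\rangle\geq0$ and $\langle y_{1}-y_{2},\nabla\varphi_{\varepsilon}(y_{1})-\nabla\varphi_{\varepsilon}(y_{2})\rangle\geq0$, and use the Young splittings dictated by $(H_{5})$. The only technical difference is in the bookkeeping for the backward estimate: the paper (following Lemma~5.1 of \cite{MC-01}) applies It\^{o}'s formula to the doubly weighted processes $e^{-\lambda r}e^{-\lambda'(s-r)}|\hat{X}^{\varepsilon}_{r}|^{2}$ and $e^{-\lambda r}e^{-\lambda'(r-s)}|\hat{Y}^{\varepsilon}_{r}|^{2}$, choosing $\lambda'=\bar{\lambda}_{2}$ so that the $\hat{Y}$-coefficient cancels directly and the factors $B(\bar{\lambda}_{2},T-t)$ and $A(\bar{\lambda}_{2},T-t)$ fall out upon integrating or taking a supremum in $s$; you instead use the single weight $e^{-\lambda s}$ and recover the same factors via the Gronwall-type ODE $-h'(r)+\bar{\lambda}_{2}h(r)\leq\Gamma$ for $h(r)=\|\hat{Y}^{\varepsilon}\|^{2}_{M_{\lambda}[r,T]}$. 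The two devices are equivalent and yield identical constants; the double-weight trick is slightly more streamlined (no separate integration step), while your ODE argument is perhaps more transparent about where $B$ and $A$ come from.
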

\begin{proof}
We apply It\^{o}'s formula to  $\left(e^{-\lambda r}e^{-\lambda^{\prime} (s-r)}|\hat{X}^{\varepsilon}_{r}|^{2}\right)_{r\in[t,s]}$ and $\left( e^{-\lambda r}e^{-\lambda^{\prime} (r-s)}|\hat{Y}^{\varepsilon}_{r}|^{2}\right)_{r\in[s,T]}$. Thanks to the convexity of $\psi$ and $\varphi$, we have $\langle x_{1}-x_{2},\nabla\psi_{\varepsilon}(x_{1})-\nabla\psi_{\varepsilon}(x_{2})\rangle\ge0$ and $\langle y_{1}-y_{2},\nabla\varphi_{\varepsilon}(y_{1})-\nabla\varphi_{\varepsilon}(y_{2})\rangle\ge0$, which allow us to repeat the argument of Lemma 5.1 \cite{MC-01} to obtain our result.
\hfill
\end{proof}

We recall that we have introduced the notations $b^{0}(s):=b(\cdot,s,0,0,0)$, $\sigma^{0}(s):=\sigma(\cdot,s,0,0,0)$, $f^{0}(s):=f(\cdot,s,0,0,0)$, $g^{0}:=g(\cdot,0)$.
\begin{proposition}\label{Priori estimate for X and Y}
Let the assumptions $(H_{1})$-$(H_{5})$ be satisfied. We also assume $(C1)$ and either $(C2)$ or $(C3)$ hold for some $\lambda,\alpha,C_{1},C_{2},C_{3}$, and $C_{4}=\frac{1-\alpha}{K}$. Then for $t\in[0,T]$ and $\xi\in L^{2}(\Omega,\mathcal{F}_{t},\mathbb{P};\mathbb{R}^{n})$,  there exists a constant $C$ independent of $\varepsilon$ and the initial data $(t,\xi)$, such that
\begin{equation}
\qquad \|X^{\varepsilon,t,\xi}\|^{2}_{S_{\lambda}[t,T]}+\|Y^{\varepsilon,t,\xi}\|^{2}_{S_{\lambda}[t,T]}
+\|Z^{\varepsilon,t,\xi}\|^{2}_{M_{\lambda}[t,T]}
\leq C\Gamma_{1},
\end{equation}
where
\[\Gamma_{1}=e^{-\lambda t}E|\xi|^{2}+\|b^{0}\|^{2}_{M_{\lambda}[t,T]}+\|\sigma^{0}\|^{2}_{M_{\lambda}[t,T]}+\|f^{0}\|^{2}_{M_{\lambda}[t,T]}+E|g^{0}|^{2}.
\]
Moreover, if among the compatibility conditions, only $(C1)$ holds, then there exists a constant $T_{0}>0$ small enough (depending only on $K,\gamma,k_{1},k_{2}$) such that for all $t\in[0,T]$ with $T-t\leq T_{0}$, we have the above estimates.
\end{proposition}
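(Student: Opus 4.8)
The plan is to derive these bounds by exactly the argument behind Lemma~\ref{Difference of two solutions}, now comparing the single solution $(X^{\varepsilon},Y^{\varepsilon},Z^{\varepsilon})$ with the trivial solution $(0,0,0)$, which solves FBSDE~(\ref{Penalized FBSDE with two operators in appendix}) with $\xi=0$ and coefficients $b^{0},\sigma^{0},f^{0},g^{0}$. First I would apply It\^{o}'s formula to $\big(e^{-\lambda r}|X^{\varepsilon}_{r}|^{2}\big)_{r\in[t,s]}$ and to $\big(e^{-\lambda r}|Y^{\varepsilon}_{r}|^{2}\big)_{r\in[s,T]}$, splitting each coefficient as $b(r,X^{\varepsilon}_{r},Y^{\varepsilon}_{r},Z^{\varepsilon}_{r})=[\,b(r,X^{\varepsilon}_{r},Y^{\varepsilon}_{r},Z^{\varepsilon}_{r})-b^{0}(r)\,]+b^{0}(r)$ and similarly for $\sigma,f,g$. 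Thanks to $(H_{1})$--$(H_{2})$ one has $\langle x,\nabla\psi_{\varepsilon}(x)\rangle\ge\psi_{\varepsilon}(x)\ge0$ and $\langle y,\nabla\varphi_{\varepsilon}(y)\rangle\ge\varphi_{\varepsilon}(y)\ge0$ (cf. property (d) in (\ref{Property of Yosida approximation})), so the two subdifferential terms enter with a favourable sign and are simply discarded; the rest is controlled by $(H_{4})$--$(H_{5})$ together with Young's inequality with weights $C_{1},C_{2},C_{3},C_{4}$, exactly as in Lemma~5.1 of \cite{MC-01}.

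This produces the analogues of (\ref{Estimate 1}), (\ref{Estimate 3}), (\ref{Estimate 4}), namely (with $C$ depending only on $K$)
\begin{equation*}
e^{-\lambda T}E|X^{\varepsilon}_{T}|^{2}+\bar{\lambda}_{1}\|X^{\varepsilon}\|^{2}_{M_{\lambda}[t,T]}\le e^{-\lambda t}E|\xi|^{2}+K(C_{1}+K)\|Y^{\varepsilon}\|^{2}_{M_{\lambda}[t,T]}+(KC_{2}+k_{1}^{2})\|Z^{\varepsilon}\|^{2}_{M_{\lambda}[t,T]}+C\big(\|b^{0}\|^{2}_{M_{\lambda}[t,T]}+\|\sigma^{0}\|^{2}_{M_{\lambda}[t,T]}\big),
\end{equation*}
together with
\begin{equation*}
\|Y^{\varepsilon}\|^{2}_{M_{\lambda}[t,T]}\le B(\bar{\lambda}_{2},T-t)\Big[k_{2}^{2}e^{-\lambda T}E|X^{\varepsilon}_{T}|^{2}+KC_{3}\|X^{\varepsilon}\|^{2}_{M_{\lambda}[t,T]}+C\big(e^{-\lambda T}E|g^{0}|^{2}+\|f^{0}\|^{2}_{M_{\lambda}[t,T]}\big)\Big],
\end{equation*}
and the same bound for $\|Z^{\varepsilon}\|^{2}_{M_{\lambda}[t,T]}$ with $A(\bar{\lambda}_{2},T-t)/\alpha$ replacing $B(\bar{\lambda}_{2},T-t)$. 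Using $A(\bar{\lambda}_{2},T-t)\le A(\bar{\lambda}_{2},T)$ and $B(\bar{\lambda}_{2},T-t)\le B(\bar{\lambda}_{2},T)$ and substituting the $Y$ and $Z$ bounds into the first inequality, one gets, with $\mu(\alpha,T)$ as in (\ref{notations}),
\begin{equation*}
\big(1-\mu(\alpha,T)k_{2}^{2}\big)e^{-\lambda T}E|X^{\varepsilon}_{T}|^{2}+\big(\bar{\lambda}_{1}-\mu(\alpha,T)KC_{3}\big)\|X^{\varepsilon}\|^{2}_{M_{\lambda}[t,T]}\le e^{-\lambda t}E|\xi|^{2}+C\,\Gamma_{1}.
\end{equation*}

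Next I would use the compatibility conditions to ensure the two coefficients on the left are strictly positive: under (C2), $k_{2}=0$ and $\bar{\lambda}_{1}-\mu(\alpha,T)KC_{3}>0$ by hypothesis; under (C3), $\mu(\alpha,T)k_{2}^{2}<1$, hence $\mu(\alpha,T)<1/k_{2}^{2}$ and therefore $\mu(\alpha,T)KC_{3}<KC_{3}/k_{2}^{2}\le\bar{\lambda}_{1}$, which is exactly where the relaxed requirement $\bar{\lambda}_{1}\ge KC_{3}/k_{2}^{2}$ is invoked. It follows that $e^{-\lambda T}E|X^{\varepsilon}_{T}|^{2}+\|X^{\varepsilon}\|^{2}_{M_{\lambda}[t,T]}\le C\,\Gamma_{1}$, and feeding this back into the $Y$ and $Z$ inequalities yields $\|Y^{\varepsilon}\|^{2}_{M_{\lambda}[t,T]}+\|Z^{\varepsilon}\|^{2}_{M_{\lambda}[t,T]}\le C\,\Gamma_{1}$. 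To pass from the $M_{\lambda}$ norms to the $S_{\lambda}$ norms, I would apply It\^{o}'s formula once more to $e^{-\lambda s}|X^{\varepsilon}_{s}|^{2}$ on $[t,s]$ and to $e^{-\lambda s}|Y^{\varepsilon}_{s}|^{2}$ on $[s,T]$, take $\sup_{s}$ and then expectation, estimate the stochastic integrals by the Burkholder--Davis--Gundy inequality, and absorb the resulting $\sup$ terms; combined with the $M_{\lambda}$ bounds just obtained this delivers the claimed estimate. For the case in which only (C1) is assumed, the same chain applies once $T-t\le T_{0}$: since $B(\bar{\lambda}_{2},T-t)\to0$ and $A(\bar{\lambda}_{2},T-t)/\alpha$ stays bounded as $T-t\to0$, taking $\alpha$ close to $1$ and $T_{0}$ small, depending only on $K,\gamma,k_{1},k_{2}$, makes both coefficients positive.

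The It\^{o} computations and the Young/BDG bookkeeping are routine. The one genuinely delicate point is the strict positivity of the two absorbing coefficients $1-\mu(\alpha,T)k_{2}^{2}$ and $\bar{\lambda}_{1}-\mu(\alpha,T)KC_{3}$ — this is precisely the role of (C1)--(C3), and in case (C3) the proof must use $\bar{\lambda}_{1}\ge KC_{3}/k_{2}^{2}$, not merely equality, as observed in the remark following (\ref{notations}). A second point to watch is that $C$ be independent of $\varepsilon$ and of $(t,\xi)$: the former holds because the only $\varepsilon$-dependent quantities, $\langle x,\nabla\psi_{\varepsilon}(x)\rangle$ and $\langle y,\nabla\varphi_{\varepsilon}(y)\rangle$, are nonnegative and thrown away, so no factor $1/\varepsilon$ ever enters; the latter because every constant produced is built only from $K,k_{1},k_{2},\gamma,\lambda,\alpha,C_{1},\dots,C_{4}$ and $T$.
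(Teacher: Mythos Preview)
Your approach is essentially the paper's, but there is one technical slip you should fix. In the difference setting of Lemma~\ref{Difference of two solutions} the zero-order terms cancel, so the constants $\bar\lambda_1$, $K(C_1+K)$, $KC_2+k_1^2$, $k_2^2$, $KC_3$ appear \emph{exactly}. When you compare with $(0,0,0)$, the residual terms $b^0,\sigma^0,f^0,g^0$ have to be split off via Young's inequality, e.g.\ $2\langle X,b^0\rangle\le\delta|X|^2+\delta^{-1}|b^0|^2$ and $|\sigma|^2\le(1+\delta)|\sigma-\sigma^0|^2+(1+\delta^{-1})|\sigma^0|^2$. These $\delta$-terms land on the \emph{left} side (and on the structural coefficients on the right), so your first displayed inequality cannot hold with $\bar\lambda_1$, $K(C_1+K)$, $KC_2+k_1^2$ exactly: one gets $\bar\lambda_1^\delta:=\bar\lambda_1-(1+K^2)\delta$ and $\mu^\delta(\alpha,T)$ in place of $\mu(\alpha,T)$. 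This is precisely what the paper writes in (\ref{Estimate 5})--(\ref{Estimate 8}) and (\ref{auxiliary inequality}). The closure of the argument then needs one more line: since the inequalities in $(C2)$ and $(C3)$ are \emph{strict} (for $(C3)$: $\mu(\alpha,T)k_2^2<1$ gives $\mu(\alpha,T)KC_3<KC_3/k_2^2\le\bar\lambda_1$, strictly), continuity in $\delta$ guarantees that $1-\mu^\delta(\alpha,T)k_2^2(1+\delta)>0$ and $\bar\lambda_1^\delta-\mu^\delta(\alpha,T)KC_3>0$ for $\delta$ small enough, independent of $\varepsilon$ and $(t,\xi)$.

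For the small-time case your heuristic is right but the parameter choices need care: ``$\alpha$ close to $1$'' is not the mechanism (note $C_4=(1-\alpha)/K\to0$ would send $\bar\lambda_2\to-\infty$). The paper instead fixes $\alpha\in(k_1^2k_2^2,1)$, takes $C_2$ small so that $\mu(\alpha,0)=(KC_2+k_1^2)/\alpha<1/k_2^2$ (when $k_2>0$), chooses $\lambda$ large so that $\bar\lambda_1\ge KC_3/k_2^2$, and then uses continuity of $s\mapsto\mu(\alpha,s)$ at $s=0$ to find $T_0$; the case $k_2=0$ is handled analogously. With these two adjustments your proof matches the paper's.
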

\begin{proof}
Similarly to Lemma 5.2 \cite{MC-01}, using $A(\lambda,T-t)\leq A(\lambda,T)$, $B(\lambda,T-t)\leq B(\lambda,T)$, $\langle x_{1}-x_{2},\nabla\psi_{\varepsilon}(x_{1})-\nabla\psi_{\varepsilon}(x_{2})\rangle\ge0$ and $\langle y_{1}-y_{2},\nabla\varphi_{\varepsilon}(y_{1})-\nabla\varphi_{\varepsilon}(y_{2})\rangle\ge0$, we get,
for arbitrary $\lambda\in\mathbb{R}$, $\delta>0$ and positive constants $C_{1},C_{2},C_{3},C_{4}$
\begin{equation}\label{Estimate 5}
\begin{array}
[c]{l}
e^{-\lambda T}E|X_{T}^{\varepsilon,t,\xi}|^{2}+\bar{\lambda}_{1}^{\delta}\|X^{\varepsilon,t,\xi}\|^{2}_{M_{\lambda}[t,T]}\leq K[C_{1}+K(1+\delta)]\|Y^{\varepsilon,t,\xi}\|^{2}_{M_{\lambda}[t,T]}\medskip\\
\quad+[KC_{2}+k_{1}^{2}(1+\delta)]\|\hat{Z}^{\varepsilon,t,\xi}\|^{2}_{M_{\lambda}[t,T]}
+e^{-\lambda t}E|\xi|^{2}+\frac{1}{\delta}\|b^{0}\|_{M_{\lambda}[t,T]}+(1+\frac{1}{\delta})\|\sigma^{0}\|_{M_{\lambda}[t,T]}.
\end{array}
\end{equation}
Furthermore, if in addition, $KC_{4}=1-\alpha$, for some $0<\alpha<1$, then
\begin{equation}\label{Estimate 7}
\begin{array}
[c]{l}
\|Y^{\varepsilon,t,\xi}\|^{2}_{M_{\lambda}[t,T]}\leq B(\bar{\lambda}_{2}^{\delta},T)\Big\{
k_{2}^{2}(1+\delta)e^{-\lambda T}E|X_{T}^{\varepsilon,t,\xi}|^{2}+KC_{3}\|X^{\varepsilon,t,\xi}\|^{2}_{M_{\lambda}[t,T]}\medskip\\
\qquad\qquad\qquad\qquad\qquad\qquad\qquad
\frac{1}{\delta}\|f^{0}\|_{M_{\lambda}[t,T]}+(1+\frac{1}{\delta})e^{-\lambda T}E|g^{0}|^{2}\Big\},
\end{array}
\end{equation}
\begin{equation}\label{Estimate 8}
\begin{array}
[c]{l}%
\|Z^{\varepsilon,t,\xi}\|^{2}_{M_{\lambda}[t,T]}\leq \frac{A(\bar{\lambda}_{2}^{\delta},T)}{\alpha}\Big\{
k_{2}^{2}(1+\delta)e^{-\lambda T}E|X_{T}^{\varepsilon,t,\xi}|^{2}+KC_{3}\|X^{\varepsilon,t,\xi}\|^{2}_{M_{\lambda}[t,T]}\medskip\\
\qquad\qquad\qquad\qquad\qquad\qquad\qquad
\frac{1}{\delta}\|f^{0}\|_{M_{\lambda}[t,T]}+(1+\frac{1}{\delta})e^{-\lambda T}E|g^{0}|^{2}\Big\},
\end{array}
\end{equation}
where $\bar{\lambda}_{1}^{\delta}=\bar{\lambda}_{1}-(1+K^{2})\delta$ and $\bar{\lambda}_{2}^{\delta}=\bar{\lambda}_{2}-\delta$.
Now we define
\[\mu^{\delta}(\alpha,T):=K[C_{1}+K(1+\delta)]B(\bar{\lambda}_{2}^{\delta},T)
+\frac{A(\bar{\lambda}_{2}^{\delta},T)}{\alpha}[KC_{2}+k_{1}^{2}(1+\delta)].
\]
Then plugging (\ref{Estimate 7}) and (\ref{Estimate 8}) into (\ref{Estimate 5}) yields
\begin{equation}\label{auxiliary inequality}
\begin{array}
[c]{l}%
\qquad(1-\mu^{\delta}(\alpha,T)k_{2}^{2}(1+\delta))e^{-\lambda T}E|X_{T}^{\varepsilon,t,\xi}|^{2}+(\bar{\lambda}_{1}^{\delta}
-\mu^{\delta}(\alpha,T)KC_{3})\|X^{\varepsilon,t,\xi}\|^{2}_{M_{\lambda}[t,T]}\medskip\\
\leq e^{-\lambda t}E|\xi|^{2}
+\frac{1}{\delta}\|b^{0}\|_{M_{\lambda}[t,T]}+(1+\frac{1}{\delta})\|\sigma^{0}\|_{M_{\lambda}[t,T]}\medskip\\
\qquad\qquad\qquad\qquad\qquad\qquad
+\mu^{\delta}(\alpha,T)\Big\{\frac{1}{\delta}\|f^{0}\|_{M_{\lambda}[t,T]}+(1+\frac{1}{\delta})e^{-\lambda T}E|g^{0}|^{2}\Big\}.
\end{array}
\end{equation}
Observe that $\bar{\lambda}_{1}^{\delta}\rightarrow\bar{\lambda}_{1}$ and $\mu^{\delta}(\alpha,T)\rightarrow\mu(\alpha,T)$ as $\delta\rightarrow 0$ (Recall the definition of $\mu(\alpha,T)$ in (\ref{notations})). On the other hand, due to the assumption $(C2)$, there exists $\alpha\in(0,1)$  s.t. $\mu(\alpha,T)KC_{3}<\bar{\lambda}_{1}$. Consequently, we can choose a sufficiently small $\delta>0$ which is independent of $\varepsilon$ and $t$, such that $\bar{\lambda}_{1}^{\delta}-\mu^{\delta}(\alpha,T)KC_{3}>0$. Then we obtain for our fixed $\delta>0$ from (\ref{auxiliary inequality}),
\[\|X^{\varepsilon,t,\xi}\|^{2}_{M_{\lambda}[t,T]}\leq C\Gamma_{1},
\]
where
\[
\begin{array}
[c]{l}
C=
\frac{(1+\frac{1}{\delta})(1+\mu^{\delta}(\alpha,T))}{\bar{\lambda}_{1}^{\delta}-\mu^{\delta}(\alpha,T)KC_{3}},
\end{array}
\]
and
\[\Gamma_{1}=e^{-\lambda t}E|\xi|^{2}+\|b^{0}\|^{2}_{M_{\lambda}[t,T]}+\|\sigma^{0}\|^{2}_{M_{\lambda}[t,T]}+\|f^{0}\|^{2}_{M_{\lambda}[t,T]}+E|g^{0}|^{2}.
\]

Similarly, under the assumptions $(C1)$ and $(C3)$,  we know that there exists $\alpha\in(k^{2}_{1}k^{2}_{2},1)$, such that $\mu(\alpha,T)k_{2}^{2}<1 \text{ and } \bar{\lambda}_{1}\ge\frac{KC_{3}}{k_{2}^{2}}$. Hence, we can choose $\delta>0$ such that $1-\mu^{\delta}(\alpha,T)k_{2}^{2}(1+\delta)>0$ and $\bar{\lambda}_{1}^{\delta}-\mu^{\delta}(\alpha,T)KC_{3}\ge\bar{\lambda}_{1}^{\delta}-\bar{\lambda}_{1}\mu^{\delta}(\alpha,T)k_{2}^{2}>0$.
Then  from
(\ref{auxiliary inequality}) it follows
\[e^{-\lambda T}E|X_{T}^{\varepsilon,t,\xi}|+\|X^{\varepsilon,t,\xi}\|^{2}_{M_{\lambda}[t,T]}\leq C\Gamma_{1},
\]
where
\[
\begin{array}
[c]{l}
C=\max\Bigg\{\frac{(1+\frac{1}{\delta})(1+\mu^{\delta}(\alpha,T))}{1-\mu^{\delta}(\alpha,T)k_{2}^{2}(1+\delta)},
\frac{(1+\frac{1}{\delta})(1+\mu^{\delta}(\alpha,T))}{\bar{\lambda}_{1}^{\delta}-\mu^{\delta}(\alpha,T)KC_{3}}
\Bigg\}.
\end{array}
\]
Using (\ref{Estimate 7}) and (\ref{Estimate 8}), the above estimates for $X^{\varepsilon,t,\xi}$ allows to deduce that
\[\|X^{\varepsilon,t,\xi}\|^{2}_{M_{\lambda}[t,T]}
+\|Y^{\varepsilon,t,\xi}\|^{2}_{M_{\lambda}[t,T]}
+\|Z^{\varepsilon,t,\xi}\|^{2}_{M_{\lambda}[t,T]}\leq C\Gamma_{1}.
\]
Moreover, applying It\^{o}'s formula to $|X_{r}^{\varepsilon,t,\xi}|^{2}$ and $|Y_{r}^{\varepsilon,t,\xi}|^{2}$, and using (\ref{Property of Yosida approximation}-d)), the BDG and the H\"{o}lder inequality,  we obtain
$\|X^{\varepsilon,t,\xi}\|^{2}_{S_{\lambda}[t,T]}+\|Y^{\varepsilon,t,\xi}\|^{2}_{S_{\lambda}[t,T]}\leq C\Gamma_{1}$.

Finally, if only $(C1)$ holds, similar to the argument of \cite{MC-01}, if $k_{2}=0$, for fixed $C_{1},C_{2},C_{3}$ and $\alpha\in(0,1)$, we have $\mu(\alpha,0)KC_{3}=\frac{(KC_{2}+k_{1}^{2})KC_{3}}{\alpha}\leq \bar{\lambda}_{1}$, for $\lambda$ big enough. Then by the continuity of $\mu(\alpha,\cdot)$, we can find a $T_{0}>0$ small enough and only depending on $K,\gamma,k_{1},k_{2}$ such that $\mu(\alpha,T-t)KC_{3}\leq\bar{\lambda}_{1}$ for $T-t\leq T_{0}$. If $k_{2}>0$, we choose $\alpha\in(k^{2}_{1}k^{2}_{2},1)$,
 $C_{2}=\frac{\alpha-k^{2}_{1}k^{2}_{2}}{4Kk_{2}^{2}}$ and $C_{4}=\frac{1-\alpha}{K}$. Then we have
 $\mu(\alpha,0)=\frac{KC_{2}+k_{1}^{2}}{\alpha}<\frac{1}{k_{2}^{2}}$ and $\bar{\lambda}_{1}\ge\frac{KC_{3}}{k^{2}_{2}}$, for $\lambda$ big enough.
 Consequently, there exists a $T_{0}>0$ small enough (only depending on $K,\gamma,k_{1},k_{2}$) such that $\mu(\alpha,T-t)<\frac{1}{k_{2}^{2}}$ and $\bar{\lambda}_{1}\ge\frac{KC_{3}}{k^{2}_{2}}$, for $T-t\leq T_{0}$. Reproducing the above argument, we obtain the same estimates when $(C1)$ holds and $T-t\leq T_{0}$, for some $T_{0}$ small enough.
\hfill
\end{proof}

Now we give the solvability result for FBSDE (\ref{Penalized FBSDE with two operators in appendix}).
\begin{theorem}\label{Solvability of Penalized FBSDE in appendix}
Let the assumptions $(H_{1})$-$(H_{5})$ be satisfied. We also assume $(C1)$ and either $(C2)$ or $(C3)$ hold for some $\lambda,\alpha,C_{1},C_{2},C_{3}$ and $C_{4}=\frac{1-\alpha}{K}$, Then the penalized FBSDE (\ref{Penalized FBSDE with two operators in appendix}) has a unique adapted solution $(X^{\varepsilon},Y^{\varepsilon},Z^{\varepsilon})\in S^{2}_{n}\times S^{2}_{m}\times M^{2}_{m\times d}$. Moreover, if among the compatibility conditions, only $(C1)$ holds, then there exists a constant $T_{0}>0$ small enough (depending on $K,\gamma,k_{1},k_{2}$) such that for all $t\in[0,T]$ with $T-t\leq T_{0}$, the penalized  FBSDE (\ref{Penalized FBSDE with two operators in appendix}) has a unique adapted solution on $[t,T]$.
\end{theorem}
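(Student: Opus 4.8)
The plan is to follow the contraction-mapping scheme of Pardoux and Tang \cite{PT-99}, as adapted to reflected problems by Cvitani\'{c} and Ma \cite{MC-01}, the only genuinely new point being the presence of the Yosida approximants $\nabla\psi_{\varepsilon}$ and $\nabla\varphi_{\varepsilon}$. The first observation is that, although these gradients are Lipschitz with the large constant $1/\varepsilon$, they will never be used through their Lipschitz property: the convexity of $\psi$ and $\varphi$ gives $\langle x_{1}-x_{2},\nabla\psi_{\varepsilon}(x_{1})-\nabla\psi_{\varepsilon}(x_{2})\rangle\ge 0$ and $\langle y_{1}-y_{2},\nabla\varphi_{\varepsilon}(y_{1})-\nabla\varphi_{\varepsilon}(y_{2})\rangle\ge 0$, so in every It\^{o} computation these terms may be discarded with the favourable sign. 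This is precisely why all constants below, including $T_{0}$, will depend only on $K,\gamma,k_{1},k_{2}$ and not on $\varepsilon$.

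Concretely, I would set up the solution map on the Banach space obtained by completing the continuous progressively measurable pairs $(Y,Z)$ under the norm $\|Y\|_{\lambda,\beta,[t,T]}+\|Z\|_{M_{\lambda}[t,T]}$, with $\lambda,\alpha,C_{1},\dots,C_{4}$ fixed exactly as in the compatibility hypotheses. Given $(Y,Z)$ in this space, one first solves the decoupled forward equation
\[
X_{s}+\int_{t}^{s}\nabla\psi_{\varepsilon}(X_{r})\,dr=\xi+\int_{t}^{s}b(r,X_{r},Y_{r},Z_{r})\,dr+\int_{t}^{s}\sigma(r,X_{r},Y_{r},Z_{r})\,dB_{r},
\]
a standard Lipschitz SDE with an additional monotone drift, whose unique solvability is classical (see also \cite{AR-97}); then, with this $X$ frozen, one solves the decoupled backward equation
\[
\tilde{Y}_{s}+\int_{s}^{T}\nabla\varphi_{\varepsilon}(\tilde{Y}_{r})\,dr=g(X_{T})+\int_{s}^{T}f(r,X_{r},\tilde{Y}_{r},\tilde{Z}_{r})\,dr-\int_{s}^{T}\tilde{Z}_{r}\,dB_{r},
\]
a Lipschitz BSDE whose generator is monotone in $y$ by assumption $(H_{5})$, hence uniquely solvable by Pardoux--Peng theory. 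The map $\Gamma:(Y,Z)\mapsto(\tilde{Y},\tilde{Z})$ is well defined, and a fixed point of $\Gamma$ is exactly an adapted solution of (\ref{Penalized FBSDE with two operators in appendix}).

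To show $\Gamma$ is a contraction I would reproduce the It\^{o} estimates behind Lemma \ref{Difference of two solutions} and Proposition \ref{Priori estimate for X and Y}, applied now to the difference of two inputs $(Y^{1},Z^{1}),(Y^{2},Z^{2})$ with the same $\xi$: applying It\^{o}'s formula to $e^{-\lambda r}|X^{1}_{r}-X^{2}_{r}|^{2}$ on $[t,s]$ and to $e^{-\lambda r}|\tilde{Y}^{1}_{r}-\tilde{Y}^{2}_{r}|^{2}$ on $[s,T]$, and dropping the monotone Yosida terms, one obtains inequalities of the shape (\ref{Estimate 1})--(\ref{Estimate 4}) with $\hat{\xi}=0$ and with $Y,Z$ (resp. $\tilde{Y},\tilde{Z}$) playing the role of source (resp. output). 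Chaining them and invoking the definition of $\mu(\alpha,T)$ together with $(C1)$ and $(C2)$ (resp. $(C1)$ and $(C3)$) yields $\|\Gamma(Y^{1},Z^{1})-\Gamma(Y^{2},Z^{2})\|\le\kappa\,\|(Y^{1},Z^{1})-(Y^{2},Z^{2})\|$ with $\kappa<1$ on the whole interval $[t,T]$, so Banach's fixed point theorem gives the unique solution, whose membership in $S^{2}_{n}\times S^{2}_{m}\times M^{2}_{m\times d}$ follows from one further application of the BDG inequality as in Proposition \ref{Priori estimate for X and Y}. When only $(C1)$ holds, the quantities $\mu(\alpha,T-t)$, $A(\bar\lambda_{2},T-t)$, $B(\bar\lambda_{2},T-t)$ are continuous in $T-t$ and small for $T-t$ small, so the same chaining produces $\kappa<1$ provided $T-t\le T_{0}$ with $T_{0}$ depending only on $K,\gamma,k_{1},k_{2}$, which is the small-time statement. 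Uniqueness in all cases is the special case $\xi_{1}=\xi_{2}$ of Lemma \ref{Difference of two solutions}: the combined estimate then forces $\hat{X}\equiv\hat{Y}\equiv\hat{Z}\equiv 0$.

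The main obstacle is not any individual estimate but the bookkeeping that makes $\kappa$ strictly less than one on an interval of fixed length; this is exactly the purpose of the compatibility conditions. The delicate point is to control simultaneously the factor $\mu(\alpha,T)=K(C_{1}+K)B(\bar\lambda_{2},T)+\alpha^{-1}A(\bar\lambda_{2},T)(KC_{2}+k_{1}^{2})$ multiplying $k_{2}^{2}$ and the gap $\bar\lambda_{1}-\mu(\alpha,T)KC_{3}$, which is what forces the slightly relaxed requirement $\bar\lambda_{1}\ge KC_{3}/k_{2}^{2}$ in $(C3)$ noted in the remark following (\ref{notations}); with this relaxation the argument of \cite{MC-01} goes through unchanged.
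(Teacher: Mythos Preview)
Your argument is correct and identifies the key point exactly: the monotonicity of $\nabla\psi_{\varepsilon}$ and $\nabla\varphi_{\varepsilon}$ lets one drop these terms in every It\^{o} estimate, so the contraction constant is governed only by $K,\gamma,k_{1},k_{2}$ and the compatibility conditions, independently of $\varepsilon$.

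The one genuine difference with the paper is the choice of fixed-point variable. The paper iterates on the forward component: it defines $\Lambda:H[t,T]\to H[t,T]$ by first solving the BSDE with input $X^{\varepsilon}$ to produce $(Y^{\varepsilon},Z^{\varepsilon})$, and then solving the SDE with this pair to produce $\bar X^{\varepsilon}=\Lambda(X^{\varepsilon})$. Chaining (\ref{Estimate 3})--(\ref{Estimate 4}) into (\ref{Estimate 1}) gives directly the inequality
\[
e^{-\lambda T}E|\Delta\bar X^{\varepsilon}_{T}|^{2}+\bar\lambda_{1}\|\Delta\bar X^{\varepsilon}\|_{M_{\lambda}[t,T]}^{2}\le \mu(\alpha,T-t)\bigl[k_{2}^{2}e^{-\lambda T}E|\Delta X^{\varepsilon}_{T}|^{2}+KC_{3}\|\Delta X^{\varepsilon}\|_{M_{\lambda}[t,T]}^{2}\bigr],
\]
from which the contraction on $(M^{2}_{n}[t,T],\|\cdot\|_{M_{\lambda}})$ under $(C2)$, respectively on $(\bar H[t,T],\|\cdot\|_{\lambda,\bar\lambda_{1},[t,T]})$ under $(C3)$, can be read off with a single line of algebra. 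Your dual iteration $\Gamma$ on the $(Y,Z)$-space is equally valid---the same estimates chain in the reverse order---but the norm that makes $\Gamma$ a strict contraction is the weighted combination $K(C_{1}+K)\|Y\|_{M_{\lambda}}^{2}+(KC_{2}+k_{1}^{2})\|Z\|_{M_{\lambda}}^{2}$ rather than the norm $\|Y\|_{\lambda,\beta,[t,T]}+\|Z\|_{M_{\lambda}[t,T]}$ you wrote down; the terminal piece $e^{-\lambda T}E|Y_{T}|^{2}$ of the input does not naturally appear on the right-hand side of the chained estimate. This is a cosmetic adjustment and does not affect the substance of your argument. The paper's choice has the advantage that the contraction norm coincides with the norm $\|\cdot\|_{\lambda,\beta,[t,T]}$ already introduced, and that it follows \cite{MC-01} verbatim, which keeps the exposition short.
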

\begin{proof}
We introduce a mapping $\Lambda:H\rightarrow H$ (Recalling the definition of $H=H[0,T]$) such that $\bar{X}_{s}^{\varepsilon}:=\Lambda( X^{\varepsilon})_{s}$ ($X^{\varepsilon}\in H$) is the unique solution of the SDE:
\[
\bar{X}_{s}^{\varepsilon}+{\displaystyle\int_{t}^{s}}\nabla\psi_{\varepsilon}(\bar{X}_{r}^{\varepsilon})dr
=\xi+{\displaystyle\int_{t}^{s}}b(r,\bar{X}_{r}^{\varepsilon},Y_{r}^{\varepsilon},Z_{r}^{\varepsilon})dr
+{\displaystyle\int_{t}^{s}}\sigma(r,\bar{X}_{r}^{\varepsilon},Y_{r}^{\varepsilon},Z_{r}^{\varepsilon})dB_{r},
\]
where $(Y^{\varepsilon}, Z^{\varepsilon})$ is the unique solution of the following BSDE
\[
Y_{s}^{\varepsilon}+{\displaystyle\int_{s}^{T}}\nabla\varphi_{\varepsilon}(Y_{r}^{\varepsilon})dr=g(X_{T}^{\varepsilon})+{\displaystyle\int
_{s}^{T}}f(r,X_{r}^{\varepsilon},Y_{r}^{\varepsilon},Z_{r}^{\varepsilon})dr-{\displaystyle\int_{s}^{T}}Z_{r}^{\varepsilon}dB_{r}.
\]

Given arbitrary $X^{\varepsilon}, \tilde{X}^{\varepsilon}\in H$, we put $\Delta X^{\varepsilon}=X^{\varepsilon}-\tilde{X}^{\varepsilon}$ and
$\Delta \bar{X}^{\varepsilon}=\Lambda(X^{\varepsilon})-\Lambda(\tilde{X}^{\varepsilon})$.
The estimates (\ref{Estimate 1})-(\ref{Estimate 4}) yield
\begin{equation}\label{contracting mapping}
\begin{array}
[c]{ll}%
\qquad e^{-\lambda T}E|\Delta \bar{X}_{T}^{\varepsilon}|^{2}+\bar{\lambda}_{1}\|\Delta \bar{X}^{\varepsilon}\|^{2}_{M_{\lambda}[t,T]}\medskip\\
\leq
\mu(\alpha,T-t)\left\{k_{2}^{2}e^{-\lambda T}E|\Delta X_{T}^{\varepsilon}|^{2}+ KC_{3}\|\Delta X^{\varepsilon}\|^{2}_{M_{\lambda}[t,T]}\right\}.
\end{array}
\end{equation}
With a similar discussion as in Theorem 3.1  \cite{MC-01} we can prove $\Lambda$ is a contracting mapping on $(M_{n}^{2}[t,T],\|\cdot\|_{M_{\lambda}[t,T]})$, if $(C1)$ and $(C2)$ hold, and $\Lambda$ is a contracting mapping on $(\bar{H}[t,T],\|\cdot\|_{\lambda,\beta,[t,T]})$, if $(C1)$ and $(C3)$ hold. Moreover, if only $(C1)$ holds and $k_{2}=0$ (resp. $k_{2}>0$), $\Lambda$ is a contracting mapping on $(M_{n}^{2}[t,T],\|\cdot\|_{M_{\lambda}[t,T]})$ (resp. $(\bar{H}[t,T],\|\cdot\|_{\lambda,\beta,[t,T]})$), for all $t\in[0,T]$ with $T-t\leq T_{0}$, $T_{0}>0$ small enough.\hfill
\end{proof}

Similar to the proof Theorem A.5 \cite{D-02}, using $\langle x_{1}-x_{2},\nabla\psi_{\varepsilon}(x_{1})-\nabla\psi_{\varepsilon}(x_{2})\rangle\ge0$ and $\langle y_{1}-y_{2},\nabla\varphi_{\varepsilon}(y_{1})-\nabla\varphi_{\varepsilon}(y_{2})\rangle\ge0$, we have the following proposition:
\begin{proposition}\label{Delarue}
Suppose $(X^{\varepsilon},Y^{\varepsilon},Z^{\varepsilon})$ (resp. $(\tilde{X}^{\varepsilon},\tilde{Y}^{\varepsilon},\tilde{Z}^{\varepsilon})$) is a solution of FBSDE (\ref{Penalized FBSDE with two operators in appendix}) with parameters $(\xi,b,\sigma,f,g)$ (resp. $(\tilde{\xi},\tilde{b},\tilde{\sigma},\tilde{f},\tilde{g})$) which satisfy $(H_{1})$-$(H_{5})$ and $(H^{\prime}_{2}),(H^{\prime}_{3})$ and $\xi,\bar{\xi}\in L^{2}(\Omega,\mathcal{F}_{t},\mathbb{P};\mathbb{R}^{n})$.
Let $1\leq p\leq \frac{3+\rho_{0}}{2}$, such that
\[
E\left\{\sup\limits_{t\leq s\leq T}|X_{s}^{\varepsilon}|^{2p}+\sup\limits_{t\leq t\leq T}|Y_{s}^{\varepsilon}|^{2p}+\left({\displaystyle\int_{t}^{T}}|Z_{s}^{\varepsilon}|^{2}ds\right)^{p}\right\}<\infty,
\]
and
\[
E\left\{\sup\limits_{t\leq s\leq T}|\tilde{X}_{s}^{\varepsilon}|^{2p}+\sup\limits_{t\leq s\leq T}|\tilde{Y}_{s}^{\varepsilon}|^{2p}+\left({\displaystyle\int_{t}^{T}}|\tilde{Z}_{s}^{\varepsilon}|^{2}ds\right)^{p}\right\}<\infty.
\]
Then there exist constants $\delta_{K,k_{2},\gamma,p}>0$ and $C_{K,k_{2},\gamma,p}$ depending only on $K,k_{2},\gamma,p$, such that, for all $t\in[0,T]$ with $T-t\leq \delta_{K,k_{2},\gamma,p}$,
\begin{equation}\label{difference in small interval}
\begin{array}
[c]{l}%
\qquad E\left\{\sup\limits_{t\leq s\leq T}|\tilde{X}_{s}^{\varepsilon}-X_{s}^{\varepsilon}|^{2p}
+\sup\limits_{t\leq s\leq T}|\tilde{Y}_{s}^{\varepsilon}-Y_{s}^{\varepsilon}|^{2p}
+\left({\displaystyle\int_{t}^{T}}|\tilde{Z}_{s}^{\varepsilon}-Z_{s}^{\varepsilon}|^{2}ds\right)^{p}\right\}\medskip\\
\leq C_{K,k_{2},\gamma,p}E\Bigg\{|\tilde{\xi}-\xi|^{2p}+|\tilde{g}-g|^{2p}(X_{T}^{\varepsilon})
+\left({\displaystyle\int_{t}^{T}}|\tilde{\sigma}-\sigma|^{2}(s,X_{s}^{\varepsilon},Y_{s}^{\varepsilon})ds\right)^{p}\medskip\\
\qquad\qquad\qquad\qquad+\left({\displaystyle\int_{t}^{T}}(|\tilde{b}-b|+|\tilde{f}-f|)
(s,X_{s}^{\varepsilon},Y_{s}^{\varepsilon},Z_{s}^{\varepsilon})ds\right)^{2p}\Bigg\}.
\end{array}
\end{equation}
\end{proposition}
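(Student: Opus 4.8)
The plan is to follow the proof of Theorem A.5 in \cite{D-02}, the only structural novelty being that the penalization drifts are controlled through monotonicity rather than through their Lipschitz constants, which keeps all constants independent of $\varepsilon$. Put $\hat\xi=\tilde\xi-\xi$, $\hat X=\tilde X^{\varepsilon}-X^{\varepsilon}$, $\hat Y=\tilde Y^{\varepsilon}-Y^{\varepsilon}$, $\hat Z=\tilde Z^{\varepsilon}-Z^{\varepsilon}$, and write $\Delta b_s:=\tilde b(s,\tilde X^{\varepsilon}_s,\tilde Y^{\varepsilon}_s,\tilde Z^{\varepsilon}_s)-b(s,X^{\varepsilon}_s,Y^{\varepsilon}_s,Z^{\varepsilon}_s)$, and analogously $\Delta f_s$, $\Delta\sigma_s$ (under $(H'_3)$ the latter does not involve $\hat Z$). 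Then
\[
\left\{
\begin{array}{l}
d\hat X_s=-\big(\nabla\psi_\varepsilon(\tilde X^{\varepsilon}_s)-\nabla\psi_\varepsilon(X^{\varepsilon}_s)\big)ds+\Delta b_s\,ds+\Delta\sigma_s\,dB_s,\medskip\\
d\hat Y_s=\big(\nabla\varphi_\varepsilon(\tilde Y^{\varepsilon}_s)-\nabla\varphi_\varepsilon(Y^{\varepsilon}_s)\big)ds-\Delta f_s\,ds+\hat Z_s\,dB_s,\qquad \hat Y_T=\tilde g(\tilde X^{\varepsilon}_T)-g(X^{\varepsilon}_T).
\end{array}
\right.
\]

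First I would establish the forward estimate. Applying It\^o's formula to $|\hat X_s|^{2p}$ on $[t,s]$, the drift contributed by the penalization is $-2p\,|\hat X_r|^{2p-2}\langle\hat X_r,\nabla\psi_\varepsilon(\tilde X^{\varepsilon}_r)-\nabla\psi_\varepsilon(X^{\varepsilon}_r)\rangle\le 0$ by monotonicity of $\nabla\psi_\varepsilon$, hence it may be discarded; splitting $\Delta b$ and $\Delta\sigma$ into a Lipschitz part (in $\hat X,\hat Y,\hat Z$ for $\Delta b$, with no $\hat Z$ in $\Delta\sigma$ thanks to $(H'_3)$) and a data part evaluated along $(X^{\varepsilon},Y^{\varepsilon},Z^{\varepsilon})$, then estimating the drift integral through Cauchy--Schwarz in time (so that the $\hat Z$--contribution appears with a factor $(T-t)^{p}$) and the stochastic integral through the Burkholder--Davis--Gundy inequality, one obtains, with a constant $C=C_{K,p}$ and a function $\theta$ with $\theta(\tau)\downarrow 0$ as $\tau\downarrow 0$,
\[
E\sup_{t\le s\le T}|\hat X_s|^{2p}\le C\,E|\hat\xi|^{2p}+C\,\mathcal R+\theta(T-t)\Big(E\sup_{t\le s\le T}|\hat Y_s|^{2p}+E\big(\textstyle\int_t^T|\hat Z_r|^2\,dr\big)^{p}\Big),
\]
where $\mathcal R$ denotes the right-hand side data term of (\ref{difference in small interval}) without the $\hat\xi$ and $\hat g$ pieces.

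Next I would establish the backward estimate. Applying It\^o to $|\hat Y_s|^{2p}$ on $[s,T]$, the penalization drift $2p\,|\hat Y_r|^{2p-2}\langle\hat Y_r,\nabla\varphi_\varepsilon(\tilde Y^{\varepsilon}_r)-\nabla\varphi_\varepsilon(Y^{\varepsilon}_r)\rangle\ge 0$ again has the favorable sign and is dropped; bounding the terminal value by $|\tilde g-g|^{2p}(X^{\varepsilon}_T)+k_2^{2p}|\hat X_T|^{2p}$, using $(H_5)(iv)$ for the $(x,z)$--Lipschitz part of $\Delta f$ and $(H_5)(v)$ for the $y$--part (whose only effect is the term $2p\gamma\int|\hat Y_r|^{2p}dr$, absorbed on a short interval or removed by an $e^{2p\gamma r}$ weight), extracting $\int_t^T|\hat Z_r|^2dr$ from a separate It\^o expansion of $|\hat Y_s|^2$, and applying BDG, one gets
\[
E\sup_{t\le s\le T}|\hat Y_s|^{2p}+E\big(\textstyle\int_t^T|\hat Z_r|^2\,dr\big)^{p}\le C\,E|\tilde g-g|^{2p}(X^{\varepsilon}_T)+C\,\mathcal R+C\,E|\hat X_T|^{2p}+\theta(T-t)\,E\sup_{t\le s\le T}|\hat X_s|^{2p}.
\]

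Finally I would close the loop. Substituting the forward bound into the backward one and using $E|\hat X_T|^{2p}\le E\sup_{t\le s\le T}|\hat X_s|^{2p}$, every occurrence of $E\sup|\hat X_s|^{2p}$ on the right is replaced by $C\,E|\hat\xi|^{2p}+C\,\mathcal R$ plus terms carrying the small factor $\theta(T-t)$; choosing $\delta_{K,k_2,\gamma,p}>0$ so that $\theta(\tau)$ and all accompanying coefficients are below $\frac12$ for $\tau\le\delta_{K,k_2,\gamma,p}$ lets one absorb $E\sup|\hat Y_s|^{2p}$ and $E(\int_t^T|\hat Z_r|^2dr)^{p}$ into the left-hand side, and then re-inserting this into the forward bound yields (\ref{difference in small interval}) with a constant $C_{K,k_2,\gamma,p}$. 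The one point that genuinely departs from \cite{D-02} — and the place deserving care — is to check that throughout this argument the penalized operators enter solely via $\langle x_1-x_2,\nabla\psi_\varepsilon(x_1)-\nabla\psi_\varepsilon(x_2)\rangle\ge0$ and $\langle y_1-y_2,\nabla\varphi_\varepsilon(y_1)-\nabla\varphi_\varepsilon(y_2)\rangle\ge0$, never through the Lipschitz constant $1/\varepsilon$, so that $\delta_{K,k_2,\gamma,p}$ and $C_{K,k_2,\gamma,p}$ depend only on $K,k_2,\gamma,p$; the standing $2p$--moment hypotheses on the two solutions ensure all the It\^o expansions and BDG estimates used above are legitimate.
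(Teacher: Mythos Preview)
Your proposal is correct and matches the paper's approach exactly: the paper provides no detailed proof for this proposition, stating only that it follows ``similar to the proof of Theorem A.5 \cite{D-02}, using $\langle x_{1}-x_{2},\nabla\psi_{\varepsilon}(x_{1})-\nabla\psi_{\varepsilon}(x_{2})\rangle\ge0$ and $\langle y_{1}-y_{2},\nabla\varphi_{\varepsilon}(y_{1})-\nabla\varphi_{\varepsilon}(y_{2})\rangle\ge0$.'' You have fleshed out precisely this sketch, and your closing remark --- that the penalization operators enter only through monotonicity, never through the Lipschitz constant $1/\varepsilon$, so that the constants depend only on $K,k_{2},\gamma,p$ --- is the paper's sole point of emphasis as well.
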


\subsection{Auxiliary Propositions}
In this section, for the reader's convenience, we give two auxiliary propositions (see \cite{PR-11}).
\begin{proposition}[Remark 2.34 \cite{PR-11}]
Let $X_{t}$ be an $n$-dimensional It\^{o} process given by
\[X_{t}=X_{0}+{\displaystyle\int_{0}^{t}}b_{s}ds+{\displaystyle\int_{0}^{t}}\sigma_{s}dB_{s},~t\in[0,T],
\]
where $b,\sigma$ satisfies $E\left({\displaystyle\int_{0}^{T}}|b_{s}|ds+{\displaystyle\int_{0}^{T}}|\sigma_{s}|^{2}ds\right)<\infty$. Assume $h\in C^{1}(\mathbb{R}^{n};\mathbb{R})$ and that there exists a constant $M$ such that
\begin{equation}\label{equation in appendix 4}
|\nabla_{x}h(x+y)-\nabla_{x}h(x)|\leq M|y|, ~\text{for all } x,y\in\mathbb{R}^{n}.
\end{equation}
Then, for all $0\leq s\leq t$, $\mathbb{P}-a.s.$
\[h(X_{t})\leq h(X_{s})+{\displaystyle\int_{s}^{t}}\langle\nabla_{x}h(X_{r}),dX_{r}\rangle+\frac{1}{2}{\displaystyle\int_{s}^{t}}M|\sigma_{r}|^{2}dr.
\]
\end{proposition}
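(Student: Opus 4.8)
The plan is to obtain the inequality from the classical It\^{o} formula by smoothing $h$. Let $(\rho_{\delta})_{\delta>0}$ be a standard mollifier on $\mathbb{R}^{n}$, i.e. $\rho_{\delta}(x)=\delta^{-n}\rho(x/\delta)$ with $\rho\in C_{c}^{\infty}(\mathbb{R}^{n})$, $\rho\ge0$, $\int_{\mathbb{R}^{n}}\rho=1$, and set $h_{\delta}:=h\ast\rho_{\delta}$. Since $h\in C^{1}(\mathbb{R}^{n};\mathbb{R})$, we have $h_{\delta}\in C^{\infty}(\mathbb{R}^{n})$, $\nabla_{x}h_{\delta}=(\nabla_{x}h)\ast\rho_{\delta}$, and $h_{\delta}\to h$, $\nabla_{x}h_{\delta}\to\nabla_{x}h$ uniformly on every compact subset of $\mathbb{R}^{n}$ as $\delta\to0$. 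Moreover, since $\nabla_{x}h$ is $M$-Lipschitz by (\ref{equation in appendix 4}), a direct estimate under the convolution integral shows that $\nabla_{x}h_{\delta}$ is $M$-Lipschitz as well; being of class $C^{1}$, it therefore has Jacobian of operator norm at most $M$ at every point, i.e. the genuine Hessian satisfies $\|D^{2}h_{\delta}(x)\|_{op}\le M$ for all $x$. Consequently, for any $n\times d$ matrix $\tau$,
\[
\mathrm{Tr}\big(D^{2}h_{\delta}(x)\,\tau\tau^{\ast}\big)=\sum_{i=1}^{d}\big\langle D^{2}h_{\delta}(x)\,\tau_{\cdot i},\tau_{\cdot i}\big\rangle\le M\,|\tau|^{2},
\]
where $\tau_{\cdot i}$ denotes the $i$-th column of $\tau$.

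Applying the classical It\^{o} formula to the $C^{2}$ function $h_{\delta}$ and the continuous semimartingale $X$, we get, for fixed $0\le s\le t$, $\mathbb{P}$-a.s.,
\[
h_{\delta}(X_{t})=h_{\delta}(X_{s})+{\displaystyle\int_{s}^{t}}\langle\nabla_{x}h_{\delta}(X_{r}),dX_{r}\rangle+\frac{1}{2}{\displaystyle\int_{s}^{t}}\mathrm{Tr}\big(D^{2}h_{\delta}(X_{r})\sigma_{r}\sigma_{r}^{\ast}\big)\,dr,
\]
and using the trace bound above this yields
\[
h_{\delta}(X_{t})\le h_{\delta}(X_{s})+{\displaystyle\int_{s}^{t}}\langle\nabla_{x}h_{\delta}(X_{r}),dX_{r}\rangle+\frac{1}{2}M{\displaystyle\int_{s}^{t}}|\sigma_{r}|^{2}dr.
\]

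It remains to let $\delta\to0$. Fix $\omega$ outside a $\mathbb{P}$-null set: the path $r\mapsto X_{r}(\omega)$ is continuous on $[0,T]$, hence stays in a compact set $K(\omega)$, on which $h_{\delta}\to h$ and $\nabla_{x}h_{\delta}\to\nabla_{x}h$ uniformly and the $\nabla_{x}h_{\delta}$ are uniformly bounded for $\delta$ small. Hence $h_{\delta}(X_{t})\to h(X_{t})$, $h_{\delta}(X_{s})\to h(X_{s})$, and by dominated convergence (using $\int_{0}^{T}|b_{r}|\,dr<\infty$ a.s.) also ${\displaystyle\int_{s}^{t}}\langle\nabla_{x}h_{\delta}(X_{r}),b_{r}\rangle dr\to{\displaystyle\int_{s}^{t}}\langle\nabla_{x}h(X_{r}),b_{r}\rangle dr$ a.s. For the martingale part, ${\displaystyle\int_{s}^{t}}|\nabla_{x}h_{\delta}(X_{r})-\nabla_{x}h(X_{r})|^{2}|\sigma_{r}|^{2}dr\to0$ a.s. and is dominated by an integrable quantity, so by the standard continuity of the stochastic integral with respect to its integrand, ${\displaystyle\int_{s}^{t}}\langle\nabla_{x}h_{\delta}(X_{r}),\sigma_{r}dB_{r}\rangle\to{\displaystyle\int_{s}^{t}}\langle\nabla_{x}h(X_{r}),\sigma_{r}dB_{r}\rangle$ in probability; passing to a subsequence $\delta_{k}\downarrow0$ along which all three limits hold $\mathbb{P}$-a.s. and taking the limit in the displayed inequality gives the assertion. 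The only point needing care is the estimate $\|D^{2}h_{\delta}\|_{op}\le M$ and its consequence for the trace term, which is precisely where hypothesis (\ref{equation in appendix 4}) is used; the remaining limit arguments are routine applications of dominated convergence, so no real obstacle arises.
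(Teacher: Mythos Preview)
Your mollification argument is correct. The paper does not actually supply its own proof of this proposition; it merely quotes it from \cite{PR-11} (Remark 2.34) as an auxiliary result and moves on. So there is no proof in the paper to compare against, only the citation.

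Your route via $h_{\delta}=h\ast\rho_{\delta}$ is the standard way to establish this kind of one-sided It\^{o} inequality for $C^{1}$ functions with Lipschitz gradient, and it is presumably close to what \cite{PR-11} does. The key step is exactly the one you flag: the $M$-Lipschitz property of $\nabla_{x}h$ transfers to $\nabla_{x}h_{\delta}$ under convolution, so $\|D^{2}h_{\delta}\|_{op}\le M$ and hence $\mathrm{Tr}(D^{2}h_{\delta}\sigma\sigma^{\ast})\le M|\sigma|^{2}$. The limiting argument is routine; your handling of the stochastic integral (a.s.\ convergence of $\int|\nabla h_{\delta}(X_{r})-\nabla h(X_{r})|^{2}|\sigma_{r}|^{2}dr$ via pathwise dominated convergence on the random compact $K(\omega)$, then convergence in probability of the stochastic integral, then a subsequence) is fine. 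One could also localize with stopping times to reduce to bounded integrands, but what you wrote is already sufficient.
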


\begin{remark}\label{remark for extend ito formula}
Similarly, for $r>0$ and $h\in C^{1}(\mathbb{R}^{n};\mathbb{R}_{+})$ satisfying (\ref{equation in appendix 4}), we can prove that
\begin{equation}\label{extend ito formula}
\begin{array}
[c]{ll}
h^{1+2r}(X_{t})\leq h^{1+2r}(X_{s})+(1+2r){\displaystyle\int_{s}^{t}}h^{2r}(X_{r})\langle\nabla_{x}h(X_{r}),dX_{r}\rangle\medskip\\
\quad+(1+2r)r{\displaystyle\int_{s}^{t}}h^{2r-1}(X_{r})
|\nabla_{x}h(X_{r})|^{2}|\sigma_{r}|^{2}dr+\frac{1+2r}{2}{\displaystyle\int_{s}^{t}}h^{2r}(X_{r})M|\sigma_{r}|^{2}dr, \text{ for all }0\leq s \leq t.
\end{array}
\end{equation}
\end{remark}
\begin{proposition}[Proposition 1.25, \cite{PR-11}]\label{proposition of pardoux and rascanu 2011}
Let $(X,K)$, $(X^{n},K^{n})$, $n\ge1$, be a sequence of couples of $C([0,T];\mathbb{R}^{d})$-valued random variables such that
\[
\begin{array}
[c]{l}
(i) \quad \text{There is } ~p>0, \text{ with }\sup\limits_{n\ge1}E\updownarrow K^{n}\updownarrow^{p}_{[0,T]}<\infty,\medskip\\
(ii)\quad \sup\limits_{0\leq t\leq T}|X^{n}_{t}-X_{t}|+\sup\limits_{0\leq t\leq T}|K^{n}_{t}-K_{t}|\xrightarrow[]{\;\mathbb{P}\;}0, ~as~n\rightarrow\infty.
\end{array}
\]
Then, for all $0\leq s\leq t\leq T$,
\[
{\displaystyle\int_{s}^{t}}\langle X_{r}^{n},dK_{r}^{n}\rangle
\xrightarrow[]{\;\mathbb{P}\;}{\displaystyle\int_{s}^{t}}\langle X_{r},dK_{r}\rangle, ~as~n\rightarrow\infty,
\]
and, moreover,
\[
E\updownarrow K\updownarrow^{p}_{[0,T]}\leq \liminf\limits_{n\rightarrow\infty} E\updownarrow K^{n}\updownarrow^{p}_{[0,T]}~.
\]
\end{proposition}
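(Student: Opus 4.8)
The plan is to treat the proposition's two conclusions separately, proving the lower-semicontinuity (variation) bound first and then using it in the proof of the convergence of the Stieltjes integrals.

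First I would show $E\updownarrow K\updownarrow^{p}_{[0,T]}\le\liminf_{n}E\updownarrow K^{n}\updownarrow^{p}_{[0,T]}$. Passing to a subsequence that realises the $\liminf$, and using that (ii) forces $\sup_{0\le t\le T}|K^{n}_{t}-K_{t}|\to0$ in probability, I would extract a further subsequence along which this supremum tends to $0$ almost surely. On the resulting full-measure event, for every partition $0=t_{0}<\dots<t_{l}=T$ one has $\sum_{i}|K_{t_{i}}-K_{t_{i-1}}|=\lim_{k}\sum_{i}|K^{n_{k}}_{t_{i}}-K^{n_{k}}_{t_{i-1}}|\le\liminf_{k}\updownarrow K^{n_{k}}\updownarrow_{[0,T]}$; taking the supremum over partitions gives $\updownarrow K\updownarrow_{[0,T]}\le\liminf_{k}\updownarrow K^{n_{k}}\updownarrow_{[0,T]}$ pathwise, and Fatou's lemma then yields the inequality. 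In particular $E\updownarrow K\updownarrow^{p}_{[0,T]}\le C:=\sup_{n}E\updownarrow K^{n}\updownarrow^{p}_{[0,T]}<\infty$, so $K$ has almost surely finite variation and, by Markov's inequality, $\updownarrow K\updownarrow_{[0,T]}$ and the family $\{\updownarrow K^{n}\updownarrow_{[0,T]}\}_{n}$ are bounded in probability.

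For the convergence of the integrals, fix $0\le s\le t\le T$ and write
\[
\int_{s}^{t}\langle X^{n}_{r},dK^{n}_{r}\rangle-\int_{s}^{t}\langle X_{r},dK_{r}\rangle=\int_{s}^{t}\langle X^{n}_{r}-X_{r},dK^{n}_{r}\rangle+\Bigl(\int_{s}^{t}\langle X_{r},dK^{n}_{r}\rangle-\int_{s}^{t}\langle X_{r},dK_{r}\rangle\Bigr).
\]
The first summand is bounded pathwise by $\sup_{s\le r\le t}|X^{n}_{r}-X_{r}|\cdot\updownarrow K^{n}\updownarrow_{[0,T]}$, a product of a factor tending to $0$ in probability (by (ii)) and a factor bounded in probability, hence it tends to $0$ in probability. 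For the second summand --- the delicate one, since $X$ need not have bounded variation --- I would fix $\varepsilon,\eta>0$, pick $M$ so that $\sup_{n}\mathbb{P}(\updownarrow K^{n}\updownarrow_{[0,T]}>M)$ and $\mathbb{P}(\updownarrow K\updownarrow_{[0,T]}>M)$ are small, and pick a partition $\pi$ of $[s,t]$ with mesh small enough that $\mathbb{P}(\omega_{X}(\mathrm{mesh})>\eta/(3M))$ is small, where $\omega_{X}$ is the (random, a.s.\ finite) modulus of continuity of the continuous path $X$ on $[s,t]$. Replacing $X$ by its left-endpoint step approximation $X^{\pi}$ along $\pi$, the integrals $\int_{s}^{t}\langle X^{\pi}_{r},dK^{n}_{r}\rangle$ and $\int_{s}^{t}\langle X^{\pi}_{r},dK_{r}\rangle$ become finite sums of increments of $K^{n}$ resp.\ $K$, whose difference tends to $0$ in probability by (ii); the two truncation errors are dominated by $\omega_{X}(\mathrm{mesh})\updownarrow K^{n}\updownarrow_{[0,T]}$ and $\omega_{X}(\mathrm{mesh})\updownarrow K\updownarrow_{[0,T]}$. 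A $3\varepsilon$-type estimate gives $\limsup_{n}\mathbb{P}(|\text{second summand}|>\eta)\le\varepsilon$, hence convergence in probability of the second summand, and combining with the first summand completes the argument.

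The main obstacle is precisely this second summand: one cannot integrate by parts or pull $X$ out of the integral because $X$ is only continuous, and the variations $\updownarrow K^{n}\updownarrow_{[0,T]}$ are only $L^{p}$-bounded, not uniformly bounded a.s. The way around it is the step-function approximation of $X$, combined with the tightness of $\{\updownarrow K^{n}\updownarrow_{[0,T]}\}$ and of $\omega_{X}$ obtained from the moment bound (i) and the lower-semicontinuity bound already proved; almost-sure subsequential convergence is invoked only for the genuinely pathwise ingredient, namely the partition lower bound for the total variation.
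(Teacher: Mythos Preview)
Your argument is correct and follows the standard route: lower semicontinuity of total variation via partitions plus Fatou, and convergence of the Stieltjes integrals via the decomposition $\int\langle X^{n}-X,dK^{n}\rangle + \bigl(\int\langle X,dK^{n}\rangle-\int\langle X,dK\rangle\bigr)$, handling the second term by a step-function approximation of the continuous integrand $X$ together with tightness of the variations.

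However, note that the paper does \emph{not} give its own proof of this proposition: it is stated in the appendix (Section~6.2) purely as an auxiliary result quoted verbatim from \cite{PR-11}, with no argument supplied. So there is no ``paper's proof'' to compare against. Your write-up is in fact precisely the kind of proof one finds in the original reference, and nothing is missing.
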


\textbf{Acknowledgement}\quad The author wish to express his thanks
to Rainer Buckdahn and Aurel R\u{a}\c{s}canu for their useful
suggestions and discussions.


\begin{thebibliography}{99}

\bibitem {A-93}\textsc{Antonelli, F.}, \emph{Backward-forward stochastic differential equation}, Ann. Appl. Probab., 3 (1993), pp. 777-793.\vspace{-0.1in}

\bibitem {AR-97}\textsc{Asiminoaei, I., R\u{a}\c{s}canu, A.}, \emph{Approximation and simulation of stochastic variational inequalities-splitting up method}, Numer. Funct. Anal. Optim., 18(3\&4) (1997), pp. 251-282.\vspace{-0.1in}

\bibitem {B-76}\textsc{Barbu, V.}, \emph{Nonlinear semigroups and differential
equations in banach spaces}, Noordhoff International Publishing, Leiden,
1976.\vspace{-0.1in}

\bibitem {BBP-97}\textsc{Barles, R., Buckdahn, R., Pardoux, E.}, \emph{BSDEs and integral-partial differential equations}, Stochastics and Stochastics Rep., 60
(1997),  pp. 57-83.\vspace{-0.1in}

\bibitem {BR-94}\textsc{Bensoussan, A., R\u{a}\c{s}canu, A.}, \emph{Parabolic variational inequalities with random inputs}, in  Les Grands Syst\`{e}mes des Sciences et de la Technologie, book in honor of Prof. Dautray, Masson-Pairs, ERMA 28 (1994), pp. 77-94.\vspace{-0.1in}

\bibitem {BR-97}\textsc{Bensoussan, A., R\u{a}\c{s}canu, A.}, \emph{Stochastic variational inequalities in infinite dimensional spaces}, in Numer. Funct. Anal. Optim., 18(1\&2) (1997), pp. 19-54.\vspace{-0.1in}

\bibitem {CIL-92}\textsc{Crandall, M., Ishii, H., Lions, P.L.}, \emph{User's guide to the viscosity solutions of second order partial differential equations}, Bull. Amer. Math. Soc., 27 (1992), pp. 1-67.\vspace{-0.1in}

\bibitem {CL-83}\textsc{Crandall, M., Lions, P.L.}, \emph{Viscosity solutions of Hamilton-Jacobi equations}, Trans. Amer. Math. Soc., 277 (1983), pp. 1-42.\vspace{-0.1in}

\bibitem {MC-01}\textsc{Cvitani\'{c}, J., Ma, J.}, \emph{Reflected forward-backward SDEs and obstacle problems with boundary conditions}, J. Appl. Math. Stoch. Anal., 14(2) (2001), pp. 113-138.\vspace{-0.1in}

\bibitem {D-02}\textsc{Delarue Fran\c{c}ois, F.}, \emph{On the existence and uniqueness of solutions to FBSDEs in a non-degenerate case}, Stochastic Process. Appl., 99 (2002), pp. 209-286.\vspace{-0.1in}


\bibitem {EKPPQ-97}\textsc{El Karoui, N., Kapoudjian, C., Pardoux, E., Peng, S., Quenez, M.C.}, \emph{Reflected solutions of backward SDE's and related obstacle problems for PDE's}, Ann. Probab., 25 (1997), pp. 702-737.\vspace{-0.1in}

\bibitem {HP-95}\textsc{Hu, Y., Peng, S.}, \emph{Solution of forward-backward stochastic differential equations}, Probab. Theory Related Fields, 103 (1995), pp. 273-283.\vspace{-0.1in}

\bibitem {MPRZ-10}\textsc{Maticiuc, L., Pardoux, E., R\u{a}\c{s}canu, A., Z\v{a}linescu, A.}, \emph{Viscosity solutions for systems of parabolic variational inequalities}, in Bernoulli, 16(1) (2010), pp. 258-273.\vspace{-0.1in}

\bibitem {MPY-94}\textsc{Ma, J., Protter, P., Yong, J.}, \emph{Solving forward-backward stochastic differential equations explicitly-a four step scheme}, Probab. Theory Related Fields, 98 (1994), pp. 339-359.\vspace{-0.1in}

\bibitem {MWZZ-11}\textsc{Ma, J., Wu, Z., Zhang, D., Zhang, J.}, \emph{On wellposedness of forward-backward SDEs-A unified approach}, \href{http://arxiv.org/abs/1110.4658}{arXiv:1110.4658v1 } [math.AP] 27 Sep 2011.\vspace{-0.1in}

\bibitem {MY-95}\textsc{Ma, J., Yong, J.}, \emph{Solving forward-backward SDEs and the nodal set of Hamilton-Jacobi-Bellman equations}, Chin. Ann. Math. Ser. B, 16 (1995), pp. 279-298.\vspace{-0.1in}

\bibitem {PP-90}\textsc{Pardoux, E., Peng, S.}, \emph{Adapted solution of a backward stochastic
differential equation}, Systems Control Lett., 14 (1990), pp. 55-61.\vspace{-0.1in}

\bibitem {PP-92}\textsc{Pardoux, E., Peng, S.}, \emph{Backward stochastic
differential equations and quasilinear parabolic partial differential
equations}, in Stochastic PDE and Their Applications, B.L. Rozovski, R.B.
Sowers eds., LNCIS 176, Springer, 1992, pp. 200-217.\vspace{-0.1in}

\bibitem {PR-98}\textsc{Pardoux, E., R\u{a}\c{s}canu, A.}, \emph{Backward
stochastic differential equations with subdifferential operators and related
variational inequalities}, Stochastic Process. Appl., 76 (1998), pp.
191-215.\vspace{-0.1in}

\bibitem {PR-11}\textsc{Pardoux, E., R\u{a}\c{s}canu, A.}, \emph{Stochastic differential equations, Backward SDEs, Partial differential equations}, Springer, to appear.\vspace{-0.1in}

\bibitem {PT-99}\textsc{Pardoux, E., Tang, S.}, \emph{The study of forward-backward stochastic differential equations and its applications to quaslinear PDEs}, Probab. Theory Related Fields, 114 (1999), pp. 123-150.\vspace{-0.1in}

\bibitem {P-91}\textsc{Peng, S.}, \emph{Probabilistic interpration for systems of quasilinear parabolic partial differential
equations}, Stochastics and Stochastics Rep., 37 (1991), 61-67.\vspace{-0.1in}

\bibitem {P-97}\textsc{Peng, S.}, \emph{BSDE and stochastic optimizations.}, in Topic in Stochastic Analysis. J. Yan, S. Peng, S. Fang, ang L. Wu, Sh.2, Science Press. Beijing (in Chinese) (1997).\vspace{-0.1in}

\bibitem {PW-99}\textsc{Peng, S., Wu, Z.}, \emph{Fully coupled forward-backward stochastic differential equations}, in SIAM J.Control Optim., 37(3) (1999), pp. 825-843.\vspace{-0.1in}

\bibitem {Y-97}\textsc{Yong, J.}, \emph{Finding adapted solutions of forward-backward stochastic differential equations-Method of continuation}, Probab. Theory Related Fields, 107 (1997), pp. 537-572.\vspace{-0.1in}

\bibitem {Z-02}\textsc{Zalinescu, A.}, \emph{Second order Hamilton-Jacobi-Bellman inequalities}, C.R. Math. Acad. Sci. Paris, 335(7) (2002), pp. 591-596.\vspace{-0.1in}

\bibitem {Z-06}\textsc{Zhang, J.}, \emph{The wellposedness of FBSDEs}, Discrete Cont. Dyn. Ser. B, 6(4) (2006), pp. 927-940.\vspace{-0.1in}
\end{thebibliography}
\end{document}